\documentclass [twoside,reqno, 12pt] {amsart}



\usepackage{hyperref}

\usepackage{amsfonts}
\usepackage{amssymb}
\usepackage{a4}
\usepackage{color}

\newtheorem{thm}{Theorem}[section]
\newtheorem{cor}[thm]{Corollary}
\newtheorem{lem}[thm]{Lemma}
\newtheorem{prop}[thm]{Proposition}

\newtheorem{defn}[thm]{Definition}
\newtheorem{rem}[thm]{Remark}

\theoremstyle{definition}

\numberwithin{equation}{section}


\newcommand{\C}{\mathbb{C}}

\renewcommand{\div}{\operatorname{div}}

\newcommand{\N}{\mathbb{N}}

\newcommand{\R}{\mathbb{R}}

\newcommand{\supp}{\operatorname{supp}}

\parindent0pt
\parskip6pt

\def\hat{\widehat}
\def\tilde{\widetilde}
\def \bfo {\begin {eqnarray*} }
\def \efo {\end {eqnarray*} }
\def \ba {\begin {eqnarray*} }
\def \ea {\end {eqnarray*} }
\def \beq {\begin {eqnarray}}
\def \eeq {\end {eqnarray}}
\def \supp {\hbox{supp }}

\def \det {\hbox{det}}

\def \e {\varepsilon}
\def \p {\partial}

\def\hat{\widehat}
\def\tilde{\widetilde}
\def \bfo {\begin {eqnarray*} }
\def \efo {\end {eqnarray*} }
\def \ba {\begin {eqnarray*} }
\def \ea {\end {eqnarray*} }
\def \beq {\begin {eqnarray}}
\def \eeq {\end {eqnarray}}
\def \supp {\hbox{supp }}

\def \det {\hbox{det}}

\def \e {\varepsilon}
\def \p {\partial}


\begin{document}

 \title[Inverse problems for nonlinear magnetic Schr\"odinger equations ]{Inverse problems for nonlinear magnetic Schr\"odinger equations on conformally transversally anisotropic manifolds}

\author[Krupchyk]{Katya Krupchyk}

\address
        {K. Krupchyk, Department of Mathematics\\
University of California, Irvine\\
CA 92697-3875, USA }

\email{katya.krupchyk@uci.edu}

\author[Uhlmann]{Gunther Uhlmann}

\address
       {G. Uhlmann, Department of Mathematics\\
       University of Washington\\
       Seattle, WA  98195-4350\\
       USA\\
        and Institute for Advanced Study of the Hong Kong University of Science and Technology}
\email{gunther@math.washington.edu}

\maketitle

\begin{abstract}

We study the inverse boundary problem for a nonlinear magnetic Schr\"odinger operator on a conformally transversally anisotropic Riemannian manifold of dimension $n\ge 3$. Under suitable assumptions on the nonlinearity,  we show that the knowledge of the Dirichlet-to-Neumann map on the boundary of the manifold determines the nonlinear magnetic and electric potentials uniquely. No assumptions on the transversal manifold are made in this result, whereas the corresponding inverse boundary problem for the linear magnetic Schr\"odinger operator is still open in this generality.

\end{abstract}

\section{Introduction and statement of results}

Let $(M,g)$ be a smooth compact oriented Riemannian manifold of dimension $n\ge 3$ with smooth boundary.  Let $A\in C^\infty(M, T^*M)$ be a $1$-form with complex valued $C^\infty$ coefficients, and let 
\[
d_A=d+i A: C^\infty(M)\to C^\infty(M, T^*M),
\]
where $d: C^\infty(M)\to C^\infty(M, T^*M)$ is the de Rham differential. The formal $L^2$--adjoint $d_A^*: C^\infty(M, T^*M)\to C^\infty(M)$ of $d_A$ is defined by
\[
(d_A u, v)_{L^2(M,T^*M)}=(u,d^*_Av)_{L^2(M)}, \quad u\in C_0^\infty(M^\text{int}), \quad v\in C_0^\infty(M^{\text{int}}, T^*M^{\text{int}}),
\] 
where $M^{\text{int}}=M\setminus\p M$ stands for the interior of $M$. Here and in what follows, when $u,v\in C^\infty(M)$, we write 
\[
(u,v)_{L^2(M)}=\int_M u\overline{v}dV_g,
\] 
for the natural $L^2$--scalar product, where $dV_g$ is the Riemannian volume element on $M$.   Similarly, when $\alpha,\beta\in C^\infty(M, T^*M)$ are $1$-forms, we define the $L^2$--scalar product,
\[
( \alpha, \beta)_{L^2(M, T^*M)}=\int_M  \langle \alpha, \overline{\beta}\rangle_g dV_g(x),
\]
where $\langle\cdot, \cdot\rangle_g$ is the pointwise scalar product in the space of $1$--forms induced by the Riemannian metric $g$. In local coordinates $(x_1,\dots, x_n)$ in which $\alpha=\sum_{j=1}^n\alpha_j dx_j$, $\beta=\sum_{j=1}^n\beta_j dx_j$ and $(g^{jk})$ is the matrix inverse of 
$(g_{jk})$, $g=\sum_{j,k=1}^n g_{jk}dx_jdx_k$, we have
\[
\langle \alpha, \beta\rangle_g=\sum_{j,k=1}^n g^{jk}\alpha_j \beta_k. 
\]
We also have
\[
d_A^*=d^*-i \langle\overline{A}, \cdot\rangle_g. 
\]
In local coordinates,   we see that 
\begin{equation}
\label{eq_int_2}
d^*v=-\sum_{j,k=1}^n|g|^{-\frac{1}{2}}\p_{x_j}(|g|^{\frac{1}{2}}g^{jk}v_k),
\end{equation}
where $|g|=\det(g_{jk})$ and $v=\sum_{j=1}^n v_jdx_j$.

In this paper we shall consider $1$-forms and scalar functions depending holomorphically on a parameter $z\in \C$. Specifically, let $A: M\times \C\to T^*M$ and  $V: M\times \C\mapsto \C$ satisfy the conditions: 
\begin{itemize}
\item[($A_i$)] the map $\C\ni z\mapsto A(\cdot, z)$ is holomorphic with values in $C^{1,1}(M,T^*M)$, 
the space of $1$-forms with complex valued $C^{1,1}(M)$ coefficients, 
\item[($V_{i}$)] the map  $\C\ni z\mapsto V(\cdot, z)$ is holomorphic with values in $C^{1,1}(M)$, 
\item[($V_{ii}$)] $V(x,0)=0, \text{ for all } x\in M$. 
\end{itemize}
Here $C^{1,1}(M)$ is the space of $C^1$ functions on $M$ with a Lipschitz gradient.

It follows from ($A_i$), ($V_i$), ($V_{ii}$)  that $A$ and $V$ can be expanded into  power series 
\begin{equation}
\label{eq_int_2_A}
A(x,z)=\sum_{k=0}^\infty A_k(x)\frac{z^k}{k!}, \quad A_k(x):=\p_z^k A(x,0)\in C^{1,1}(M,T^*M),
\end{equation}
converging in the $C^{1,1}(M,T^*M)$ topology, and 
\begin{equation}
\label{eq_int_2_V}
V(x,z)=\sum_{k=1}^\infty V_k(x)\frac{z^k}{k!}, \quad V_k(x):=\p_z^k V(x,0)\in C^{1,1}(M),
\end{equation}
converging in the $C^{1,1}(M)$ topology. 

Let us introduce the nonlinear magnetic Schr\"odinger operator defined by 
\begin{equation}
\label{eq_int_3_L_AV}
\begin{aligned}
L_{A,V}u&=d^*_{\overline{A(\cdot, u)}}d_{A(\cdot, u)}u+V(\cdot, u)\\
=-\Delta_gu&+d^*(i A(\cdot, u)u)-i\langle A(\cdot, u), du\rangle_g+ \langle A(\cdot, u), A(\cdot, u)\rangle_g u+V(\cdot, u),
\end{aligned}
\end{equation} 
for $u\in C^\infty(M)$.  Notice that the first order linearization of $L_{A,V}$ is the standard linear magnetic Schr\"odinger operator $d^*_{\overline{A_0}}d_{A_0}+V_1$. 

Furthermore, we also assume that $A_0\in C^\infty(M,T^*M)$, $V_1\in C^\infty(M)$, and that 
\begin{itemize}
\item[(i)] $0$ is not a Dirichlet eigenvalue of the operator $d^*_{\overline{A_0}}d_{A_0}+V_1$.
\end{itemize}

Consider the Dirichlet problem for the nonlinear magnetic Schr\"odinger operator, 
\begin{equation}
\label{eq_int_3}
\begin{cases} L_{A,V}u=0 & \text{in}\quad M^{\text{int}}, \\
u|_{\p M}=f.
\end{cases}
\end{equation}
It is shown in Theorem \ref{thm_well-posedness} that under the above assumptions, there exist $\delta>0$ and $C>0$ such that when $f\in B_\delta(\p M):=\{f\in C^{2,\alpha}(\p M): \|f\|_{C^{2,\alpha}(\p M)}<\delta\}$, $0<\alpha<1$, the problem \eqref{eq_int_3} has a unique solution $u=u_f\in C^{2,\alpha}(M)$ satisfying 
$\|u\|_{C^{2,\alpha}(M)}<C\delta$. Here $C^{2,\alpha}(M)$ stands for the standard H\"older space of functions on $M$.  Associated to the problem \eqref{eq_int_3}, we define the Dirichlet--to--Neumann map 
\begin{equation}
\label{eq_int_3_DN}
\Lambda_{A,V}f=\p_\nu u_f|_{\p M},
\end{equation}
where $f\in B_\delta(\p M)$ and $\nu$ is the unit outer normal to the boundary.

The inverse problem that we are interested in is whether the knowledge of the Dirichlet--to--Neumann map $\Lambda_{A,V}$ determines the nonlinear magnetic and electric potentials, $A$ and $V$, respectively. 

When $A=0$ and $V(x,z)=V_1(x)z$, the inverse problem for the linear Sch\"odinger operator $-\Delta_g+V_1$ is related to the Calder\'on problem, which has been the object of intense studies but remains open in the case of a general smooth Riemannian manifold $(M,g)$ of dimension $n\ge 3$ with smooth boundary.  Let us mention that the unique determination of the potential $V_1$ from the knowledge of the Dirichlet--to--Neumann map $\Lambda_{0, V_1}$ was established in \cite{Sylvester_Uhlmann_1987} in the Euclidean setting, in \cite{Isozaki_2004} for hyperbolic manifolds, and in \cite{Lee_Uhlmann}, \cite{Lassas_Uhlmann}, \cite{Kohn_Vogelius_1984} in the analytic case. The uniqueness in the inverse boundary problem for the linear magnetic Schr\"odinger operator $d^*_{\overline{A_0}}d_{A_0}+V_1$ up to a suitable gauge transformation was obtained in \cite{Nakamura_Sun_Uhlmann} in the Euclidean setting, see also \cite{Krup_Uhlmann_2014}. Going beyond these settings, the most general uniqueness results were obtained in the case when the manifold $(M,g)$ is conformally transversally anisotropic and the transversal manifold satisfies some additional assumptions. Following \cite{DKSaloU_2009} and \cite{DKurylevLS_2016}, let us recall the definition of a conformally transversally anisotropic manifold. 

\begin{defn}
A compact smooth oriented Riemannian manifold $(M,g)$ of dimension $n\ge 3$ with smooth boundary is said to be conformally transversally anisotropic  if  there exists an $(n-1)$--dimensional smooth compact Riemannian manifold $(M_0,g_0)$ with smooth boundary such that $M\subset\subset  \R\times M_0$  and $g=c(e\oplus g_0)$ where $e$ is the Euclidean metric on $\R$ and $c$ is a positive smooth function on $M$. 

\end{defn}

In the case when $(M,g)$ is conformally transversally anisotropic, assuming that the transversal manifold $(M_0,g_0)$ is simple in the sense that the boundary $\p M_0$ is strictly convex and for any point $p\in M_0$, the exponential map $\exp_p$ with its maximal domain of definition in $T_pM_0$ is a diffeomorphism onto $M_0$, the global uniqueness for the inverse boundary problem for the linear magnetic Schr\"odinger equation up to a gauge was proven in \cite{DKSaloU_2009}, see also \cite{Krup_Uhlmann_magn_2018}.  Note that the geodesic ray transform on functions and $1$-forms is invertible on simple manifolds, see  \cite{Anikonov_78}, \cite{Muhometov}. 

These uniqueness results were strengthened in \cite{DKurylevLS_2016}, where the global uniqueness in the inverse boundary problem for the linear Schr\"odinger  equation was established under the assumption that the geodesic ray transform on the transversal manifold is injective. Similar results for the inverse boundary problem for the linear magnetic Schr\"odinger equation were obtained in \cite{Cekic},  \cite{Krup_Uhlmann_magn_2018}. The injectivity of the geodesic ray transform is open in general, and has only been established under certain geometric assumptions. In particular, the injectivity of the geodesic ray transform is proven in \cite{Stefanov_Uhlmann_Vasy}, \cite{Uhlmann_Vasy_2016} when $M_0$ has strictly convex boundary and is foliated by strictly convex hypersurfaces, and in \cite{Guillarmou_2017}, \cite{Guill_Mazz_Tzou} when $M_0$ has a hyperbolic trapped set and no conjugate points. As an example of the latter, one can consider a negatively curved manifold $M_0$. We refer to \cite{DKuLLS_2018} where the
linearized anisotropic Calder\'on problem was studied on a transversally anisotropic manifold under certain mild conditions on the transversal manifold related to the geometry of pairs of intersecting geodesics.

Turning the attention to inverse  problems for nonlinear PDE, it was discovered in \cite{Kurylev_Lassas_Uhlmann_2018} that nonlinearity can be helpful in solving inverse problems for hyperbolic equations, see also \cite{Lassas_Uhlmann_Wang}, \cite{Feizmohammadi_Lassas_Oksanen}, and the references given there. Similar phenomena for inverse problems for semilinear elliptic PDE have been revealed in  \cite{Feizmohammadi_Oksanen}, \cite{LLLS}, see also \cite{LLLS_partial}, \cite{Krup_Uhlmann_non_linear_1}, \cite{Krup_Uhlmann_2}, \cite{Lai_Ting}.  A common feature of all of the aforementioned works is that the presence of a nonlinearity allows one to solve inverse problems for nonlinear equations in cases where the corresponding inverse problem in the linear setting is open.

In particular, the inverse boundary problem for the nonlinear Schr\"odinger equation $L_{0, V}u=-\Delta_gu+ V(\cdot, u)=0$ on  a conformally transversally anisotropic manifold $(M,g)$ of dimension $n\ge 3$ was studied in \cite{Feizmohammadi_Oksanen}, \cite{LLLS}, and the following result was obtained: if $V$ satisfies the assumptions ($V_i$),  ($V_{ii}$), and 
\begin{itemize}
\item[($V_{iii}$)] $\p_z V(x,0)=\p_z^2 V(x,0)=0, \text{ for all } x\in M$,
\end{itemize}
then the knowledge of the Dirichlet--to--Neumann map $\Lambda_{0,V}$ determines  $V$ in $M\times \C$ uniquely. Notice that remarkably there are no assumptions on the transversal manifold in this result while the inverse problem for the linear Schr\"odinger equation is still open in this generality. The proof of this result relies on higher order linearizations of the Dirichlet--to--Neumann map, which allow one to reduce the inverse problem to the following density result, see \cite{LLLS},

\begin{prop}
\label{prop_density_potential}
Let $(M,g)$ be a conformally transversally anisotropic manifold of dimension $n\ge 3$, and  let $q\in C^{1,1}(M)$. If 
\begin{equation}
\label{eq_2_1_potential}
\int_M q u_1u_2u_3 u_4 dV_g=0,
\end{equation}
for all harmonic functions $u_j\in C^\infty(M)$, $j=1,2,3,4$,  then $q\equiv 0$. 
\end{prop}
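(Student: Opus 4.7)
The plan is to construct four families of harmonic functions on $(M,g)$ as complex geometric optics (CGO) solutions with Gaussian beam amplitudes on the transversal manifold $(M_0, g_0)$, and to exploit the extra flexibility afforded by having four factors rather than two (as in the linear Calder\'on problem) to localize the product on a single interior point of $M_0$, thereby sidestepping any injectivity hypothesis for the geodesic ray transform on $M_0$.

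First I would use the conformal product structure. Writing $g = c(e \oplus g_0)$ and $\tilde g = e \oplus g_0$, the conformal transformation law for the Laplacian shows that $u$ is $g$-harmonic if and only if $v = c^{(n-2)/4} u$ satisfies $(-\Delta_{\tilde g} + q_c)v = 0$ on $M$ for an explicit smooth potential $q_c$. Under this correspondence the hypothesis \eqref{eq_2_1_potential} becomes $\int_M \tilde q\, v_1 v_2 v_3 v_4\, dV_{\tilde g} = 0$ for all such $v_j$, where $\tilde q$ is a non-vanishing smooth multiple of $q$; it thus suffices to show $\tilde q \equiv 0$.

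On the product $(\R \times M_0, \tilde g)$ the function $x_1$ is a limiting Carleman weight, and the standard CGO scheme on conformally transversally anisotropic manifolds provides, for each non-tangential unit speed geodesic $\gamma$ in $(M_0, g_0)$ and each $\tau \in \R$, a solution of the form
\[
v = e^{-sx_1 + i\tau x_1}\bigl(w_s(x') + r_s(x)\bigr),
\]
where $w_s$ is a Gaussian beam quasi-mode of $-\Delta_{g_0} + q_c$ concentrated in an $s^{-1/2}$-tube around $\gamma$, and $r_s$ is a Carleman-estimate remainder with $\|r_s\|_{L^2} = o(1)$. I would take $v_1, v_2$ of this form with beams along a first geodesic $\gamma$ and parameters $\tau_1, \tau_2$, and $v_3, v_4$ of the sign-flipped form $e^{sx_1 + i\tau x_1}(\cdot)$ with beams along a second geodesic $\eta$ and parameters $\tau_3, \tau_4$. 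The $e^{\pm sx_1}$ factors then cancel in the product, leaving the oscillatory factor $e^{i(\tau_1+\tau_2+\tau_3+\tau_4)x_1}$ times the four Gaussian-beam amplitudes.

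Sending $s\to\infty$, a stationary-phase / concentration analysis for the product of Gaussian beams supported on two transversally intersecting geodesics shows that $w_1 w_2 w_3 w_4$ localizes at the intersection point $\gamma \cap \eta$, which can be arranged to be a single interior point $p_0 \in M_0$. The limit identity thus reduces to $\int \tilde q(x_1, p_0)\, e^{i(\tau_1+\tau_2+\tau_3+\tau_4)x_1}\, dx_1 = 0$ for all choices of real $\tau_j$, so Fourier inversion in $x_1$ yields $\tilde q(\cdot, p_0) \equiv 0$; varying $\gamma$ and $\eta$ so that $p_0$ sweeps a dense open subset of $M_0$ gives $\tilde q \equiv 0$ by continuity. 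The main technical obstacle is the concentration analysis of the product of four Gaussian beams along two transversally intersecting geodesics: one must show that the leading-order amplitudes combine to a non-degenerate profile around $p_0$ sharp enough to read off the pointwise value of $\tilde q$ there, and that the many cross terms involving one or more remainders $r_j$ contribute negligibly in the limit. Once this is established, the remaining argument is essentially one-dimensional Fourier inversion in the Euclidean variable.
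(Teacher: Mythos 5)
The paper does not actually reprove this proposition; it cites it from \cite{LLLS}. What the paper does prove with this technique is the analogous $1$-form statement, Proposition~\ref{prop_density_form}, in Sections~\ref{sec_simplified_setting} and \ref{sec_general_setting}, and that proof makes the mechanism explicit, so it is the right thing to compare your attempt against.

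Your overall framework is the right one (CGO solutions built from Gaussian beams on $M_0$, two intersecting non-tangential geodesics, concentration at the intersection point, then Fourier inversion in $x_1$), but there is a genuine gap in the central step. You take $v_1,v_2$ both of the form $e^{-sx_1+i\tau_j x_1}(w_s^{\gamma}+r)$ with the \emph{same} Gaussian beam $w_s^{\gamma}\sim s^{(n-2)/8}e^{is\varphi_\gamma}a$, and $v_3,v_4$ both of the form $e^{sx_1+i\tau_j x_1}(w_s^{\eta}+r)$ with the same beam $w_s^{\eta}\sim s^{(n-2)/8}e^{is\varphi_\eta}b$. After the $e^{\pm sx_1}$ factors cancel, the leading term of the product is $e^{i(\tau_1+\cdots+\tau_4)x_1}\,(w_s^{\gamma})^2(w_s^{\eta})^2$, and this is \emph{not} free of $s$-oscillations: the Gaussian beams carry complex phases whose real parts are the arclength parameters along the geodesics, so $(w_s^{\gamma})^2(w_s^{\eta})^2$ contains the factor $e^{2is\,\mathrm{Re}(\varphi_\gamma+\varphi_\eta)}$. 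At the intersection point $p_0$ one has $\nabla\,\mathrm{Re}\,\varphi_\gamma(p_0)=\dot\gamma$ and $\nabla\,\mathrm{Re}\,\varphi_\eta(p_0)=\dot\eta$, so the gradient of the real phase is $2(\dot\gamma+\dot\eta)\neq 0$ for two transversally intersecting unit vectors. Your stationary-phase/Laplace argument therefore faces a phase whose imaginary part has a nondegenerate minimum at $p_0$ but whose real part is non-stationary there; integration by parts in the geodesic direction makes the leading term vanish, and you do not recover the pointwise value $q(p_0)$.

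The resolution used in \cite{LLLS} and in this paper's proof of Proposition~\ref{prop_density_form} is to pair each Gaussian beam with its complex conjugate: take $u_1,u_2$ built from $v$ and $\overline{v}$ along $\eta$, and $u_3,u_4$ from $w$ and $\overline{w}$ along $\gamma$, as in \eqref{eq_4_1}. Then the leading term of $u_1u_2u_3u_4$ is $e^{-2i\lambda x_1}|v|^2|w|^2$, with $|v|^2|w|^2 = s^{(n-2)/2}e^{-2s(\mathrm{Im}\,\varphi+\mathrm{Im}\,\psi)}e^{-2\lambda\,\mathrm{Re}\,\varphi}|a|^2|b|^2 + \cdots$: the fast phases cancel identically, the product is a genuine (non-oscillating) concentration profile at $p_0$, and the rough stationary phase of Lemma~\ref{lem_stationary_phase} reads off the pointwise value of the Fourier transform of the (extended) potential at $p_0$. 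You should also note a secondary omission: to pass from $\int_M$ to $\int_{\R\times M_0}$ and take the Fourier transform in $x_1$ over all of $\R$, one must first establish $q|_{\partial M}=0$ (and $\partial_\nu q|_{\partial M}=0$ under $C^{1,1}$ regularity) by a boundary determination argument of the type in Appendix~\ref{app_boundary_determination}, so that the extension by zero to $(\R\times M_0)\setminus M$ keeps the regularity needed for the stationary and non-stationary phase steps.
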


The purpose of this paper is to extend the aforementioned result of \cite{Feizmohammadi_Oksanen}, \cite{LLLS} to the nonlinear magnetic Schr\"odinger equation $L_{A,V}u=0$ given by \eqref{eq_int_3_L_AV}. To state our result, similarly to the assumption ($V_{iii}$) on the potential $V$, we shall also suppose that the nonlinear magnetic potential $A$ satisfies, 

\begin{itemize}
\item[($A_{ii}$)] $A(x,0)=\p_z A(x,0)=0, \text{ for all } x\in M$. 
\end{itemize}

Our main result is as follows. 
\begin{thm}
\label{thm_main} Let  $(M,g)$ be a conformally transversally anisotropic manifold of dimension $n\ge 3$. Let $A^{(1)}, A^{(2)} : M\times \C\to T^*M$ and $V^{(1)}, V^{(2)}: M\times \C\mapsto \C$  satisfy the assumptions $(A_i)$, $(A_{ii})$  and $(V_i)$, $(V_{ii})$, $(V_{iii})$,  respectively.  
If $\Lambda_{A^{(1)},V^{(1)}}=\Lambda_{A^{(2)},V^{(2)}}$ then $A^{(1)}=A^{(2)}$ and $V^{(1)}=V^{(2)}$ in $M\times \C$. 
\end{thm}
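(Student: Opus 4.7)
The proof follows the higher-order linearization paradigm of \cite{Feizmohammadi_Oksanen, LLLS}, adapted to the presence of the nonlinear magnetic potential $A$. Assume $\Lambda_{A^{(1)},V^{(1)}}=\Lambda_{A^{(2)},V^{(2)}}$. For boundary data $f_\epsilon = \sum_{j=1}^N \epsilon_j f_j$ with $\epsilon\in\C^N$ small, the well-posedness from Theorem \ref{thm_well-posedness} lets us expand $u_{f_\epsilon} = \sum_j \epsilon_j v_j + \tfrac{1}{6}\sum_{ijk}\epsilon_i\epsilon_j\epsilon_k w_{ijk} + O(|\epsilon|^4)$. The assumptions $(A_{ii})$ and $(V_{iii})$ ensure that the $v_j$ are harmonic and that the $O(|\epsilon|^2)$ coefficient vanishes, so the first non-trivial contribution to the DN map comes from the third $\epsilon$-derivative. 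Differentiating $L_{A,V}(u_{f_\epsilon}) = 0$ three times at $\epsilon=0$ yields, for $w_{ijk}$ with vanishing boundary trace,
\begin{equation*}
-\Delta_g w_{ijk} + 3i\,d^*(A_2\,v_i v_j v_k) - i\bigl\langle A_2,\, v_i v_j\,dv_k + v_i v_k\,dv_j + v_j v_k\,dv_i\bigr\rangle_g + V_3\,v_i v_j v_k = 0.
\end{equation*}

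Pairing with a fourth harmonic function $v_4$ and using Green's identity together with $d^*(f\alpha) = f\,d^*\alpha - \langle df,\alpha\rangle_g$, subtracting the identities for $(A^{(1)},V^{(1)})$ and $(A^{(2)},V^{(2)})$ and writing $\alpha := A_2^{(1)} - A_2^{(2)}$, $q := V_3^{(1)} - V_3^{(2)}$, produces
\begin{equation*}
\int_M(q - i\,d^*\alpha)\,v_1 v_2 v_3 v_4\,dV_g + 4i\int_M v_1 v_2 v_3 \langle\alpha,\, dv_4\rangle_g\,dV_g = 4i\int_{\partial M}\alpha(\nu)\,v_1 v_2 v_3 v_4\,dS,
\end{equation*}
for all harmonic $v_1,\dots,v_4$. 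The trilinear form being symmetric in $(v_1,v_2,v_3)$, summing over the four placements of the ``test'' function and using $\sum_{k=1}^4(\prod_{j\neq k}v_j)\,dv_k = d(v_1 v_2 v_3 v_4)$ collapses the gradient term; one further integration by parts gives
\begin{equation*}
\int_M q\,v_1 v_2 v_3 v_4\,dV_g = 3i\int_{\partial M}\alpha(\nu)\,v_1 v_2 v_3 v_4\,dS.
\end{equation*}
A boundary-layer analysis (taking $f_j$'s concentrated near a boundary point and comparing the distinct bulk and surface scalings) forces $\alpha(\nu)|_{\partial M}\equiv 0$, and then Proposition \ref{prop_density_potential} yields $q \equiv 0$ in $M$.

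It then remains to deduce $\alpha \equiv 0$ from the residual identity $\int_M(d^*\alpha)\,v_1 v_2 v_3 v_4\,dV_g = 4\int_M v_1 v_2 v_3 \langle\alpha,\, dv_4\rangle_g\,dV_g$, valid for all harmonic $v_1,\dots,v_4$. This is the main obstacle: naively setting some $v_j$ to constants would reduce matters to density of $2$- or $3$-fold products of harmonic functions, which is not available on a general CTA manifold (this is precisely why the linear Calder\'on problem is open in this generality). The plan is to use the extra flexibility of four harmonic factors by constructing, on $(M,g)$, a pair of Gaussian beam quasimodes $v_1, v_2$ concentrating on an arbitrary transversal geodesic of $M_0$, together with complementary harmonic complex geometric optics solutions $v_3, v_4$ in the Euclidean direction in the spirit of \cite{DKSaloU_2009, Krup_Uhlmann_non_linear_1}; in the semiclassical limit the gradient $dv_4$ produces an extra factor in the large parameter that selects the magnetic term and encodes $\alpha$ along the geodesic as a Fourier-type transform which can be inverted without any further hypothesis on $M_0$. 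Once $A_2^{(1)} = A_2^{(2)}$ and $V_3^{(1)} = V_3^{(2)}$ are established, induction on the order of linearization gives, for each $m\geq 3$, an analogous identity for $A_{m-1}^{(1)} - A_{m-1}^{(2)}$ and $V_m^{(1)} - V_m^{(2)}$ (with the already-matched lower-order coefficients cancelling), and the holomorphicity of $A(x,\cdot)$ and $V(x,\cdot)$ promotes equality of all Taylor coefficients at $z=0$ to $A^{(1)} \equiv A^{(2)}$ and $V^{(1)} \equiv V^{(2)}$ on $M\times\C$.
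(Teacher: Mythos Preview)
Your linearization and the symmetrization trick that isolates $\int_M q\,v_1v_2v_3v_4$ (up to a boundary term) are fine, and the induction scheme is the right scaffolding. The genuine gap is in the recovery of the magnetic part $\alpha=A_2^{(1)}-A_2^{(2)}$.

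First, your boundary determination is too weak. The scaling heuristic you sketch yields at best $\alpha(\nu)|_{\partial M}=0$. To carry out the stationary/non-stationary phase analysis that actually pins down $\alpha$, one needs the \emph{full} boundary trace $\alpha|_{\partial M}=0$ \emph{and} $\partial_\nu\alpha|_{\partial M}=0$, so that the zero extension of $\alpha$ to $\R\times M_0$ remains $C^{1,1}$. In the paper this is obtained (Proposition~\ref{prop_boundary_A_V}) by permuting the roles of the four harmonic functions in the integral identity, setting two of them equal to $1$, and reducing to a two-function identity $\int_M\langle \alpha, du_1\rangle_g u_2\,dV_g=0$, to which a careful boundary-layer construction applies.

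Second, and more seriously, your proposed scheme for recovering $\alpha$ from the residual identity does not work on a general CTA manifold. If $v_1,v_2$ are Gaussian beams along a transversal geodesic $\gamma$ and $v_3,v_4$ are plane-wave type CGOs in the $x_1$-direction, then $dv_4$ points essentially in the $x_1$-direction, so $\langle\alpha,dv_4\rangle$ sees only the component $\alpha_1$; the product $v_1v_2v_3\langle\alpha,dv_4\rangle$ concentrates on the whole tube $\R\times\gamma$, and the limit extracts an integral of (a Fourier coefficient of) $\alpha_1$ along $\gamma$. That is a geodesic ray transform on $M_0$, whose injectivity is \emph{precisely} the hypothesis you are trying to avoid, and in any case the tangential components $\alpha'$ are untouched. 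The paper circumvents this by a different construction (Proposition~\ref{prop_density_form} and Corollary~\ref{cor_density_form}): all four harmonic functions are Gaussian-beam based, associated to \emph{two} intersecting non-tangential geodesics $\gamma$ and $\eta$ (two beams per geodesic, paired with their conjugates). The relevant products then concentrate at the finitely many \emph{points} of intersection of $\gamma$ and $\eta$, and a rough stationary phase reads off $\alpha(\dot\gamma)$ pointwise at those points; varying the direction of $\gamma$ near $y_0$ recovers the full $1$-form. Operationally, the paper uses the identity \eqref{eq_6_6} directly and exploits that, for these specific CGOs, the $A$-term carries an extra factor of $s$ relative to the $V$-term (see \eqref{eq_5_15_2}), so one determines $A$ \emph{first} and only afterwards applies Proposition~\ref{prop_density_potential} to get $V$. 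Your order (first $V$, then $A$) is not wrong in itself, but the remaining identity for $\alpha$ still requires the two-geodesic concentration argument, not the Euclidean-CGO substitute you propose.
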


\begin{rem} Let us point out that there are no assumptions on the transversal manifold in Theorem \ref{thm_main}, whereas the corresponding inverse boundary problem for the linear magnetic Schr\"odinger operator is still open in this generality.
\end{rem}

\begin{rem}
Notice that as opposed to the inverse boundary problem for the linear magnetic Schr\"odinger equation, where one can determine the magnetic potential up to a gauge transformation only, in our nonlinear setting the unique determination of both potentials is possible, due to the assumptions $(A_i)$, $(A_{ii})$  and $(V_i)$, $(V_{ii})$, $(V_{iii})$, which imply that the first order linearization of the nonlinear equation is given by $-\Delta_g u=0$, rather than by the linear magnetic Schr\"odinger equation. 
\end{rem}

Similarly to  \cite{Feizmohammadi_Oksanen}, \cite{LLLS}, the proof of Theorem \ref{thm_main} relies on higher order linearizations of the Dirichlet--to--Neumann map $\Lambda_{A,V}$, as well as a suitable consequence of the following density result, which may be of some independent interest. 
\begin{prop}
\label{prop_density_form}
Let $(M,g)$ be a conformally transversally anisotropic manifold of dimension $n\ge 3$, and let $A\in C^{1,1}(M, T^*M)$ be a $1$-form. If 
\begin{equation}
\label{eq_2_1}
\int_M\langle A, d(u_1u_2u_3)\rangle_g u_4 dV_g=0,
\end{equation}
for all harmonic functions $u_j\in C^\infty(M)$, $j=1,2,3,4$,  then $A\equiv 0$. 
\end{prop}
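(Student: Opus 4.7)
The plan is to adapt the CGO-based proof of the scalar density result Proposition~\ref{prop_density_potential} from~\cite{LLLS} to handle the extra derivative in $d(u_1u_2u_3)$. The key observation is that, when $u_j$ is a complex geometric optics harmonic function, the derivative $du_j$ brings down a factor of the large CGO parameter in two competing ways: as $s_j\,dx_1$ (from differentiating the Carleman exponential $e^{s_j x_1}$) and as $i\tau\, d\Phi_j$ (from differentiating the Gaussian-beam phase $e^{i\tau\Phi_j}$ tangent to the underlying transversal geodesic). After dividing by $\tau$ and sending $\tau \to \infty$, one extracts a linear combination of the $x_1$-component and certain transversal components of $A$, which can be inverted by varying the CGO parameters.

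Concretely, after the standard conformal reduction $u = c^{-(n-2)/4}v$ turning $-\Delta_g u = 0$ into a Schr\"odinger equation on the product $(\R\times M_0,\, \hat g = e\oplus g_0)$, I would take four CGO harmonic functions
\[
u_j = e^{s_j x_1}(v_j + r_j),\qquad j = 1,2,3,4,
\]
with $s_j = \mu_j\tau + i\lambda_j$, $\mu_j\in\{-1,+1\}$ and $\sum_j\mu_j = 0$ so that $\sum_j s_j = i\lambda$ for a free real parameter $\lambda$, and $v_j(x') = e^{i\tau\Phi_j(x')}a_j(x')$ Gaussian-beam quasimodes on $M_0$ concentrated on two transversal geodesics $\gamma_1,\gamma_2$ meeting at a point $p_0\in M_0^{\text{int}}$, with phases paired conjugately on each $\gamma_k$ so that $\Phi_1+\Phi_2+\Phi_3+\Phi_4\equiv 0$ near $p_0$. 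Substituting into the identity, expanding by Leibniz and using $du_j = s_j u_j\,dx_1 + i\tau u_j\, d\Phi_j + \text{(l.o.t.)}$, the leading-order $\tau$-terms collect into an expression of the form $\tau\,u_1u_2u_3u_4\bigl[-\mu_4 A_1 - i\langle A, d\Phi_4\rangle_g\bigr]$ plus lower-order corrections. Dividing by $\tau$ and passing to the limit, Gaussian-beam stationary phase localises the transversal integral at $p_0$ and produces
\[
\int_{\R} e^{i\lambda x_1}\Bigl[\mu_4 A_1(x_1,p_0) + i\,\bigl\langle A(x_1,p_0), \dot\gamma_2(p_0)^{\flat}\bigr\rangle_{g_0}\Bigr]\, dx_1 = 0.
\]

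Varying $\lambda$ inverts the $x_1$-Fourier transform to give the bracket pointwise in $x_1$; varying $\mu_4$ (with the other $\mu_j$ adjusted to preserve $\sum_j\mu_j=0$) decouples the two terms to yield $A_1(x_1,p_0) = 0$ and $\langle A(x_1,p_0), \dot\gamma_2(p_0)^{\flat}\rangle_{g_0} = 0$; and finally varying the direction of $\gamma_2$ at $p_0$ — so that $\dot\gamma_2(p_0)$ sweeps out $T_{p_0}M_0$ — and then varying $p_0$ over $M_0^{\text{int}}$ forces every component of $A$ to vanish in $M^{\text{int}}$, hence on $M$ by Lipschitz continuity. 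The main difficulty is the asymptotic bookkeeping: the factor $e^{s_j x_1}$ and the beam phase $e^{i\tau\Phi_j}$ both contribute at order $\tau$ after differentiation, so one must carefully orchestrate the four CGO factors — choosing the signs $\mu_j$, the Gaussian-beam phases $\Phi_j$, and which $u_j$ carries the derivative — so that the resulting family of limiting identities is rich enough to separate every component of $A$ without imposing any geometric assumption on the transversal manifold $(M_0,g_0)$.
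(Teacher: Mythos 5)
Your plan follows the paper's own strategy very closely: four CGO harmonic functions built from Gaussian-beam quasimodes concentrated on two intersecting transversal geodesics, beam phases paired conjugately so that the total phase localizes at the intersection, an $x_1$-Fourier parameter supplied by the imaginary parts of the Carleman exponents, and a factor of the large parameter extracted from $d(u_1u_2u_3)$ before a rough stationary-phase limit. However, two ingredients that the paper devotes substantial effort to are missing from your outline, and without them the ``asymptotic bookkeeping'' you flag as the main difficulty does not close.

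First, your plan implicitly treats $A$ as defined and $C^{1,1}$ on all of $\R\times M_0$ when you Fourier-invert in $x_1$ and run stationary and non-stationary phase in the transversal variable, but $A$ lives only on $M$. The paper first sets $u_2=u_3=1$ in \eqref{eq_2_1} and applies a separate boundary-determination result (Proposition~\ref{prop_boundary_A}) to conclude $A|_{\p M}=0$ and $\p_\nu A|_{\p M}=0$; this is exactly what makes the zero extension of $A$ remain $C^{1,1}$ on $\R\times M_0$, and that global $C^{1,1}$ regularity is what permits two integrations by parts against the oscillatory cross terms. Your proposal skips this step entirely. Second, you argue only in the idealised geometry of a single transversal intersection with no self-intersections. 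On a general transversal manifold the non-tangential geodesics $\gamma$ and $\eta$ may self-intersect and meet at finitely many points $p_r$, and the product of four quasimodes then contains, besides the diagonal stationary terms, cross terms with phases $\tilde\Psi^{r}_{klmj}$ whose gradients at $p_r$ could a priori vanish, spoiling non-stationary phase. The paper resolves this by scaling one Carleman weight by a free parameter $L$, choosing $L$ large so that $|\nabla\tilde\Psi^{r}_{klmj}(p_r)|\ge\beta>0$ on every cross term, and then invoking an exponential-separation lemma in $L$ (Lemma~5.2 of \cite{LLLS}) to isolate each intersection point. Your proposal has no analogue of $L$, so the argument would fail on a general $(M_0,g_0)$. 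A smaller point: flipping $\mu_4$ while preserving $\sum_j\mu_j=0$ merely permutes the roles of the $u_j$ and changes the overall sign of the bracket, not the \emph{relative} sign of the $A_1$ and transversal terms, so it does not decouple them; the decoupling you need is obtained, as you also suggest, by running the argument for both $\gamma_2$ and its reverse, which flips only the contraction $\langle A,\dot\gamma_2^{\flat}\rangle$.
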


The starting point in the proof of Proposition \ref{prop_density_form} consists of showing that the boundary traces 
of the $1$-form $A$, as well as of its normal derivative, vanish, as a consequence of the integral identity \eqref{eq_2_1}. 
This allows us to extend $A$ by zero to $\R\times M_0\setminus M$, while preserving its regularity. The proof of Proposition \ref{prop_density_form} then follows the strategy of the proof of Proposition \ref{prop_density_potential} established in  \cite{LLLS}. Specifically, we construct harmonic functions to be used in  \eqref{eq_2_1}, based on suitable Gaussian beams quasimodes associated to two non-tangential intersecting geodesics on the transversal manifold $M_0$. Using the freedom of working with four harmonic functions, we construct a pair of harmonic functions based on a Gaussian beam quasimode $v$ and its complex conjugate $\overline{v}$, concentrated near one geodesic, and another pair of harmonic functions based on a Gaussian beam quasimode $w$ and its complex conjugate $\overline{w}$, concentrated near the other geodesic. The product $d(v \overline{v} w)\overline{w}$ is supported near the finitely many points of intersections of these geodesics, and the product does not have high oscillations. This makes it possible to conclude that $A=0$, using both non-stationary as well as stationary phase arguments (the Laplace method). 

\begin{rem}
Our regularity assumption on $A$  in Proposition \ref{prop_density_form} is motivated by the fact that the continuity of the zero extension of $A$ to $\R\times M_0\setminus M$ is needed for a rough stationary phase argument and the Lipschitz continuity of the gradient of the zero extension of $A$ is needed for a non-stationary phase argument in the proof of Proposition \ref{prop_density_form}.  
\end{rem}

Returning to the proof of Theorem \ref{thm_main}, let us mention that due to the assumptions ($A_{ii}$)  and ($V_{ii}$), ($V_{iii}$), only the linearizations of the Dirichlet--to--Neumann map of order $\geq 3$ become useful when recovering the nonlinear potentials $A(x,z)$ and $V(x,z)$. Considering the $m$th order linearization, $m\ge 3$, leads to the following integral identity, 
\begin{equation}
\label{eq_non_linear_m_integral}
\int_M \big((m+1)i \langle A, d(u_1\cdots u_m)\rangle_g u_{m+1}-\big(mi d^*(A) +V\big) u_1\cdots u_{m+1} \big)dV_g=0,
\end{equation}
where $A=A_{m-1}^{(1)}-A_{m-1}^{(2)}$ and $V=V_{m}^{(1)}-V_m^{(2)}$, which is valid for any $u_l\in C^{2,\alpha}(M)$ harmonic, $l=1,\dots, m+1$. Setting $u_1=\dots=u_{m-3}=1$ in \eqref{eq_non_linear_m_integral} gives the identity 
\begin{equation}
\label{eq_non_linear_m_integral_2}
\begin{aligned}
(m+1)i \int_M  \langle A, d(u_{m-2}  u_{m-1}u_{m})&\rangle_g  u_{m+1} dV_g\\
&= \int_M (mi d^*(A) +V) u_{m-2}  u_{m-1}u_{m} u_{m+1} \big)dV_g. 
\end{aligned}
\end{equation}
To proceed, we first show that  \eqref{eq_non_linear_m_integral_2} implies that $A|_{\p M}=0$ and $\p_\nu A|_{\p M}=0$, and then use a consequence of Proposition \ref{prop_density_form} to obtain that $A\equiv 0$, see Corollary \ref{cor_density_form} below. To recover $V$, we substitute $A=0$ in \eqref{eq_non_linear_m_integral_2}, and rely on Proposition \ref{prop_density_potential}. 

\begin{rem}
The assumptions $(A_i)$, $(A_{ii})$, $(V_i)$, $(V_{ii})$, $(V_{iii})$ in Theorem \ref{thm_main} are made precisely so that the higher order linearizations of the Dirichlet--to--Neumann map $\Lambda_{A,V}$ lead to the integral identities \eqref{eq_non_linear_m_integral}  involving at least four harmonic functions, and the freedom of working with four harmonic functions allows one to solve the inverse boundary problem without any assumption on the transversal manifold, cf. \cite{LLLS}.  
\end{rem}

Let us point out that inverse boundary problems for the nonlinear magnetic Schr\"odinger equation in the Euclidean space,  both in the case of full and partial data, have been studied recently in \cite{Lai_Ting}. The density of certain products of gradients of harmonic functions in the Euclidean space has been recently established in \cite{Carstea_Feizmohammadi}, when solving an inverse boundary problem for certain anisotropic quasilinear elliptic equations. 

Finally, let us remark that inverse boundary problems for nonlinear elliptic PDE have been studied extensively in the literature. We refer to \cite{Feizmohammadi_Oksanen}, \cite{LLLS},  \cite{LLLS_partial}, \cite{CNV_2019}, \cite{Carstea_Feizmohammadi}, \cite{Hervas_Sun},  \cite{IsaNach_1995}
\cite{IsaSyl_94},  \cite{Kang_Nak_02}, \cite{Sun_96}, \cite{Sun_2004}, \cite {Sun_2010}, \cite{Sun_Uhlm_97}, \cite{Krup_Uhlmann_non_linear_1}, \cite{Krup_Uhlmann_2}, \cite{Lai_Ting}, and the reference given there. 

The paper is organized as follows. In Section \ref{sec_harmonic_functions} we recall the construction of harmonic functions on conformally transversally anisotropic manifold based on Gaussian beams  quasimodes constructed on $\R\times M_0$ and localized near a non-tangential geodesics on the transversal manifold $M_0$. For convenience of the reader, in Section \ref{sec_simplified_setting}, we provide a proof of  Proposition \ref{prop_density_form} in a simplified setting. Section \ref{sec_general_setting} is devoted to the proof of Proposition \ref{prop_density_form} in the general case. The proof of Theorem \ref{thm_main} occupies Section \ref{sec_proof_thm_main}. Appendix \ref{sec_rough_stationary_phase} discusses a standard rough version of stationary phase needed in the proof of Proposition \ref{prop_density_form}.  In Appendix \ref{sec_well-posedness}, we show the well-posedness of the Dirichlet problem for the nonlinear magnetic Schr\"odinger equation, in the case of small boundary data.  The determination of the first order boundary traces 
of a scalar function and a $1$--form, via suitable orthogonality relations involving harmonic functions on the manifold $M$, is presented in Appendix \ref{app_boundary_determination}.  Finally, Appendix \ref{app_geodesics} discusses some basic properties of geodesics which are used in the body of the paper.

\section{Gaussian beams quasimodes and construction of harmonic functions}

\label{sec_harmonic_functions}

Let $(M,g)$ be a conformally transversally anisotropic manifold so that $(M,g)\subset\subset (\R\times M_0, c(e\oplus g_0))$. Let us write $x=(x_1, x')$ for local coordinates in $\R\times M_0$. Note that $\phi(x)=\pm \alpha x_1$, $\alpha>0$, is a limiting Carleman weight for $-h^2\Delta_{g}$, see \cite{DKSaloU_2009}. 

Letting $\tilde g=e\oplus g_0$, we have
\begin{equation}
\label{eq_3_1}
c^{\frac{n+2}{4}} \circ (-\Delta_g) \circ c^{-\frac{(n-2)}{4}} = -\Delta_{\tilde g}+ q, 
\end{equation}
where 
\[
 q=  - c^{\frac{n+2}{4}}\Delta_g(c^{-\frac{(n-2)}{4}}),
\]
see \cite{DKurylevLS_2016}.  Here $ q\in C^\infty(\R\times M_0; \R)$.  It follows from \eqref{eq_3_1} that in order to construct harmonic functions on $(M,g)$ based on Gaussian beams quasimodes, we shall need to have Gaussian beams quasimodes for the Schr\"odinger operator $-\Delta_{\tilde g}+q$,  conjugated by an exponential weight corresponding to the limiting Carleman weight $\phi$.  Our quasimodes will be constructed on the manifold $\R\times M_0$ and will be localized to non-tangential  geodesics on the transversal manifold $M_0$.  A unit speed geodesic $\gamma:[-S_1,S_2]\to M_0$, $0<S_1,S_2<\infty$, is called non-tangential if $\gamma(-S_1),\gamma(S_2)\in \p M_0$,  $\dot{\gamma}(-S_1), \dot{\gamma}(S_2)$ are non-tangential vectors to $\p M_0$ and $\gamma(t)\in M_0^{\text{int}}$ for all $-S_1<t<S_2$, see \cite{DKurylevLS_2016}.  As in  \cite{LLLS}, it will be convenient to normalize our quasimodes in $L^4(M_0)$, as later we shall have to deal with products of four such quasimodes. We shall need the following essentially well known result, see \cite[Section 4.1]{Feizmohammadi_Oksanen}, see also  \cite{DKurylevLS_2016}, \cite{LLLS}. 

\begin{prop}
\label{prop_Gaussian_beams}
Let $\alpha>0$, and let $\tau=s+i\lambda$,  $s\ge 1$, with $\lambda\in \R$ being fixed. Then for any $k\in \N$, $R\ge 1$,  there exist $N\in \N$ and families  of Gaussian beam quasimodes $v_1(\cdot; s), v_2(\cdot; s)\in C^\infty(\R\times M_0)$ such that 
\begin{equation}
\label{eq_prop_gaussian_1}
\begin{aligned}
 & \| e^{-\alpha \tau x_1}(- \Delta_{\tilde g} +q) e^{\alpha \tau x_1}v_1(\cdot; s)\|_{H^{k}((I\times M_0)^{\emph{\text{int}}})}=\mathcal{O}(s^{-R}),\\
&\| e^{\alpha \tau x_1}(- \Delta_{\tilde g} +q) e^{-\alpha \tau x_1}v_2(\cdot; s)\|_{H^{k}((I\times M_0)^{\emph{\text{int}}})}=\mathcal{O}(s^{-R}),
\end{aligned}
\end{equation}
and
\begin{equation}
\label{eq_prop_gaussian_2}
\begin{aligned}
\|v_j(\cdot;s)\|_{L^4(I\times M_0)}=\mathcal{O}(1),\quad \|v_j(\cdot;s)\|_{L^\infty(I\times M_0)}=\mathcal{O}(1)s^{\frac{n-2}{8}},\quad j=1,2,
\end{aligned}
\end{equation}
as $s\to \infty$. Here $I\subset \R$ is an arbitrary bounded interval.  The local structure of the quasimodes is as follows. Let 
$p\in \gamma([-S_1,S_2])$ and let $t_1<\dots<t_P$ be the times in $[-S_1,S_2]$ when $\gamma(t_l)=p$. In a sufficiently small neighborhood $U$ of  $p$, the quasimode $v_j$ is a finite sum,
\[
v_j|_U=v_j^{(1)}+\dots + v_j^{(P)}.
\] 
Each $v_j^{(l)}$ has the form
\[
v_1^{(l)}=s^{\frac{n-2}{8}}e^{i\alpha \tau \varphi^{(l)}}a^{(l)}, \quad v_2^{(l)}=s^{\frac{n-2}{8}}e^{i\alpha \tau \varphi^{(l)}}b^{(l)}\quad  l=1, \dots, P, 
\]
where  $\varphi=\varphi^{(l)}\in C^\infty(\overline{U};\C)$ satisfies for $t$ close to $t_l$, 
\[
\varphi(\gamma(t))=t, \quad \nabla \varphi(\gamma(t))=\dot{\gamma}(t), \quad \emph{\text{Im}}\,(\nabla^2\varphi(\gamma(t)))\ge 0, \quad  \emph{\text{Im}}\,(\nabla^2\varphi)|_{\dot{\gamma}(t)^\perp}>0, 
\] 
and $a^{(l)}, b^{(l)}\in C^\infty(\R\times \overline{U})$ are of the form, 
\[
a^{(l)}(x_1, t,y;s)=\bigg(\sum_{j=0}^N\tau^{-j}a_j^{(l)} \bigg) \chi\bigg(\frac{y}{\delta'}\bigg),\quad b^{(l)}(x_1, t,y;s)=\bigg(\sum_{j=0}^N\tau^{-j}b_j^{(l)} \bigg) \chi\bigg(\frac{y}{\delta'}\bigg),
\]
where $a^{(l)}_0=b^{(l)}_0$ is independent of $x_1$ and the potential $q$, 
\[
a^{(l)}_0(t,y)=a_{00}^{(l)}(t)+\mathcal{O}(|y|), \quad a_{00}^{(l)}(t)\ne 0, \quad  \forall t, 
\]
\begin{align*}
  a_1^{(l)}(x_1,t,y)=a_{10}^{(l)}(x_1, t)+\mathcal{O}(|y|),\quad b_1^{(l)}(x_1,t,y)=b_{10}^{(l)}(x_1, t)+\mathcal{O}(|y|).
\end{align*}
Here $a_{10}^{(l)}(x_1, t)=e^{f^{(l)}(t)}\tilde a_{10}^{(l)}(x_1, t)$, $b_{10}^{(l)}(x_1, t)=e^{f^{(l)}(t)}\tilde b_{10}^{(l)}(x_1, t)$, where $f^{(l)}$ is independent of the potential $q$ and $\tilde a_{10}^{(l)}$, $\tilde b_{10}^{(l)}$  satisfy the equations, 
\begin{align*}
&(\p_{x_1}+i \p_t)\tilde a_{10}^{(l)}=\frac{1}{\alpha}\bigg(-\frac{1}{2}e^{-f^{(l)}}(\Delta_{\tilde g} a_0^{(l)})|_{y=0}+C^{(l)}_0q(x_1,t,0)\bigg),\\
&(\p_{x_1}-i \p_t)\tilde b_{10}^{(l)}=\frac{1}{\alpha}\bigg(\frac{1}{2}e^{-f^{(l)}}(\Delta_{\tilde g} a_0^{(l)})|_{y=0}-C_0^{(l)}q(x_1,t,0)\bigg),
\end{align*}
where $C_0^{(l)}\ne 0$ is a constant, independent of the potential $q$. 
Here $(t,y)$ are the Fermi coordinates for $\gamma$ for $t$ close to $t_l$, $\chi\in C^\infty_0(\R^{n-2})$ is such that $0\le \chi\le 1$,   $\chi=1$ for $|y|\le 1/4$ and $\chi=0$ for $|y|\ge 1/2$, and $\delta'>0$ is a fixed number that can be taken arbitrarily small. 

\end{prop}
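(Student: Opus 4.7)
The plan is to follow the standard Gaussian beam WKB construction, as developed for this class of Schr\"odinger operators in \cite{DKurylevLS_2016}, \cite{Feizmohammadi_Oksanen}, \cite{LLLS}, and to trace carefully the explicit structure of the leading and sub-leading amplitudes that the statement asserts. I would take the ansatz $v_1=e^{i\alpha\tau\varphi}a$, $v_2=e^{i\alpha\tau\varphi}b$ and compute the conjugated operators
\[
e^{\mp\alpha\tau x_1}(-\Delta_{\tilde g}+q)e^{\pm\alpha\tau x_1}=-\Delta_{\tilde g}\mp 2\alpha\tau\p_{x_1}-\alpha^2\tau^2+q.
\]
Applying these to the ansatz and sorting by powers of $\tau$ yields, at order $\tau^2$, the eikonal equation $\langle d\varphi,d\varphi\rangle_{\tilde g}=1$ (where $\varphi$ is taken independent of $x_1$, so $\langle d\varphi,d\varphi\rangle_{\tilde g}=\langle d\varphi,d\varphi\rangle_{g_0}=1$), and at order $\tau^{1-j}$ a transport equation for the $j$th amplitude coefficient $a_j$ (respectively $b_j$) whose principal part along $\gamma$ is $(\p_{x_1}\pm i\p_t)$. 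I would solve these formal equations to order $N$ in $\tau^{-1}$ on a tubular neighborhood of $\gamma$ in $M_0$, cross $\R$, then cut off with $\chi(y/\delta')$ to pull the formal solution back to a genuine function on $\R\times M_0$ and estimate the residual.

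In Fermi coordinates $(t,y)$ for $\gamma$ on a neighborhood of a single time $t_l\in[-S_1,S_2]$, the eikonal equation is solved by the classical formal series $\varphi(t,y)=t+\tfrac{1}{2}H(t)y\cdot y+\mathcal{O}(|y|^3)$, where $H(t)$ is a complex symmetric matrix obtained from a matrix Riccati equation along $\gamma$; choosing initial data with $\emph{Im}\,H(t_l)$ positive definite on $\dot\gamma(t_l)^\perp$ propagates this positivity along $\gamma$, producing the Gaussian transverse localization asserted for $\emph{Im}\,(\nabla^2\varphi)$. The transport equation at order $\tau^1$ restricted to $y=0$ is a transport ODE along $\gamma$ for $a_0|_{y=0}=a_{00}(t)$ that does not involve $q$; solving it and then propagating transversally via the higher $|y|$-Taylor coefficients yields $a_0=a_{00}(t)+\mathcal{O}(|y|)$ with $a_{00}$ nowhere vanishing. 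The next transport equation, at order $\tau^0$, gives the stated inhomogeneous $\bar\partial$-type equation for $\tilde a_{10}^{(l)}$, $\tilde b_{10}^{(l)}$ in the variables $(x_1,t)$; the key point is that the coefficient in front of $q$ and the integrating factor $e^{f^{(l)}(t)}$ are determined by $a_0$ alone and hence are independent of $q$, while the inhomogeneity $C_0^{(l)}q(x_1,t,0)$ arises from the zeroth order term of the conjugated operator.

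When $\gamma$ revisits $p$ at times $t_1<\cdots<t_P$, Fermi coordinates are only valid near one passage at a time, so on a sufficiently small neighborhood $U$ of $p$ I would take $v_j|_U$ to be the sum $v_j^{(1)}+\cdots+v_j^{(P)}$ of $P$ independent quasimode pieces, one for each visit; linearity of the WKB equations makes this permissible. Finally, choosing $N=N(k,R)$ large enough and using that $|e^{i\alpha\tau\varphi}|=e^{-s\alpha\,\emph{Im}\,\varphi}$ produces Gaussian decay $e^{-cs|y|^2}$ transverse to $\gamma$, so the truncated amplitude series yields a residual whose $H^k$-norm is $\mathcal{O}(s^{-R})$, proving \eqref{eq_prop_gaussian_1}. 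The same Gaussian profile gives transverse $L^p$ mass of order $s^{-(n-2)/(2p)}$, and the prefactor $s^{(n-2)/8}$ is calibrated to make $\|v_j\|_{L^4(I\times M_0)}=\mathcal{O}(1)$ while $\|v_j\|_{L^\infty(I\times M_0)}=\mathcal{O}(s^{(n-2)/8})$. The conceptual content is standard; the main burden is a careful bookkeeping of the Riccati and transport hierarchy to verify the precise structural claims about $a_0$, $\tilde a_{10}^{(l)}$, $\tilde b_{10}^{(l)}$, $f^{(l)}$, and $C_0^{(l)}$, since these particular features drive the subsequent application of Proposition \ref{prop_density_form}.
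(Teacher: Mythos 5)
The paper does not actually prove this proposition; it defers to \cite{Feizmohammadi_Oksanen} (Section 4.1) and \cite{DKurylevLS_2016}, \cite{LLLS} as an essentially well-known construction. Your sketch correctly reconstructs the standard Gaussian-beam WKB hierarchy used in those references — the conjugation $e^{\mp\alpha\tau x_1}(-\Delta_{\tilde g}+q)e^{\pm\alpha\tau x_1}=-\Delta_{\tilde g}\mp2\alpha\tau\partial_{x_1}-\alpha^2\tau^2+q$, eikonal at order $\tau^2$, matrix Riccati along $\gamma$ for $\text{Im}\,\nabla^2\varphi$, transport operators with principal part $\partial_{x_1}\pm i\partial_t$, the cutoff $\chi(y/\delta')$ producing the residual, and the $s^{(n-2)/8}$ calibration of the $L^4/L^\infty$ norms — and in particular your identification of the order-$\tau^0$ equation as a genuine $\bar\partial$-type equation in $(x_1,t)$ (rather than an ODE in $t$) with $q$ entering only the inhomogeneity is exactly what produces the asserted structure of $\tilde a_{10}^{(l)}$, $\tilde b_{10}^{(l)}$, $f^{(l)}$, and $C_0^{(l)}$.
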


\begin{rem}
\label{rem_Gaussian_beams}
In the special case when the conformal factor $c=1$, we have $q=0$, $g=\tilde g$, and 
\[
e^{\mp\alpha\tau x_1}\circ(-\Delta_{g}) \circ e^{\pm\alpha\tau x_1}=-\Delta_{g}\mp 2\alpha \p_{x_1}-(\alpha \tau)^2.
\]  
Thus, we can take the Gaussian beams quasimodes  in \eqref{eq_prop_gaussian_1}  $v_1=v_2$ independent of $x_1$.
\end{rem}

Next we shall construct harmonic functions on $(M,g)$ based on the Gaussian beams quasimodes of Proposition \ref{prop_Gaussian_beams}. To that end, we shall use the approach of \cite{DKSaloU_2009}, based on Carleman estimates with limiting Carleman weights.  The construction is standard, see  \cite{DKurylevLS_2016},  \cite{LLLS}, and is presented here for the convenience of the reader only.

Assume, as we may,  that $(M,g)$ is embedded in a compact smooth manifold $(N,g)$ without boundary of the same dimension.  Our starting point is the following Carleman estimates for the Schr\"odinger operator, which is established in  \cite[Lemma 4.3]{DKSaloU_2009}. 

\begin{prop}
Let $q\in C^\infty(M)$.  Then given any  $t\in \R$,  we have for  all $h>0$ small enough and all $u\in C_0^\infty(M^0)$,  
\begin{equation}
\label{eq_Car_for_schr}
h\|u\|_{H^{t}_{\emph{\text{scl}}}(N)}\le C\|e^{\frac{\phi }{h}}(-h^2\Delta+h^2q)e^{-\frac{\phi}{h}}  u\|_{H^{t}_{\emph{\text{scl}}}(N)}, \quad C>0.
\end{equation}
\end{prop}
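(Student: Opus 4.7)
The plan is to reduce to the $L^2 = H^0_{\text{scl}}$ case for the free operator and then absorb both the potential $h^2 q$ and the Sobolev index $t$ as lower-order perturbations. First I would establish the baseline $L^2$ Carleman estimate
$$h\|u\|_{L^2(N)} \le C \|P_\phi u\|_{L^2(N)}, \qquad u\in C_0^\infty(M^{\text{int}}),$$
where $P_\phi := e^{\phi/h}(-h^2\Delta)e^{-\phi/h}$. The standard route is to split $P_\phi = A + iB$ into its formally self-adjoint and skew-adjoint parts, whose semiclassical principal symbols are $a(x,\xi) = |\xi|_g^2 - |d\phi|_g^2$ and $b(x,\xi) = 2\langle \xi, d\phi\rangle_g$. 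Expanding
$$\|P_\phi u\|^2_{L^2(N)} = \|A u\|^2_{L^2(N)} + \|B u\|^2_{L^2(N)} + (i[A,B]u,u)_{L^2(N)},$$
the defining property of a limiting Carleman weight asserts a positivity of $\{a,b\}$ on the characteristic set $\{a = b = 0\}$, which via a Gårding-type argument gives $(i[A,B]u,u)_{L^2(N)} \ge c h^2\|u\|_{L^2(N)}^2$ modulo terms absorbed by $\|Au\|^2 + \|Bu\|^2$. Since $\phi(x) = \pm\alpha x_1$ is already known from \cite{DKSaloU_2009} to be a limiting Carleman weight on $(\R\times M_0, c(e\oplus g_0))$, this step applies directly.

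Next I would put the potential back: because $\|h^2 q u\|_{L^2(N)} \le Ch^2\|u\|_{L^2(N)}$, the triangle inequality gives
$$\|P_\phi u\|_{L^2(N)} \le \|(P_\phi + h^2 q)u\|_{L^2(N)} + Ch^2\|u\|_{L^2(N)},$$
and the $Ch^2\|u\|_{L^2}$ term is dominated by $\tfrac12 h\|u\|_{L^2}$ for $h$ small enough, hence absorbed into the baseline estimate. This yields \eqref{eq_Car_for_schr} at $t=0$.

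Finally, I would pass to general $t \in \R$ by conjugation with the semiclassical Bessel-type operator $\Lambda_{\text{scl}}^t := \langle hD\rangle^t$ on the closed manifold $N$. Setting $v := \Lambda_{\text{scl}}^t u$ and applying the $t=0$ estimate to $v$ on $N$ (which is legitimate since $v$ lies in $H^\infty$ on the closed manifold), one rewrites
$$\Lambda_{\text{scl}}^t\bigl(e^{\phi/h}(-h^2\Delta + h^2 q)e^{-\phi/h}\bigr)u = \bigl(e^{\phi/h}(-h^2\Delta + h^2 q)e^{-\phi/h}\bigr)\Lambda_{\text{scl}}^t u + [\Lambda_{\text{scl}}^t, P_\phi + h^2 q]u,$$
and observes that the commutator is semiclassical of order $t+1$ with an extra factor of $h$. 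Consequently it contributes at most $Ch\|u\|_{H^t_{\text{scl}}(N)}$ to the right-hand side, which can be absorbed into the $h\|u\|_{H^t_{\text{scl}}(N)}$ on the left after choosing $h$ small. The main obstacle is ensuring that the limiting Carleman weight property really yields the sharp gain of a single power of $h$ on the left; once that is in place, the potential and the Sobolev shift are routine pseudodifferential bookkeeping, and the rest of the argument is effectively already present in \cite{DKSaloU_2009}.
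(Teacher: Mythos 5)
The paper does not prove this proposition at all: it is a direct citation of Lemma 4.3 in \cite{DKSaloU_2009}, so there is no in-paper argument to compare against. That said, your sketch has a genuine gap in the Sobolev-shift step, and the $L^2$ and potential-absorption parts, while standard and essentially correct, are not the delicate part of the lemma.

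The problem is the commutator estimate. You assert that $[\Lambda_{\text{scl}}^t, P_\phi+h^2q]$ is ``semiclassical of order $t+1$ with an extra factor of $h$'' and then that it ``contributes at most $Ch\|u\|_{H^t_{\text{scl}}(N)}$.'' These two claims are inconsistent: an operator in $h\,\Psi^{t+1}_{\text{scl}}$ maps $H^{t+1}_{\text{scl}}\to L^2$ with $\mathcal{O}(h)$ norm, so the bound you actually get is $Ch\|u\|_{H^{t+1}_{\text{scl}}}$, which is a stronger norm than anything appearing on the left-hand side and cannot be absorbed. Even if one upgrades the commutator to $h\,\Psi^{t}_{\text{scl}}$ (which is true if $\Lambda_{\text{scl}}^t$ is built from the \emph{same} Laplacian, since then the principal parts Poisson-commute), the resulting term is $C_1h\|u\|_{H^{t}_{\text{scl}}}$, and absorbing it into $h\|u\|_{H^{t}_{\text{scl}}}$ on the left requires $C_0C_1<1$, where $C_0$ is the constant from the $t=0$ Carleman estimate. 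Neither constant shrinks as $h\to 0$, so ``choosing $h$ small'' does nothing here. Your argument, as written, does not close.

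What actually makes the shift to general $t$ work is the specific product structure that is being used: $\phi=\pm\alpha x_1$ is linear in $x_1$, and $\tilde g=e\oplus g_0$ is independent of $x_1$. With $\Lambda_{\text{scl}}^t=(1-h^2\Delta_{\tilde g})^{t/2}$, every term in $P_\phi=e^{\alpha x_1/h}(-h^2\Delta_{\tilde g})e^{-\alpha x_1/h}=-h^2\Delta_{\tilde g}-\alpha^2+2i\alpha\,hD_{x_1}$ commutes \emph{exactly} with $\Lambda_{\text{scl}}^t$, because $D_{x_1}$ commutes with $\Delta_{\tilde g}$ and hence with any function of it. So $[\Lambda_{\text{scl}}^t,P_\phi]=0$ identically, and the only nontrivial commutator is $[\Lambda_{\text{scl}}^t,h^2 q]\in h^3\Psi^{t-1}_{\text{scl}}$, which is genuinely absorbable. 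A generic limiting Carleman weight on a generic manifold does not enjoy this cancellation, and your proposal never invokes it; that is the missing idea.
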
 
Here $H^t(N)$, $t\in\R$, is the standard  Sobolev space, equipped with the natural semiclassical norm,
\[
\|u\|_{H^t_{\text{scl}}(N)}=\|(1-h^2\Delta_g)^{\frac{t}{2}} u\|_{L^2(N)}. 
\]
Using a standard argument, see \cite{DKSaloU_2009}, we convert the Carleman estimate \eqref{eq_Car_for_schr} into the following solvability result. 

\begin{prop}
\label{prop_solvability}
Let  $t\in \R$.  If  $h>0$ is small enough, then for any $v\in H^{t}(M^{\emph{\text{int}}})$, there is a solution $u\in H^t(M^{\emph{\text{int}}})$ of the equation 
\[
e^{\frac{\phi}{h}}(-h^2\Delta +h^2q)e^{-\frac{\phi}{h}}  u=v \quad \text{in}\quad M^{\emph{\text{int}}},
\]
which satisfies 
\[
\|u\|_{H^t_{\emph{\text{scl}}}(M^{\emph{\text{int}}})}\le \frac{C}{h} \|v\|_{H^{t}_{\emph{\text{scl}}}(M^{\emph{\text{int}}})}.
\]
\end{prop}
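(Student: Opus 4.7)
The strategy is the standard Hahn--Banach/duality argument that converts an a priori Carleman estimate for the adjoint operator into a solvability result for the operator itself. Write $P_\phi := e^{\phi/h}(-h^2\Delta + h^2 q) e^{-\phi/h}$ for the conjugated operator appearing in the statement; since $\phi$ is real, its formal $L^2$ adjoint is
\[
P_\phi^* = e^{-\phi/h}(-h^2\Delta + h^2 \overline{q}) e^{\phi/h},
\]
which has exactly the same structure as $P_\phi$ with $\phi$ replaced by $-\phi$ and $q$ replaced by $\overline{q}$. Since $-\phi = \mp\alpha x_1$ is again a limiting Carleman weight for $-h^2\Delta$, the Carleman estimate \eqref{eq_Car_for_schr} applies equally to $P_\phi^*$. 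Applying it at the Sobolev index $-t$ in place of $t$ gives
\[
h \|w\|_{H^{-t}_{\text{scl}}(N)} \leq C \|P_\phi^* w\|_{H^{-t}_{\text{scl}}(N)}, \qquad w \in C_0^\infty(M^{\text{int}}),
\]
for all sufficiently small $h > 0$.

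Given $v \in H^t(M^{\text{int}})$, I would define a linear functional $L$ on the subspace $\{P_\phi^* w : w \in C_0^\infty(M^{\text{int}})\}$ of $H^{-t}_{\text{scl}}(M^{\text{int}})$ by
\[
L(P_\phi^* w) := (w, v)_{M^{\text{int}}},
\]
where $(\cdot, \cdot)_{M^{\text{int}}}$ denotes the sesquilinear $H^{-t}$--$H^t$ duality pairing on $M^{\text{int}}$ extending the $L^2$ inner product. The adjoint Carleman estimate ensures both that $L$ is well defined, since $P_\phi^* w = 0$ forces $w = 0$, and that
\[
|L(P_\phi^* w)| \leq \|w\|_{H^{-t}_{\text{scl}}(N)} \|v\|_{H^t_{\text{scl}}(M^{\text{int}})} \leq \frac{C}{h} \|P_\phi^* w\|_{H^{-t}_{\text{scl}}(N)} \|v\|_{H^t_{\text{scl}}(M^{\text{int}})},
\]
so $L$ is bounded with operator norm at most $(C/h) \|v\|_{H^t_{\text{scl}}(M^{\text{int}})}$.

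By the Hahn--Banach theorem, $L$ extends to a bounded linear functional on all of $H^{-t}_{\text{scl}}(M^{\text{int}})$ with the same norm bound. Identifying the dual of $H^{-t}_{\text{scl}}(M^{\text{int}})$ with $H^t_{\text{scl}}(M^{\text{int}})$ via the pairing, I obtain a representative $u \in H^t_{\text{scl}}(M^{\text{int}})$ with $\|u\|_{H^t_{\text{scl}}(M^{\text{int}})} \leq (C/h) \|v\|_{H^t_{\text{scl}}(M^{\text{int}})}$. Testing the representation against $\varphi = P_\phi^* w$ for arbitrary $w \in C_0^\infty(M^{\text{int}})$ yields $(w, v) = L(P_\phi^* w) = (P_\phi^* w, u) = (w, P_\phi u)$, hence $P_\phi u = v$ in $\mathcal{D}'(M^{\text{int}})$. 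The only point requiring any care is the extension of \eqref{eq_Car_for_schr} to negative Sobolev indices and to the weight $-\phi$, both of which are already built into the statement of the preceding proposition; once this is granted, the remainder is a purely abstract duality step and presents no additional obstacle.
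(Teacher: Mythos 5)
Your strategy — converting the adjoint Carleman estimate for $P_\phi^* = e^{-\phi/h}(-h^2\Delta + h^2\overline{q})e^{\phi/h}$ at index $-t$ into solvability for $P_\phi$ via Hahn--Banach and Riesz duality — is exactly the ``standard argument'' that the paper invokes by reference to \cite{DKSaloU_2009} without reproducing it, so the approach is the intended one. There is, however, a real technical slip in the choice of ambient Hilbert space, and as written the Hahn--Banach step would not go through. You set up $L$ on the subspace $\{P_\phi^* w : w \in C_0^\infty(M^{\text{int}})\}$ \emph{of} $H^{-t}_{\text{scl}}(M^{\text{int}})$, but the bound the adjoint Carleman estimate hands you is
\[
|L(P_\phi^* w)| \le \frac{C}{h}\,\|P_\phi^* w\|_{H^{-t}_{\text{scl}}(N)}\,\|v\|_{H^t_{\text{scl}}(M^{\text{int}})},
\]
i.e.\ a bound by the norm on the closed manifold $N$. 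To apply Hahn--Banach inside $H^{-t}_{\text{scl}}(M^{\text{int}})$ you would need $|L(P_\phi^* w)| \lesssim \|P_\phi^* w\|_{H^{-t}_{\text{scl}}(M^{\text{int}})}$, and the elementary inequality $\|\cdot\|_{H^{-t}_{\text{scl}}(M^{\text{int}})} \le \|\cdot\|_{H^{-t}_{\text{scl}}(N)}$ (the restriction norm is an infimum over extensions) points the wrong way, with no uniform reverse bound as $\operatorname{supp} w$ approaches $\p M$. Separately, the dual of the quotient space $H^{-t}(M^{\text{int}})$ under the $L^2$ pairing is the space of $H^{t}(N)$-distributions supported in $M$, not $H^{t}(M^{\text{int}})$, so the final duality identification is also not quite as stated.

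Both issues disappear if you run the entire argument on $N$ and restrict at the very end. Fix an extension $V\in H^t(N)$ of $v$ with $\|V\|_{H^t_{\text{scl}}(N)}\le 2\|v\|_{H^t_{\text{scl}}(M^{\text{int}})}$, define $L(P_\phi^* w) := (w,V)_{L^2(N)}$ on $\{P_\phi^* w : w \in C_0^\infty(M^{\text{int}})\}$ viewed as a subspace of $H^{-t}_{\text{scl}}(N)$, and use the adjoint Carleman estimate (valid since $-\phi$ is again a limiting Carleman weight and $\overline{q}\in C^\infty$) to bound $L$ by $(C/h)\|P_\phi^*w\|_{H^{-t}_{\text{scl}}(N)}\|V\|_{H^t_{\text{scl}}(N)}$. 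Hahn--Banach now extends $L$ to $H^{-t}_{\text{scl}}(N)$, whose dual is cleanly $H^{t}_{\text{scl}}(N)$, giving $U\in H^t(N)$ with $\|U\|_{H^t_{\text{scl}}(N)}\le (2C/h)\|v\|_{H^t_{\text{scl}}(M^{\text{int}})}$. Testing against $P_\phi^* w$ for $w\in C_0^\infty(M^{\text{int}})$ gives $P_\phi U = v$ in $\mathcal{D}'(M^{\text{int}})$, and setting $u := U|_{M^{\text{int}}}$ yields $\|u\|_{H^t_{\text{scl}}(M^{\text{int}})}\le \|U\|_{H^t_{\text{scl}}(N)}$, which is the claimed estimate.
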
 
Here 
\[
H^t(M^{\emph{\text{int}}})=\{V|_{M^{\emph{\text{int}}}}: V\in H^t(N)\}, \quad t\in \R,
\]
with the norm
\[
\|v\|_{H^t_{\emph{\text{scl}}}(M^0)}=\inf_{V\in H^t_{\emph{\text{scl}}}(N),v=V|_{M^{\text{int}}}}\|V\|_{H^t_{\emph{\text{scl}}}(N)}.
\]

Let  $\alpha>0$, and let 
\[
\tau=s+i\lambda, \quad 1\le s=\frac{1}{h}, \quad \lambda\in \R, \quad \lambda\quad  \text{fixed}. 
\]
In view of \eqref{eq_3_1}, to construct suitable harmonic functions on $(M,g)$, we shall find complex geometric optics solution to the equation 
\begin{equation}
\label{eq_CGO_6}
(-\Delta_{\tilde g}+q) \tilde u=0\quad \text{in}\quad M^{\text{int}},
\end{equation}
having the form
\[
\tilde u_1=e^{\alpha \tau x_1}(v_1+r_1), \quad \tilde u_2=e^{-\alpha \tau x_1}(v_2+r_2)
\]
where $v_1$, $v_2$ are the Gaussian beam quasimodes given in Proposition \ref{prop_Gaussian_beams}, and $r_1$, $r_2$ are the remainder terms.   
Thus,  $\tilde u_1$  is a solution of \eqref{eq_CGO_6}
 provided that 
 \begin{equation}
 \label{eq_cgo_conjug}
 e^{-  \frac{\alpha x_1}{h}} (-h^2\Delta_{\tilde g}+h^2 q ) e^{\frac{\alpha x_1}{h}}(e^{i\alpha \lambda x_1}r_1) =- e^{i \alpha \lambda x_1} e^{-\alpha \tau x_1}(-h^2\Delta_{\tilde g}+h^2 q ) e^{\alpha \tau x_1}v_1.
 \end{equation}
For any $k\in \N$, $R\ge 1$, arbitrarily large,  Proposition \ref{prop_solvability} and Proposition \ref{prop_Gaussian_beams} imply that there is $r\in H^k(M^\text{int})$ such that 
\[
\|r_1\|_{H^k_{\text{scl}}(M^{\text{int}})}\le \mathcal{O}(h^{-1})\|e^{-\alpha \tau x_1} (-h^2\Delta_{\tilde g}+h^2 q ) e^{\alpha \tau x_1}v_1\|_{H^k_{\text{scl}}(M^{\text{int}})}=\mathcal{O}(h^{R-1}),
\]
and therefore, for any $K$, there is $R$ large enough so that
\[
\|r_1\|_{H^k(M^{\text{int}})}\le h^{-k}\|r_1\|_{H^k_{\text{scl}}(M^{\text{int}})}=\mathcal{O}(h^K).
\]
Similarly, one can construct $r_2$. This together with \eqref{eq_3_1} gives the following result concerning the construction of harmonic functions on $(M,g)$ based on Gaussian beams quasimodes. 

\begin{prop}
\label{prop_CGO_general_mnfld}
Let $\alpha>0$, and let $\tau=s+i\lambda$, $s=\frac{1}{h}$, with $\lambda\in \R$ being fixed.  For all $k$, $K$, and  $h>0$ small enough, there are  $u_1, u_2\in H^k(M^{\emph{\text{int}}})$ of $-\Delta_g u_j=0$ in $M^{\emph{\text{int}}}$ having the form
\[
u_1=e^{\alpha \tau x_1} c^{-\frac{(n-2)}{4}} (v_1+r_1), \quad u_2=e^{-\alpha \tau x_1} c^{-\frac{(n-2)}{4}} (v_2+r_2),
\]
where $v_1=v_1(\cdot; s), v_2=v_2(\cdot; s)\in C^\infty(\R\times M_0)$ are the Gaussian beam quasimodes given in Proposition \ref{prop_Gaussian_beams}, and $r_1, r_2\in H^k(M^{\emph{\text{int}}})$ are such that $\|r_j\|_{H^k(M^{\emph{\text{int}}})}=\mathcal{O}(h^K)$ as $h\to 0$.
\end{prop}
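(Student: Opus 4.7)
The plan is to reduce the problem to constructing complex geometric optics solutions of the conjugated Schr\"odinger equation on the product $(\R\times M_0, \tilde g)$, and then conformally scale back to $(M,g)$. By the conformal identity \eqref{eq_3_1}, a function $\tilde u$ satisfying $(-\Delta_{\tilde g}+q)\tilde u=0$ on $M^{\text{int}}$ yields a harmonic function $u=c^{-(n-2)/4}\tilde u$ on $(M,g)$. Hence it suffices to produce $\tilde u_j$ of the form $\tilde u_1=e^{\alpha\tau x_1}(v_1+r_1)$ and $\tilde u_2=e^{-\alpha\tau x_1}(v_2+r_2)$ with the Gaussian beam quasimodes $v_j$ from Proposition \ref{prop_Gaussian_beams} and small remainders $r_j$.

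For $\tilde u_1$, I would conjugate by $e^{\alpha\tau x_1}=e^{\alpha x_1/h}e^{i\alpha\lambda x_1}$ and observe, as in \eqref{eq_cgo_conjug}, that the equation $(-\Delta_{\tilde g}+q)\tilde u_1=0$ is equivalent to
\[
e^{-\alpha x_1/h}(-h^2\Delta_{\tilde g}+h^2q)e^{\alpha x_1/h}\bigl(e^{i\alpha\lambda x_1}r_1\bigr)=-e^{i\alpha\lambda x_1}e^{-\alpha\tau x_1}(-h^2\Delta_{\tilde g}+h^2q)e^{\alpha\tau x_1}v_1.
\]
The first quasimode estimate of \eqref{eq_prop_gaussian_1} shows that the right-hand side has $H^k_{\text{scl}}(M^{\text{int}})$ norm of order $\mathcal{O}(h^{R})$ for any preassigned $R\ge 1$. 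Applying Proposition \ref{prop_solvability} with limiting Carleman weight $\phi=\alpha x_1$ to the operator $-h^2\Delta_{\tilde g}+h^2q$ produces $r_1\in H^k(M^{\text{int}})$ solving this equation with the bound
\[
\|r_1\|_{H^k_{\text{scl}}(M^{\text{int}})}=\mathcal{O}(h^{R-1}).
\]
The analogous construction for $r_2$ uses the sign-reversed Carleman weight $-\alpha x_1$ together with the second quasimode estimate of \eqref{eq_prop_gaussian_1}; note that the Carleman estimate \eqref{eq_Car_for_schr} is symmetric in $\phi\leftrightarrow -\phi$, so Proposition \ref{prop_solvability} applies in both signs.

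Finally, I convert the semiclassical bounds into standard Sobolev bounds: since $\|\cdot\|_{H^k(M^{\text{int}})}\le h^{-k}\|\cdot\|_{H^k_{\text{scl}}(M^{\text{int}})}$, choosing $R\ge K+k+1$ in the quasimode construction gives $\|r_j\|_{H^k(M^{\text{int}})}=\mathcal{O}(h^K)$, as required. Setting $u_j=c^{-(n-2)/4}\tilde u_j$ then yields the asserted harmonic functions on $(M,g)$. The only delicate point is bookkeeping the loss of powers of $h$ between the two Sobolev norms, which is handled by prescribing $R$ large enough at the quasimode construction step, exploiting the flexibility offered by Proposition \ref{prop_Gaussian_beams}; everything else is a routine application of Carleman solvability combined with the conformal identity \eqref{eq_3_1}.
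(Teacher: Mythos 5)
Your proposal is correct and follows essentially the same route as the paper: conformal reduction via \eqref{eq_3_1}, conjugation of the Schr\"odinger operator on $\R\times M_0$ to obtain \eqref{eq_cgo_conjug}, solving for $r_j$ using the Carleman-based solvability result (Proposition \ref{prop_solvability}) together with the quasimode estimates \eqref{eq_prop_gaussian_1}, and then converting semiclassical to standard Sobolev bounds by taking $R$ sufficiently large. The only small addition you make beyond the paper's wording is the explicit remark that the Carleman estimate is symmetric under $\phi\mapsto -\phi$, which justifies the construction of $r_2$; the paper handles this implicitly with ``similarly.''
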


\begin{rem}
\label{rem_CGO_general_mnfld}
Taking $k>n/2+3$ and using the Sobolev embedding $H^k(M^{\emph{\text{int}}})\subset C^3(M)$, we see that $u_j\in C^3(M)$ with 
\[
\|r_j\|_{C^3(M)}=\mathcal{O}(h^K), 
\]
as $h\to 0$, $j=1,2$. 
\end{rem}

\section{Proof of Proposition \ref{prop_density_form} in a simplified setting}

\label{sec_simplified_setting}

The proof of Proposition \ref{prop_density_form} will follow along the lines of the proof of Proposition 5.1 in \cite{LLLS}. Before we prove Proposition \ref{prop_density_form} in the general case, let us explain the main ideas in a simplified setting. 

Let us assume that each point $p\in M_0^{\text{int}}$ is the unique intersection point of two distinct non-tangential 
non-self-intersecting geodesics $\gamma$ and $\eta$. Assume furthermore that the conformal factor $c=1$. As we shall see below, in this simplified setting the continuity of $A$ suffices, and therefore to extend $A$ by $0$ to the continuous form on $\R\times M_0\setminus M$, we only need to show $A|_{\p M}=0$.  This follows by taking $u_2=u_3=1$ in \eqref{eq_2_1} and applying Proposition \ref{prop_boundary_A}. 

In view of Proposition \ref{prop_density_Holder}, we see that \eqref{eq_2_1} also holds for all harmonic functions $u_j\in C^{2,\alpha}(M)$, $0<\alpha<1$, $j=1,\dots, 4$. 

Let $s=\frac{1}{h}$, and let $\lambda\in \R$ be fixed. Our choice of the harmonic functions below will be similar to \cite{LLLS}. Specifically, using Proposition \ref{prop_CGO_general_mnfld} and Remark \ref{rem_CGO_general_mnfld}, we see that there exist harmonic functions $u_j\in C^3(M)$, $j=1,\dots, 4$, on $(M,g)$ of the form
\begin{equation}
\label{eq_4_1}
\begin{aligned}
&u_1=e^{-(s+i\lambda)x_1}(v+r_1), \quad u_2=\overline{e^{(s+i\lambda)x_1}(v+r_2)},\\
&u_3=e^{-sx_1}(w+r_3), \quad u_4=\overline{e^{sx_1}(w+r_4)},
\end{aligned}
\end{equation}
where 
\begin{equation}
\label{eq_4_2}
\|r_j\|_{C^1(M)}=\mathcal{O}(s^{-K}),
\end{equation} 
as $s\to \infty$, $K\gg 1$, and $v=v(\cdot;s), w=w(\cdot;s)\in C^\infty(M_0)$ are Gaussian beams quasimodes concentrating near the geodesics $\eta$ and $\gamma$, respectively, constructed in Proposition \ref{prop_Gaussian_beams}, see also Remark \ref{rem_Gaussian_beams}. We have 
\begin{equation}
\label{eq_4_3}
v(x';s)=s^{\frac{n-2}{8}}e^{i(s+i\lambda)\varphi(x')}a(x';s), \quad w(x';s)=s^{\frac{n-2}{8}}e^{i s\psi(x')}b(x';s),
\end{equation}
where 
\begin{equation}
\label{eq_4_4}
\begin{aligned}
\varphi(\eta(t))=t, \quad \nabla \varphi(\eta(t))=\dot{\eta}(t), \quad \text{Im}\,(\nabla^2\varphi(\eta(t)))\ge 0, \quad  \text{Im}\,(\nabla^2\varphi)|_{\dot{\eta}(t)^\perp}>0, \\
\psi(\gamma(\tau))=\tau, \quad \nabla \psi(\gamma(\tau))=\dot{\gamma}(\tau), \quad \text{Im}\,(\nabla^2\psi(\gamma(\tau)))\ge 0, \quad  \text{Im}\,(\nabla^2\psi)|_{\dot{\gamma}(\tau)^\perp}>0,
\end{aligned}
\end{equation}
and 
\begin{equation}
\label{eq_4_5}
a(t,y;s)=\bigg(\sum_{j=0}^N\tau^{-j}a_j \bigg) \chi\bigg(\frac{y}{\delta'}\bigg),\quad b(\tau, z;s)=\bigg(\sum_{j=0}^N\tau^{-j}b_j \bigg) \chi\bigg(\frac{z}{\delta'}\bigg),
\end{equation}
where
\begin{equation}
\label{eq_4_6}
\begin{aligned}
&a_0(t,y)=a_{00}^{(l)}(t)+\mathcal{O}(|y|), \quad a_{00}(t)\ne 0, \quad  \forall t, \\
&b_0(\tau,z)=a_{00}(\tau)+\mathcal{O}(|z|), \quad b_{00}(\tau)\ne 0, \quad  \forall \tau.
\end{aligned}
\end{equation}
Here $(t,y)$ and $(\tau,z)$ are the Fermi coordinates for the geodesics $\eta$ and $\gamma$,  $\chi\in C^\infty_0(\R^{n-2})$ is such that $0\le \chi\le 1$,   $\chi=1$ for $|y|\le 1/4$ and $\chi=0$ for $|y|\ge 1/2$, and $\delta'>0$ is a fixed number that can be taken arbitrarily small. We also have 
\begin{equation}
\label{eq_4_7}
\|v\|_{L^4(M_0)}=\|w\|_{L^4(M_0)}=\mathcal{O}(1), \quad \|v\|_{L^\infty(M_0)}=\|w\|_{L^\infty(M_0)}=\mathcal{O}(s^{\frac{n-2}{8}}),
\end{equation}
as $s\to \infty$.  Similarly, we find that 
\begin{equation}
\label{eq_4_8}
\begin{aligned}
&\|s^{\frac{n-2}{8}}e^{i(s+i\lambda)\varphi}\nabla a \|_{L^4(M_0)}=\|s^{\frac{n-2}{8}}e^{is\psi}\nabla b\|_{L^4(M_0)}=\mathcal{O}(1),\\
&\|\nabla v\|_{L^4(M_0)}=\mathcal{O}(s), \quad \|\nabla w\|_{L^4(M_0)}=\mathcal{O}(s),\\ 
&\|\nabla v\|_{L^\infty(M_0)}=\mathcal{O}(s^{\frac{n+6}{8}}), \quad \|\nabla w\|_{L^\infty(M_0)}=\mathcal{O}(s^{\frac{n+6}{8}}),
\end{aligned} 
\end{equation}
as $s\to \infty$.

Now it follows from \eqref{eq_4_1} that 
\[
(u_1u_2u_3)(x)= e^{-2i\lambda x_1-s x_1} (|v(x')|^2w(x')+R(x)), 
\]
where
\[
R=|v|^2r_3+(w+r_3)(v\overline{r_2}+\overline{v}r_1+r_1\overline{r_2}).
\]
Using \eqref{eq_4_2}, \eqref{eq_4_7}, and \eqref{eq_4_8}, we see that 
\begin{equation}
\label{eq_rem_R}
\|R\|_{C^1(M)}=\mathcal{O}(s^{-L}),
\end{equation}
where $L$ is large depending on $K$.  Hence, we have 
\[
\p_{x_1}(u_1u_2u_3)= e^{-2i\lambda x_1-s x_1} [(-2i\lambda -s )(|v|^2w+R)+\p_{x_1}R],
\]
and therefore, using \eqref{eq_rem_R}, \eqref{eq_4_2},  and \eqref{eq_4_7}, we get 
\begin{equation}
\label{eq_4_13}
\p_{x_1}(u_1u_2u_3)u_4=-s e^{-2i\lambda x_1} |v|^2|w|^2+\mathcal{O}_{L^1(M)}(1),
\end{equation}
as $s\to \infty$. We also get 
\[
\p_{x_k}(u_1u_2u_3)= e^{-2i\lambda x_1-s x_1} (\p_{x_k}(|v|^2w)+\p_{x_k}(R)),
\]
for $k=2,\dots, n$, and therefore, \eqref{eq_rem_R}, \eqref{eq_4_2}, \eqref{eq_4_7}, and \eqref{eq_4_8}, 
\begin{equation}
\label{eq_4_13_1}
\p_{x_k}(u_1u_2u_3)u_4= e^{-2i\lambda x_1} \p_{x_k}(|v|^2w)\overline{w}+\mathcal{O}_{L^1(M)}(1),
\end{equation}
as $s\to \infty$. Writing $A=(A_1, A')$, and using  \eqref{eq_4_13}, \eqref{eq_4_13_1},  we conclude that 
\begin{equation}
\label{eq_4_14}
\langle A,d(u_1u_2u_3) \rangle_{g}u_4=e^{-2i\lambda x_1}(-s A_1|v|^2|w|^2+ \langle A', d_{x'}(|v|^2w)\overline{w}\rangle_{g_0})+\mathcal{O}_{L^1(M)}(1),
\end{equation}
as $s\to \infty$. It follows from \eqref{eq_2_1} with the help of \eqref{eq_4_14} that 
\begin{equation}
\label{eq_4_15}
\int_M e^{-2i\lambda x_1} (-s A_1|v|^2|w|^2+ \langle A', d_{x'}(|v|^2w)\overline{w}\rangle_{g_0})dV_g=\mathcal{O}(1),
\end{equation}
as $s\to \infty$. 

Extending $A$ by zero to $\R\times M_0\setminus M$, and denoting the extension again by $A$, we see that $A\in C(\R\times M_0, T^*(\R\times M_0))$ as $A|_{\p M}=0$.  Denoting the partial Fourier transform of $A$ in the $x_1$ variable by $\hat{A}(\lambda, x')$, we get from \eqref{eq_4_15} that 
\begin{equation}
\label{eq_4_16}
\int_{M_0} (-s \hat A_1 (2\lambda, \cdot)|v|^2|w|^2+ \langle \hat A'(2\lambda,\cdot ), d_{x'}(|v|^2w)\overline{w}\rangle_{g_0})dV_{g_0}=\mathcal{O}(1),
\end{equation}
as $s\to \infty$. Since $v$ and $w$ can be chosen to be supported in arbitrarily small but fixed neighborhoods of $\eta$ and $\gamma$, respectively, and since $\eta$ and $\gamma$ only intersect at $p$, the products $|v|^2|w|^2$ and $d_{x'}(|v|^2w)\overline{w}$ concentrate in a small neighborhood $U$ of $p$.  Using \eqref{eq_4_3} and \eqref{eq_4_5}, we see that in $U$, 
\begin{equation}
\label{eq_4_17}
\begin{aligned}
|v|^2|w|^2=s^{\frac{n-2}{2}}e^{-2s(\text{Im}\, \varphi+ \text{Im}\, \psi)}e^{-2\lambda\text{Re}\,\varphi} (|a_0|^2|b_0|^2+\mathcal{O}_{L^\infty(M_0)}(1/s))\\
= s^{\frac{n-2}{2}}e^{-2s(\text{Im}\, \varphi+ \text{Im}\, \psi)}e^{-2\lambda\text{Re}\,\varphi} |a_0|^2|b_0|^2+\mathcal{O}_{L^1(M_0)}(1/s),
\end{aligned}
\end{equation}
and 
\begin{equation}
\label{eq_4_18}
\begin{aligned}
d_{x'}(|v|^2w)\overline{w}=&s^{\frac{n-2}{2}}e^{-2s(\text{Im}\, \varphi+ \text{Im}\, \psi)}e^{-2\lambda\text{Re}\,\varphi} [(is(2 id\text{Im}\, \varphi+  d\psi)\\
 & (|a_0|^2|b_0|^2+\mathcal{O}_{L^\infty(M_0)}(1/s))
-2\lambda (d\text{Re}\,\varphi) |a|^2|b|^2+  d_{x'}(|a|^2b)\overline{b}]\\
=&s^{\frac{n-2}{2}}e^{-2s(\text{Im}\, \varphi+ \text{Im}\, \psi)}e^{-2\lambda\text{Re}\,\varphi} is(2 id\text{Im}\, \varphi+  d\psi) |a_0|^2|b_0|^2+\mathcal{O}_{L^1(M_0)}(1),
\end{aligned}
\end{equation}
as $s\to \infty$. Substituting \eqref{eq_4_17} and \eqref{eq_4_18} into \eqref{eq_4_16}, and dividing by $s^{\frac{1}{2}}$, we obtain that 
\begin{equation}
\label{eq_4_19}
\begin{aligned}
s^{\frac{n-1}{2}}\int_{U} (- \hat A_1 (2\lambda, \cdot)+ i\langle \hat A'(2\lambda,\cdot ), 2i d\text{Im}\,\varphi+d\psi\rangle_{g_0})e^{-2\lambda\text{Re}\, \varphi}|a_0|^2|b_0|^2e^{-s\Psi}dV_{g_0}\\
=\mathcal{O}(s^{-\frac{1}{2}}),
\end{aligned}
\end{equation}
as $s\to \infty$, where
\[
\Psi=2(\text{Im}\, \varphi+ \text{Im}\, \psi).
\] 
It follows from \eqref{eq_4_4} that 
\[
\Psi(p)=0, \quad d\Psi(p)=0, \quad \nabla^2\Psi(p)>0,
\]
where the later inequality is a consequence of the fact that the Hessians of $\text{Im}\, \varphi$ and $\text{Im}\, \psi$ at $p$ are positive definite in the directions orthogonal to $\eta$ and $\gamma$, respectively. 

Let us now denote by $z=(z_1,\dots, z_{n-1})$  geodesic normal coordinates in $(M_0,g_0)$ with the origin at $p$. Then 
\begin{equation}
\label{eq_4_20}
g_0(z)=1+\mathcal{O}(|z|^2),
\end{equation}
see \cite[Chapter 2, Section 8, p. 56]{Petersen_book}, and $dV_{g_0}=|g_0(z)|^{1/2}dz$. Passing to the limit as $s\to \infty$ in \eqref{eq_4_19} and using the rough version of stationary phase Lemma \ref{lem_stationary_phase}, as well as \eqref{eq_4_20}, we obtain that 
\[
(- \hat A_1 (2\lambda, p)+ i\hat A'(2\lambda,p )(\dot{\gamma}(t_0))e^{-2\lambda\text{Re}\,\varphi(p)}|a_{00}(p)|^2|b_{00}(p)|^2=0, 
\]
where $p=\gamma(t_0)$, 
for all $\lambda\in \R$. As $a_{00}(p)\ne 0$, $b_{00}(p)\ne 0$, and $\lambda$ is arbitrary,  we see that 
\[
- A_1 (x_1, p)+ i A'(x_1,p )(\dot{\gamma}(t_0))=0,
\]
which is equivalent to 
\begin{equation}
\label{eq_4_21}
(iA_1, A')(x_1,p)(1,\dot{\gamma}(t_0))=0.
\end{equation}
Here we may replace $\dot{\gamma}(t_0)$ by $-\dot{\gamma}(t_0)$. Thus, \eqref{eq_4_21} gives that $A_1(x_1,p)=0$, and since the point $(x_1,p)$ is an arbitrary point in $\R\times M_0$, we get $A_1\equiv 0$. Hence, we only need to recover the $1$-form $A'(x_1,\cdot)$ on $M_0$ knowing that  
\begin{equation}
\label{eq_4_21_2}
A'(x_1,p)(\dot{\gamma}(t_0))=0.
\end{equation}
To that end, we assume without loss of generality that $v_1=\dot{\gamma}(t_0)=(1,0, \dots, 0)\in \R^{n-1}$, and consider the small perturbations of $v_1$ given by 
\begin{equation}
\label{eq_4_22-new}
v_2=\frac{1}{\sqrt{1+\varepsilon^2}}(1,\varepsilon, 0, \dots, 0), \dots, v_{n-1}=\frac{1}{\sqrt{1+\varepsilon^2}}(1,0, \dots, 0,\varepsilon),
\end{equation}
for $\varepsilon>0$ small. The unit vectors $v_1, \dots, v_{n-1}$ are linearly independent, and thus, they span the tangent space $T_p M_0$. By Proposition \ref{prop_D_geodesics}, for $\varepsilon>0$ sufficiently small, the unit speed geodesic $\gamma_{p,v_j}$, $j=2,\dots, n-1$, through $(p,v_j)$ is non-tangential between boundary points, does not have self-intersections, and intersects $\eta$ at the point $p$ only.  Applying the discussion above with $\gamma=\gamma_{p,v_j}$, we obtain that $A'(x_1,p)(v_j)=0$, $j=2,\dots, n-1$. This together with \eqref{eq_4_21_2} gives that $A'(x_1,p)=0$. The proof of Proposition \ref{prop_density_form}  in the simplified case is complete.

\section{Proof of Proposition \ref{prop_density_form} in the general setting}

\label{sec_general_setting}

In the case of general transversal manifold $M_0$, the non-tangential geodesics $\gamma$ and $\eta$ might have self-intersections and may intersect more than in one point, which complicates the proof. To proceed we shall follow \cite{LLLS} and introduce additional parameters in the construction of harmonic functions. Furthermore, we shall implement the presence of the conformal factor $c$ which is assumed to be equal to $1$ in \cite{LLLS}.

Let us proceed to discuss the choice of two non-tangential geodesics to be used when constructing Gaussian beams quasimodes. When doing so let us first observe that arguing as in the proof of Theorem 1.2 of \cite{Salo_2017}, we may assume that $(M_0,g_0)$ has a strictly convex boundary. An application of 
\cite[Lemma 3.1]{Salo_2017} gives therefore that there exists a nullset $E$ in $(M_0, g_0)$ such that all points in $M_0\setminus E$ lie on some non-tangential geodesic joining boundary points. Fix a point $y_0\in M_0^{\text{int}}\setminus E$ and let $\gamma:[-S_1, S_2]\to M_0$, $0<S_1, S_2<\infty$, be a unit speed non-tangential geodesic such that $\gamma(0)=y_0$. Then by Proposition \ref{prop_geodesic}, moving the point $y_0$ along $\gamma$ a little and reparametrizing the geodesic, if necessary, there exists a small neighborhood $W\subset S_{y_0}M_0$ of $w_0=\dot{\gamma}(0)$ such that for every $w\in W$, $w \neq w_0$,  the unit speed geodesic $\eta:[-T_1,T_2]\to M_0$, $0<T_1,T_2<\infty$, such that $\eta(0)=y_0$ and $\dot{\eta}(0)=w$  is also non-tangential, and $\gamma$ and $\eta$ do not  intersect each other at the boundary of $M_0$. Notice that $\gamma$ and $\eta$ are distinct and are not reverses of each other. As we shall see below, the fact that $\gamma$ and $\eta$  do not  intersect each other at the boundary of $M_0$ allows us to avoid the use of stationary and non-stationary phase on the boundary of $M_0$.

By \cite{LLLS}, we know that $\gamma$ and $\eta$ can intersect only finitely many times.  Let us denote by $p_1, \dots, p_N\in M_0^{\text{int}}$ the distinct intersection points of $\gamma$ and $\eta$. For each $r$, $r=1,\dots, N$, let $t_1^{(r)}<\dots<t_{P_r}^{(r)}$ be the times in $[-T_1,T_2]$ when $\eta(t_j^r)=p_r$, and let $\tau_1^{(r)}<\dots<\tau_{Q_r}^{(r)}$ be the times in  $[-S_1,S_2]$ when $\gamma(\tau_j^{(r)})=p_r$. Let $U_r$ be a  small neighborhood of $p_r$, $r=1,\dots, N$. 

\subsection{Choosing harmonic functions}

First it follows from Proposition \ref{prop_density_Holder} that \eqref{eq_2_1} continues to hold for all harmonic functions $u_j\in C^{2,\alpha}(M)$, $0<\alpha<1$, $j=1,\dots, 4$.

Let $s\ge 1$ and let $L>0$, $\lambda, \mu\in \R$ be fixed. By Proposition \ref{prop_CGO_general_mnfld} and Remark \ref{rem_CGO_general_mnfld}, there are harmonic functions $u_j\in C^3(M)$ of the form, 
\begin{equation}
\label{eq_5_1}
\begin{aligned}
u_1=e^{(s+i\mu)x_1}c^{-\frac{(n-2)}{4}}(v_1+r_1), \quad u_2=\overline{e^{-(s+i\mu)x_1}c^{-\frac{(n-2)}{4}}(v_2+r_2)},\\
u_3=e^{-L(s+i\lambda)x_1}c^{-\frac{(n-2)}{4}}(w_1+r_3), \quad u_4=\overline{e^{L(s+i\lambda)x_1}c^{-\frac{(n-2)}{4}}(w_2+r_4)},
\end{aligned}
\end{equation}
where 
\begin{equation}
\label{eq_5_1_2}
\|r_j\|_{C^1(M)}=\mathcal{O}(s^{-K}),
\end{equation}
as $s\to \infty$, $K\gg 1$, and  $v_j\in C^\infty(\R\times M_0)$, $j=1,2$, and $w_j\in C^\infty(\R\times M_0)$, $j=1,2$, are the Gaussian beam quasimodes constructed in Proposition \ref{prop_Gaussian_beams} and associated to the non-tangential geodesics $\eta$ and $\gamma$, respectively, such that  
\begin{equation}
\label{eq_5_1_3}
\supp(v_j(\cdot;s))\subset \R\times \text{small neigh}(\eta)\quad \supp(w_j(\cdot;s))\subset \R\times \text{small neigh}(\gamma).
\end{equation}
Notice that here we follow  \cite{LLLS}, and the minor differences are as follows: in order to incorporate the presence of the conformal factor our Gaussian beams quasimodes are constructed on all of $\R\times M_0$ rather than on $M_0$ as in  \cite{LLLS}, and the parameters $\mu$ and $\lambda$ are real.

Let us now recall a local description of the quasimodes $v_j$, $w_j$ near the intersection points $p_r$ of $\gamma$ and $\eta$. In doing so, let us fix $p$ to be one of the intersection points $p_r$ and let us set $U=U_r$.  In the open set $U$, the quasimodes $v_j$ are of the form 
\begin{equation}
\label{eq_5_2}
v_j|_U=\sum_{k=1}^P v^{(k)}_j, \quad j=1,2, 
\end{equation}
where $t_1<\dots<t_P$ are the times in $[-T_1,T_2]$ when $\eta(t_k)=p$.  
Each $v_1^{(k)}$, $v_2^{(k)}$ in \eqref{eq_5_2} has the form
\begin{equation}
\label{eq_5_3}
v_1^{(k)}=s^{\frac{n-2}{8}}e^{i (s+i\mu) \varphi^{(k)}}a^{(k)}, \quad v_2^{(k)}=s^{\frac{n-2}{8}}e^{i (s+i\mu) \varphi^{(k)}}b^{(k)}, \quad k=1, \dots, P, 
\end{equation}
where $\varphi=\varphi^{(k)}\in C^\infty(\overline{U};\C)$, satisfying for $t$ close to $t_k$, 
\begin{equation}
\label{eq_5_4}
\varphi(\eta(t))=t, \quad \nabla \varphi(\eta(t))=\dot{\eta}(t), \quad \text{Im}\,(\nabla^2\varphi(\eta(t)))\ge 0, \quad  \text{Im}\,(\nabla^2\varphi)|_{\dot{\eta}(t)^\perp}>0, 
\end{equation}
and $a^{(k)}, b^{(k)}\in C^\infty(\R\times \overline{U})$ is of the form, 
\begin{equation}
\label{eq_5_5}
a^{(k)}(x_1, t,y;s)=\bigg(\sum_{j=0}^N\tau^{-j}a_j^{(k)} \bigg) \chi\bigg(\frac{y}{\delta'}\bigg),\quad b^{(k)}(x_1, t,y;s)=\bigg(\sum_{j=0}^N\tau^{-j}b_j^{(k)} \bigg) \chi\bigg(\frac{y}{\delta'}\bigg),
\end{equation}
where $a^{(k)}_0=b^{(k)}_0$ is independent of $x_1$, 
\begin{equation}
\label{eq_5_6}
a^{(k)}_0(t,y)=a_{00}^{(k)}(t)+\mathcal{O}(|y|), \quad a_{00}^{(k)}(t)\ne 0, \quad  \forall t. 
\end{equation}
Here $(t,y)$ are the Fermi coordinates for $\eta$ for $t$ close to $t_k$, $\chi\in C^\infty_0(\R^{n-2})$ is such that $0\le \chi\le 1$,   $\chi=1$ for $|y|\le 1/4$ and $\chi=0$ for $|y|\ge 1/2$, and $\delta'>0$ is a fixed number that can be taken arbitrarily small. 

Furthermore, in $U$, the quasimodes $w_1$, $w_2$ are finite sums,
\begin{equation}
\label{eq_5_7}
w_j|_U=\sum_{k=1}^Q w^{(k)}_j, \quad j=1,2, 
\end{equation}
where $\tau_1<\dots<\tau_Q$ are the times in $[-S_1,S_2]$ when $\gamma(\tau_k)=p$.  Each $w_1^{(k)}$, $w_2^{(k)}$ in \eqref{eq_5_7}  has the form 
\begin{equation}
\label{eq_5_8}
w_1^{(k)}=s^{\frac{n-2}{8}}e^{Li(s+i\lambda) \psi^{(k)}}c^{(k)}, \quad w_2^{(k)}=s^{\frac{n-2}{8}}e^{Li (s+i\lambda) \psi^{(k)}}d^{(k)}, \quad k=1, \dots, Q, 
\end{equation}
where each $\psi=\psi^{(k)}\in C^\infty(\overline{U};\C)$, satisfying for $\tau$ close to $\tau_k$, 
\begin{equation}
\label{eq_5_9}
\psi(\gamma(\tau))=\tau, \quad \nabla \psi(\gamma(\tau))=\dot{\gamma}(\tau), \quad \text{Im}\,(\nabla^2\psi(\gamma(\tau)))\ge 0, \quad  \text{Im}\,(\nabla^2\psi)|_{\dot{\gamma}(\tau)^\perp}>0, 
\end{equation}
and each $c^{(k)}, d^{(k)}\in C^\infty(\R\times \overline{U})$ is of the form, 
\begin{equation}
\label{eq_5_10}
c^{(k)}(x_1, \tau,z;s)=\bigg(\sum_{j=0}^N\tau^{-j}c_j^{(k)} \bigg) \chi\bigg(\frac{z}{\delta'}\bigg),\ d^{(k)}(x_1, \tau,z;s)=\bigg(\sum_{j=0}^N\tau^{-j}d_j^{(k)} \bigg) \chi\bigg(\frac{z}{\delta'}\bigg),
\end{equation}
where $c^{(k)}_0=d^{(k)}_0$ is independent of $x_1$, 
\begin{equation}
\label{eq_5_11}
c^{(k)}_0(\tau,z)=c_{00}^{(k)}(\tau)+\mathcal{O}(|z|), \quad c_{00}^{(k)}(\tau)\ne 0, \quad  \forall \tau. 
\end{equation}
Here $(\tau,z)$ are the Fermi coordinates for $\gamma$ for $t$ close to $t_k$. 

We also have
\begin{equation}
\label{eq_5_12}
\begin{aligned}
&\|v_j\|_{L^4(M)}=\mathcal{O}(1), \quad \|w_j\|_{L^4(M)}=\mathcal{O}(1), \\
 &\|v_j\|_{L^\infty(M)}=\mathcal{O}(s^{\frac{n-2}{8}}),  \quad \|w_j\|_{L^\infty(M)}=\mathcal{O}(s^{\frac{n-2}{8}}),\\
&\|\nabla v_j\|_{L^4(M)}=\mathcal{O}(s), \quad \|\nabla w_j\|_{L^4(M)}=\mathcal{O}(s),\\\
 &\|\nabla v_j\|_{L^\infty(M)}=\mathcal{O}(s^{\frac{n+6}{8}}), \quad \|\nabla w_j\|_{L^\infty(M)}=\mathcal{O}(s^{\frac{n+6}{8}}),
 \end{aligned}
\end{equation}
as $s\to \infty$, $j=1,2$. 

Now it follows from \eqref{eq_5_1} that 
\begin{equation}
\label{eq_5_13}
u_1u_2u_3=e^{(-Ls+2i\mu-Li\lambda)x_1} c^{-\frac{3(n-2)}{4}}(v_1\overline{v_2}w_1+\tilde R), 
\end{equation}
where 
\[
\tilde R=r_3v_1\overline{v_2}+(w_1+r_3)(v_1\overline{r_2}+\overline{v_2}r_1+r_1\overline{r_2}).
\]
Using \eqref{eq_5_1_2}, \eqref{eq_5_12},  we get 
\begin{equation}
\label{eq_5_13_2}
\|\tilde R\|_{C^1(M)}=\mathcal{O}(s^{-L}),
\end{equation}
where $L$ is large. 
Hence, we have 
\begin{align*}
\p_{x_1}(u_1u_2u_3)=&e^{(-Ls+2i\mu-Li\lambda)x_1} [(-Ls+2i\mu-Li\lambda)c^{-\frac{3(n-2)}{4}}(v_1\overline{v_2}w_1+\tilde R)\\
&+ \p_{x_1}(c^{-\frac{3(n-2)}{4}}) (v_1\overline{v_2}w_1+\tilde R)+ c^{-\frac{3(n-2)}{4}}(\p_{x_1}(v_1\overline{v_2}w_1)+\p_{x_1}\tilde R)],
\end{align*}
and therefore, in view of \eqref{eq_5_13_2},  \eqref{eq_5_1_2}, and \eqref{eq_5_12},  we get 
\begin{equation}
\label{eq_5_14}
\p_{x_1}(u_1u_2u_3)u_4=e^{2i(\mu-L\lambda)x_1}c^{-(n-2)} [-Ls v_1\overline{v_2}w_1\overline{w_2}
+\p_{x_1}(v_1\overline{v_2}w_1)\overline{w_2}]+ \mathcal{O}_{L^1(M)}(1),
\end{equation}
as $s\to \infty$. 
We also have from \eqref{eq_5_13} that 
\begin{align*}
\p_{x_k}(u_1u_2u_3)=&e^{(-Ls+2i\mu-Li\lambda)x_1} [ c^{-\frac{3(n-2)}{4}}(\p_{x_k}(v_1\overline{v_2}w_1)+\p_{x_k}\tilde R)\\
&+\p_{x_k}(c^{-\frac{3(n-2)}{4}})(v_1\overline{v_2}w_1+\tilde R)],
\end{align*}
for $k=2,\dots,n$, and  therefore, in view of \eqref{eq_5_13_2}, \eqref{eq_5_1_2}, and \eqref{eq_5_12}, we get 
\begin{equation}
\label{eq_5_15}
\p_{x_k}(u_1u_2u_3)u_4=e^{2i(\mu-L\lambda)x_1}c^{-(n-2)} \p_{x_k}(v_1\overline{v_2}w_1)\overline{w_2} + \mathcal{O}_{L^1(M)}(1),
\end{equation}
as $s\to \infty$. 

For future reference, we also note that 
\begin{equation}
\label{eq_5_15_2}
u_1u_2u_3u_4=e^{2i(\mu-L\lambda)x_1}c^{-(n-2)} (v_1\overline{v_2}w_1\overline{w_2}+\tilde R w_2 +(v_1\overline{v_2}w_1+\tilde R)\overline{r_2} )=\mathcal{O}_{L^1(M)}(1),
\end{equation}
as $s\to \infty$. 

Using \eqref{eq_5_14} and \eqref{eq_5_15}, we obtain that 
\begin{equation}
\label{eq_5_16}
\begin{aligned}
\langle A, d(u_1u_2u_3)\rangle_{g} u_4=e^{2i(\mu-L\lambda)x_1}c^{1-n}\bigg(A_1(-Ls v_1\overline{v_2}w_1\overline{w_2}
+\p_{x_1}(v_1\overline{v_2}w_1)\overline{w_2})\\
+ \langle A', d_{x'} (v_1\overline{v_2}w_1)\rangle_{g_0}\overline{w_2} \bigg)+\mathcal{O}_{L^1(M)}(1), 
\end{aligned}
\end{equation}
as $s\to \infty$.  

 It follows from \eqref{eq_2_1} in view of \eqref{eq_5_16} that 
\begin{equation}
\label{eq_5_17_0}
\begin{aligned}
\int_{M} \bigg(A_1(-Ls v_1\overline{v_2}w_1\overline{w_2}
+\p_{x_1}(v_1\overline{v_2}w_1)\overline{w_2})+ \langle A', d_{x'} (v_1\overline{v_2}w_1)\rangle_{g_0}\overline{w_2} \bigg)\\e^{2i(\mu-L\lambda)x_1}c^{1-n}dV_g=\mathcal{O}(1),
\end{aligned}
\end{equation}
as $s\to 0$. 
 
Now taking $u_2=u_3=1$ in \eqref{eq_2_1} and applying Proposition \ref{prop_boundary_A}, we obtain that $A|_{\p M}=0$ and $\p_\nu A|_{\p M}=0$. Let us extend $A$ by zero to $(\R\times M_0)\setminus M$ and denote this extension by $A$ again. Since $A\in C^{1,1}(M,T^*M)$ and $A|_{\p M}=0$, $\p_\nu A|_{\p M}=0$, we see that $A\in C^{1,1}(\R\times M_0,T^*(\R\times M_0))$. Now \eqref{eq_5_17_0} implies that 
\begin{equation}
\label{eq_5_17}
\begin{aligned}
\int_{\R\times M_0} \bigg(A_1(-Ls v_1\overline{v_2}w_1\overline{w_2}
+\p_{x_1}(v_1\overline{v_2}w_1)\overline{w_2})+ \langle A', d_{x'} (v_1\overline{v_2}w_1)\rangle_{g_0}\overline{w_2} \bigg)\\e^{2i(\mu-L\lambda)x_1}c^{1-n}dV_g=\mathcal{O}(1),
\end{aligned}
\end{equation}
as $s\to 0$. In view of \eqref{eq_5_1_3}, \eqref{eq_5_17} gives 
\begin{equation}
\label{eq_5_17_1}
\begin{aligned}
\sum_{r=1}^N \int_{\R\times U_r} \bigg(A_1(-Ls v_1\overline{v_2}w_1\overline{w_2}
+\p_{x_1}(v_1\overline{v_2}w_1)\overline{w_2})+ \langle A', d_{x'} (v_1\overline{v_2}w_1)\rangle_{g_0}\overline{w_2} \bigg)\\e^{2i(\mu-L\lambda)x_1}c^{1-n}dV_g=\mathcal{O}(1),
\end{aligned}
\end{equation}
as $s\to 0$, where $U_r$ are sufficiently small neighborhoods of the points $p_r$ of the intersections of $\gamma$ and $\eta$. Using \eqref{eq_5_2},  \eqref{eq_5_3}, \eqref{eq_5_5}, \eqref{eq_5_8}, \eqref{eq_5_10}, we obtain that in $U_r$, 
\begin{equation}
\label{eq_5_18}
\begin{aligned}
v_1\overline{v_2}w_1\overline{w_2}=s^{\frac{n-2}{2}}\sum_{1\le k,l\le P_r}\sum_{1\le m,j\le Q_r} e^{is\Psi_{klmj}^{r}}e^{\Phi_{klmj}^{r}}a_0^{(k),r}\overline{a_0^{(l),r}}c_0^{(m),r}\overline{c_0^{(j),r}}\\
+\mathcal{O}_{L^1(I\times M_0)}(1/s),
\end{aligned}
\end{equation}
where 
\begin{equation}
\label{eq_5_19}
\Psi_{klmj}^{r}=\varphi^{(k),r}-\overline{\varphi^{(l),r}}+L\psi^{(m),r}-L\overline{\psi^{(j),r}},
\end{equation}
\begin{equation}
\label{eq_5_20}
\Phi_{klmj}^{r}=-\mu \varphi^{(k),r}-\mu \overline{\varphi^{(l),r}}-L\lambda \psi^{(m),r}-L\lambda\overline{\psi^{(j),r}},
\end{equation}
and $I\subset\R$ is a bounded interval. Recall that all $a_0^{(k),r}$ and $c^{(m),r}$ are independent of $x_1$.  This fact also implies that 
\begin{equation}
\label{eq_5_21}
\p_{x_1}(v_1\overline{v_2}w_1)\overline{w_2}=\mathcal{O}_{L^1(I\times M_0)}(1/s). 
\end{equation}
 Using  \eqref{eq_5_2}, \eqref{eq_5_3}, \eqref{eq_5_5}, \eqref{eq_5_8}, \eqref{eq_5_10}, we also get in $U_r$, 
\begin{equation}
\label{eq_5_22}
\begin{aligned}
d_{x'} (v_1\overline{v_2}w_1)\overline{w_2} = s^{\frac{n-2}{2}}\sum_{1\le k,l\le P_r}\sum_{1\le m,j\le Q_r} 
is (d\varphi^{(k),r}-d\overline{\varphi^{(l),r}}+L d\psi^{(m),r}) \\
e^{is\Psi_{klmj}^{r}}e^{\Phi_{klmj}^{r}}a_0^{(k),r}\overline{a_0^{(l),r}}c_0^{(m),r}\overline{c_0^{(j),r}}+ \mathcal{O}_{L^1(I\times M_0)}(1)
\end{aligned}
\end{equation}

Substituting \eqref{eq_5_18}, \eqref{eq_5_21}, \eqref{eq_5_22} into \eqref{eq_5_17_1}, using that $dV_g=c^{\frac{n}{2}}dx_1dV_{g_0}$, and dividing \eqref{eq_5_17_1} by $s^{1/2}$, we obtain that 
\begin{equation}
\label{eq_5_23}
s^{\frac{n-1}{2}} \sum_{r=1}^N \sum_{1\le k,l\le P_r}\sum_{1\le m,j\le Q_r} \int_{U_r} B_{klmj}^{r}e^{is\Psi_{klmj}^{r}}dV_{g_0}=\mathcal{O}(s^{-1/2}).
\end{equation}
where 
\begin{equation}
\label{eq_5_24}
\begin{aligned}
B_{klmj}^r=[-L&\widehat{A_1 c^{1-\frac{n}{2}}}(2(\mu-L\lambda),\cdot)+ i \langle \widehat{A' c^{1-\frac{n}{2}}}(2(\mu-L\lambda),\cdot) , \\
&d\varphi^{(k),r}-d\overline{\varphi^{(l),r}}+L d\psi^{(m),r}\rangle_{g_0}] 
e^{\Phi_{klmj}^{r}}a_0^{(k),r}\overline{a_0^{(l),r}}c_0^{(m),r}\overline{c_0^{(j),r}}.
\end{aligned}
\end{equation}
Notice that the occurrence of the factor $s^{\frac{n-1}{2}}$ is natural here, in view of a subsequent application of the stationary phase method, in its rough version, to the integral in the left hand side of \eqref{eq_5_23}.

\subsection{Choosing $L$} The argument below follows \cite{LLLS} closely and is presented here for the completeness and convenience of the reader only.   We  claim that  $L>0$ can be chosen sufficiently large but fixed so that  $d\Psi^r_{klmj}(p_r)=0$ for all points $p_r$, $1\le r\le N$, if and only if $k=l$ and $m=j$.  Indeed, it follows from \eqref{eq_5_19} that 
\begin{equation}
\label{eq_5_25}
\begin{aligned}
\nabla \Psi^r_{klmij}(p_r)= (\nabla \varphi^{(k),r}- \nabla \overline{\varphi^{(l),r}}+L\nabla \psi^{(m),r}-L\nabla\overline{\psi^{(j),r}})(p_r)\\
=\dot{\eta}(t_k^r)- \dot{\eta}(t_l^r) +L\dot{\gamma}(\tau_m^r)-L\dot{\gamma}(\tau_j^r).
\end{aligned}
\end{equation}
If $k=l$ and $m=j$, \eqref{eq_5_25} implies that $\nabla \Psi^r_{klmij}(p_r)=0$ for all $1\le r\le N$.  Now since the geodesic $\gamma$ is non-tangential, and therefore, not closed, we have $\dot{\gamma}(\tau_m^r)-\dot{\gamma}(\tau_j^r)\ne 0$,
for all $m\ne j$ for all $r$, $1\le r\le N$.  Let 
\[
\alpha=\min\{|\dot{\gamma}(\tau_m^r)-\dot{\gamma}(\tau_j^r)|: m\ne j, 1\le m,j\le Q_r,1\le r\le N \}>0. 
\]
Then in view of the fact that $\eta$ is a unit speed geodesic, it follows from \eqref{eq_5_25} that for all $r$, $1\le r\le N$, $m\ne j$, 
\begin{equation}
\label{eq_5_26}
|\nabla \Psi^r_{klmj}(p_r)|\ge L\alpha -2\ge 1, 
\end{equation}
provided that $L\ge \frac{3}{\alpha}$.  Hence, if $d \Psi^r_{klmj}(p_r)=0$ then $m=j$, and therefore, \eqref{eq_5_25} implies that 
\begin{equation}
\label{eq_5_27}
\begin{aligned}
\nabla \Psi^r_{klmk}(p_r)= \dot{\eta}(t_k^r)-\dot{\eta}(t_l^r).
\end{aligned}
\end{equation}
This completes the proof of the claim since $\dot{\eta}(t_k^r)-\dot{\eta}(t_l^r)\ne 0$ for all $k\ne l$ and all $r$, $1\le r\le N$.  

In what follows we choose $L\ge 3/\alpha$. Furthermore, it follows from \eqref{eq_5_26} and \eqref{eq_5_27} that for such $L$, there exists $\beta>0$ such that  
\begin{equation}
\label{eq_5_28}
|\nabla \Psi^r_{klmj}(p_r)|\ge \beta>0,
\end{equation}
for  $(k, l, m,  j)\in \{(k, l, m, j): 1\le k,l\le P_r, 1\le m, j\le Q_r\}\setminus\{(k, l, m,  j): k=l, m=j\}$, $1\le r\le N$. 

Returning  to \eqref{eq_5_23}, we write the integral there as follows, 
\begin{equation}
\label{eq_5_29}
I=s^{\frac{n-1}{2}} \sum_{r=1}^N \sum_{1\le k,l\le P_r}\sum_{1\le m,j\le Q_r} \int_{U_r} B_{klmj}^{r}e^{is\Psi_{klmj}^{r}}dV_{g_0}=\sum_{r=1}^N (I_1^r+I_2^r),
\end{equation}
where 
\begin{equation}
\label{eq_5_30}
\begin{aligned}
I_1^r=s^{\frac{n-1}{2}}  \sum_{1\le k\le P_r}\sum_{1\le m\le Q_r} \int_{U_r} B_{kkmm}^{r}e^{is\Psi_{kkmm}^{r}}dV_{g_0},\\
I_2^r=s^{\frac{n-1}{2}}  \sum_{1\le k\ne l\le P_r}\sum_{1\le m\ne j\le Q_r} \int_{U_r} B_{klmj}^{r}e^{is\Psi_{klmj}^{r}}dV_{g_0}
\end{aligned}
\end{equation}

\subsection{Rough stationary phase calculation}  Here the analysis is concerned with the integrals  $I_1^r$.  It follows from \eqref{eq_5_19} that 
\[
\Psi^{r}_{kkmm}=2i(\text{Im}\, \varphi^{(k),r}+L\text{Im}\, \psi^{(m),r}),
\]
and therefore, 
$d\Psi^{r}_{kkmm}(p_r)=0$, $\Psi^{r}_{kkmm}(p_r)=0$,  and $\text{Im}\,\nabla^2 \Psi^r_{kkmm}(p_r)>0$, where $p_r\in M_0^{\text{int}}$ is the point of intersection of $\gamma$ and $\eta$.  Note that $U_r\subset M_0^{\text{int}}$, and hence, there will be no contributions from the boundary.   

Let us denote by $z=(z_1, \dots, z_{n-1})$ the geodesic normal coordinates in $(M_0,g_0)$ with origin at $p_r$. Writing $dV_{g_0}=|g_0(z)|^{1/2}dz$, applying Lemma \ref{lem_stationary_phase}, and using \eqref{eq_5_24}, \eqref{eq_5_20}, we obtain that 
\begin{equation}
\label{eq_5_31}
\begin{aligned}
\lim_{s\to \infty} s^{\frac{n-1}{2}} &  \int_{U_r} B_{kkmm}^{r}e^{is\Psi_{kkmm}^{r}}dV_{g_0}\\
= \lim_{s\to \infty}& s^{\frac{n-1}{2}}  \int_{\text{neigh}(0, \R^{n-1})} B_{kkmm}^{r}(z) |g_0(z)|^{1/2} e^{is\Psi_{kkmm}^{r}(z)}dz
=C^r_{kkmm} B_{kkmm}^{r}(p_r)\\
&=C^r_{kkmm} [-L\widehat{A_1 c^{1-\frac{n}{2}}}(2(\mu-L\lambda),p_r)+ i L \widehat{A' c^{1-\frac{n}{2}}}(2(\mu-L\lambda),p_r)(\dot{\gamma}(\tau_m^r))]\\
&e^{-2\mu t_k^r-2L\lambda\tau_m^r}|a_{00}^{(k),r}(p_r)|^2|c_{00}^{(m),r}(p_r)|^2,
\end{aligned}
\end{equation}
where 
\[
C^r_{kkmm}=\frac{(2\pi)^{\frac{n-1}{2}}}{(\det  \text{Im}\,\nabla^2 \Psi^r_{kkmm}(p_r))^{1/2} }>0.
\]
Here we also used that 
\[
\varphi^{(k),r}(p_r)=t_k^r,\quad \psi^{(m),r}(p_r)=\tau_m^r.
\]
Thus, we see from \eqref{eq_5_30} and \eqref{eq_5_31} that 
\begin{equation}
\label{eq_5_31_2}
\begin{aligned}
\lim_{s\to \infty} I_1^r= &\sum_{1\le k\le P_r}\sum_{1\le m\le Q_r}C^r_{kkmm} [-L\widehat{A_1 c^{1-\frac{n}{2}}}(2(\mu-L\lambda),p_r)\\
&+ i L \widehat{A' c^{1-\frac{n}{2}}}(2(\mu-L\lambda),p_r)(\dot{\gamma}(\tau_m^r))]
e^{-2\mu t_k^r-2L\lambda\tau_m^r}|a_{00}^{(k),r}(p_r)|^2|c_{00}^{(m),r}(p_r)|^2.
\end{aligned}
\end{equation}

\subsection{Non-stationary phase calculation} 
Here the analysis is concerned with $I_2^r$ in \eqref{eq_5_30}. It follows from \eqref{eq_5_19} that 
\begin{equation}
\label{eq_5_32}
\Psi^r_{klmj}=\tilde \Psi^r_{klmj}+ i\text{Im}\, \varphi^{(k),r}+i\text{Im}\, \varphi^{(l),r}+Li\text{Im}\, \psi^{(m),r}+ Li\text{Im}\, \psi^{(j),r},
\end{equation}
where 
\begin{equation}
\label{eq_5_33}
\tilde \Psi^r_{klmj}=\text{Re}\, \varphi^{(k),r}- \text{Re}\, \varphi^{(l),r}+L \text{Re}\, \psi^{(m),r}-L \text{Re}\, \psi^{(j),r}\in C^\infty
\end{equation}
is real such that $|\nabla \tilde \Psi^r_{klmj}(p_r)|=|\nabla  \Psi^r_{klmj}(p_r)|\ge \beta>0$  provided $L>3/\alpha$ in view of \eqref{eq_5_28}.  

Let us denote by $z=(z_1,\dots, z_{n-1})$ the geodesic normal coordinates in $(M_0,g_0)$ with origin at $p$. Motivated by \eqref{eq_5_24} and \eqref{eq_5_32}, we set 
\begin{align*}
f(z)=[-L&\widehat{A_1 c^{1-\frac{n}{2}}}(2(\mu-L\lambda),z)+ i \langle \widehat{A' c^{1-\frac{n}{2}}}(2(\mu-L\lambda),z) , \\
&d\varphi^{(k),r}-d\overline{\varphi^{(l),r}}+L d\psi^{(m),r}\rangle_{g_0}] 
e^{\Phi_{klmj}^{r}} |g_0(z)|^{1/2}\in C_0^{1,1}(M_0),
\end{align*}
and 
\begin{equation}
\label{eq_5_34_0}
\begin{aligned}
\hat a_0^{(k),r}= s^{\frac{n-2}{8}}e^{-s\text{Im}\, \varphi^{(k),r}}a_0^{(k),r}, \quad  \hat c_0^{(m),r}= s^{\frac{n-2}{8}}e^{-s\text{Im}\, \psi^{(m),r}}c_0^{(m),r}.
\end{aligned}
\end{equation}
Thus, 
\begin{equation}
\label{eq_5_34}
\begin{aligned}
I_{2, klmj}^{r}:=& s^{\frac{n-1}{2}}\int_{U_r} B_{klmj}^r e^{is \Psi^r_{klmj}}dV_g\\
&=s^{\frac{1}{2}}\int_{\text{neigh}(0,\R^{n-1})}  f(z) \hat a_0^{(k),r}\overline{\hat a_0^{(l),r}}\hat c_0^{(m),r}\overline{\hat c_0^{(j),r}}e^{is \tilde \Psi^r_{klmj}(z)}dz.
\end{aligned}
\end{equation}

Note that $f$ is independent of $s$, and 
\begin{equation}
\label{eq_5_35}
\|\hat a_0^{(k),r}\|_{L^4(M_0)}=\mathcal{O}(1), \quad \|\hat c_0^{(m),r}\|_{L^4(M_0)}=\mathcal{O}(1),
\end{equation}
as $s\to \infty$. We next  claim that 
\begin{equation}
\label{eq_5_36}
\|\nabla \hat a_0^{(k),r}\|_{L^4(M_0)}=\mathcal{O}(s^{\frac{1}{2}}), \quad \|\nabla \hat c_0^{(m),r}\|_{L^4(M_0)}=\mathcal{O}(s^{\frac{1}{2}}),
\end{equation}
as $s\to \infty$, see \cite{LLLS}. Let us recall the argument briefly. It is enough to show the first bound in \eqref{eq_5_36}. To that end, we have from \eqref{eq_5_34_0} that 
\begin{equation}
\label{eq_4_10_5}
\nabla \hat a_0^{(k),r}= s^{\frac{n-2}{8}}e^{-s\text{Im}\, \varphi^{(k),r}} ( -s (\nabla \text{Im}\, \varphi^{(k),r})a_0^{(k),r}+ \nabla a_0^{(k),r}).
\end{equation}
It suffices to control the first  term in the right hand side of \eqref{eq_4_10_5}, and to this end we note that in the Fermi coordinates $(t,y)$, associated with the geodesic $\eta$, we have 
\begin{equation}
\label{eq_4_11_5}
|\nabla \text{Im}\, \varphi^{(k),r}(t,y)|=\mathcal{O}(|y|),
\end{equation}
 and 
\begin{equation}
\label{eq_4_12_5}
\text{Im}\,\varphi^{(k),r}(t,y)\ge c|y|^2,
\end{equation}
for some $c>0$, see \eqref{eq_5_4}. 
Thus, using \eqref{eq_4_11_5}  and \eqref{eq_4_12_5}, we get
\begin{align*}
\|s^{\frac{n-2}{8}}e^{-s\text{Im}\, \varphi^{(k),r}} & s (\nabla \text{Im}\, \varphi^{(k),r})a_0^{(k),r} \|_{L^4(M_0)}\\
&=\mathcal{O}(s^{\frac{n-2}{8}}s)\bigg(\int_{|y|\le 1/2}
e^{-4s\text{Im}\, \varphi^{(k),r}} |y|^4 dy \bigg)^{1/4}=\mathcal{O}(s^{\frac{1}{2}}).
\end{align*}
This bound together with \eqref{eq_4_10_5} shows the first bound in \eqref{eq_5_36}.
Similarly to \eqref{eq_5_36}, we also have 
\begin{equation}
\label{eq_5_37}
\|\p^\alpha \hat a_0^{(k),r}\|_{L^4(M_0)}=\mathcal{O}(s^{\frac{|\alpha|}{2}}),\quad \|\p^\alpha \hat c_0^{(m),r}\|_{L^4(M_0)}=\mathcal{O}(s^{\frac{|\alpha|}{2}}),\quad \forall \alpha, 
\end{equation}
as $s\to \infty$.  Furthermore, as $\delta'>0$ can be chosen as small as we wish, we see that $\hat a_0^{(k),r}$, $\hat c_0^{(k),r}$ have compact support in $U_r$. 

Letting 
\[
L=\frac{\nabla \tilde  \Psi^r_{klmij}\cdot \nabla}{i|\nabla \tilde  \Psi^r_{klmij}|^2},
\]
we have $L(e^{is \tilde \Psi^r_{klmij}})= se^{is \tilde  \Psi^r_{klmij}}$. Integrating by parts in \eqref{eq_5_34}, we get 
\[
I_{2,klmj}^r=s^{-\frac{1}{2}} \int_{\text{neigh}(0,\R^{n-1})} e^{is \tilde \Psi^r_{klmj}(z)}
L^t(  f(z) \hat a_0^{(k),r}\overline{\hat a_0^{(l),r}}\hat c_0^{(m),r}\overline{\hat c_0^{(j),r}} )dz,
\]
where $L^t=-L-\div L$. Now in view of \eqref{eq_5_33} and \eqref{eq_5_35}, we see that 
\[
s^{-\frac{1}{2}}\bigg| \int_{\text{neigh}(0,\R^{n-1})} e^{is \tilde \Psi^r_{klmj}(z)}
(\div L) (  f(z) \hat a_0^{(k),r}\overline{\hat a_0^{(l),r}}\hat c_0^{(m),r}\overline{\hat c_0^{(j),r}} )dz\bigg|=\mathcal{O}(s^{-\frac{1}{2}}),
\]
and in view of \eqref{eq_5_36}, 
\[
s^{-\frac{1}{2}}\bigg| \int_{\text{neigh}(0,\R^{n-1})} e^{is \tilde \Psi^r_{klmj}(z)}
   f(z) \nabla (\hat a_0^{(k),r}\overline{\hat a_0^{(l),r}}\hat c_0^{(m),r}\overline{\hat c_0^{(j),r}} )dz\bigg|=\mathcal{O}(1),
\]
as $s\to \infty$. As $f$ is independent of $s$, we see, after one integration by parts in \eqref{eq_5_34}, that $I_{2,klmj}^r=\mathcal{O}(1)$. Since $\nabla f$ is Lipschitz,  we can integrate by parts the second time and using \eqref{eq_5_37}, we get 
\begin{equation}
\label{eq_5_38}
I_{2,klmj}^r=\mathcal{O}(s^{-\frac{1}{2}}),
\end{equation}
as $s\to \infty$.  Notice that it is precisely here that we need that our 1-form $A\in C_0^{1,1}(\R\times M_0,T^*(\R\times M_0))$. 

We get, in view of \eqref{eq_5_30}, \eqref{eq_5_38},
\begin{equation}
\label{eq_5_39}
I_{2}^r=\mathcal{O}(s^{-\frac{1}{2}}),
\end{equation}
as $s\to \infty$.

\subsection{Completion of the proof} Passing to the limit $s\to \infty$ in \eqref{eq_5_23} and using \eqref{eq_5_29}, \eqref{eq_5_30}, \eqref{eq_5_31_2}, and \eqref{eq_5_39}, we obtain that 
\begin{equation}
\label{eq_5_40}
\begin{aligned}
\sum_{r=1}^N \sum_{k=1}^{P_r}\sum_{m=1}^{Q_r}C^r_{kkmm}  [-L\widehat{A_1 c^{1-\frac{n}{2}}}(2(\mu-L\lambda),p_r)+ i L \widehat{A' c^{1-\frac{n}{2}}}(2(\mu-L\lambda),p_r)(\dot{\gamma}(\tau_m^r))]\\
e^{-2\mu t_k^r-2L\lambda\tau_m^r}|a_{00}^{(k),r}(p_r)|^2|c_{00}^{(m),r}(p_r)|^2=0.
\end{aligned}
\end{equation}
Next we would like to determine each term in the sum in \eqref{eq_5_40} separately. To do this, we shall follow \cite{LLLS}. First choosing $\mu=(1-L)\lambda$, we get  
\begin{equation}
\label{eq_5_41}
\begin{aligned}
\sum_{r=1}^N \sum_{k=1}^{P_r}\sum_{m=1}^{Q_r} [-L\widehat{A_1 c^{1-\frac{n}{2}}}(2\lambda(1-2L),p_r)
+ i L \widehat{A' c^{1-\frac{n}{2}}}(2\lambda(1-2L),p_r)(\dot{\gamma}(\tau_m^r))]\\
C^r_{kkmm} e^{-2\lambda [L ( \tau_m^r -  t_k^r) +t_k^r]}|a_{00}^{(k),r}(p_r)|^2|c_{00}^{(m),r}(p_r)|^2=0.
\end{aligned}
\end{equation}
It is shown in  \cite{LLLS}, that for all $L\ge 1$ sufficiently large, 
 \begin{equation}
\label{eq_5_41_2}
 L(\tau_{m_1}^{r_1}- t_{k_1}^{r_1})+t_{k_1}^{r_1}\ne L(\tau_{m_2}^{r_2}- t_{k_2}^{r_2})+t_{k_2}^{r_2},
\end{equation}
when $(r_1,k_1, m_1)\ne (r_2,k_2, m_2)$, and fixing $L\ge \frac{3}{\alpha}$ large enough, we may assume in what follows that \eqref{eq_5_41_2} holds. We shall next need Lemma 5.2 from \cite{LLLS} which can be stated as follows: let $f_1,\dots, f_N \in \mathcal{E}'(\R)$ be such that for some distinct real numbers $a_1,\dots, a_N$, one has 
\[
\sum_{j=1}^N\hat f_j(\lambda)e^{a_j\lambda}=0, \quad \lambda \in \R, 
\]
then $f_1=\dots=f_N=0$.  Applying this result, we get for all $r$, $k$, $m$, $\lambda$, 
\begin{align*}
(-\widehat{A_1 c^{1-\frac{n}{2}}}(2\lambda(1-2L),p_r)
+ i \widehat{A' c^{1-\frac{n}{2}}}(2\lambda(1-2L),p_r)(\dot{\gamma}(\tau_m^r)))\\
C^r_{kkmm} |a_{00}^{(k),r}(p_r)|^2|c_{00}^{(m),r}(p_r)|^2=0,
\end{align*}
and as $C^r_{kkmm}\ne 0$,  $a_{00}^{(k),r}(p_r)\ne 0$, and $c_{00}^{(m),r}(p_r)\ne 0$, we get taking the inverse Fourier  transform in $x_1$, 
\[
- A_1 (x_1,p_r)+ i A' (x_1,p_r)(\dot{\gamma}(\tau_m^r))=0,
\]
for all $x_1\in \R$, $p_r$, and $\tau_m^r$. Since $y_0$ was one of the points $p_r$, and $\gamma(\tau_m^r)=y_0$, we know 
\begin{equation}
\label{eq_5_42}
(i A_1, A')(x_1,y_0)(1, \dot{\gamma}(\tau^r_m))=0. 
\end{equation}
Here we may replace $\dot{\gamma}(\tau^r_m)$ by $-\dot{\gamma}(\tau^r_m)$, and thus, \eqref{eq_5_42} implies that $A_1(x_1,y_0)=0$, for all $x_1\in \R$ and almost all $y_0\in M_0$, and therefore, by continuity,  $A_1\equiv 0$.  Hence, we are left with the recovery of the $1$-form $A'(x_1,\cdot)$ on $M_0$ from the fact that 
\begin{equation}
\label{eq_5_43}
A'(x_1,y_0)(\dot{\gamma}(\tau^r_m))=0.
\end{equation}
To proceed we assume without loss of generality that $v_1:=\dot{\gamma}(\tau^r_m)=(1,0, \dots, 0)\in \R^{n-1}$, and   consider its small perturbations $v_2, \dots, v_{n-1}$ given by \eqref{eq_4_22-new}.  The unit vectors $v_1, \dots, v_{n-1}$ are linearly independent, and therefore, they span the tangent space $T_{y_0} M_0$.  By Proposition \ref{prop_geodesic}, for $\varepsilon>0$ sufficiently small, the unit speed geodesic $\gamma_{y_0,v_j}$, $j=2,\dots, n-1$, through $(y_0,v_j)$ is non-tangential between boundary points, and $\gamma$ and $\gamma_{y_0,v_j}$ do not intersect each other at the boundary of $M_0$.  Applying the discussion above with $\eta=\gamma$ and $\gamma=\gamma_{y_0,v_j}$, we get 
\begin{equation}
\label{eq_5_45}
A'(x_1,y_0)(v_j)=0, \quad j=2,\dots, n-1.
\end{equation}
It follows from \eqref{eq_5_43} and \eqref{eq_5_45} that the $1$-form $A'(x_1,y_0)=0$, and therefore,  $A'\equiv 0$. 
This completes the proof of Proposition \ref{prop_density_form} in the general setting.

In the course of the proof of Proposition \ref{prop_density_form}, we also proved the following result. 

\begin{cor}
\label{cor_density_form}
Let $(M,g)$ be a conformally transversally anisotropic manifold of dimension $n\ge 3$. Let $A\in C^{1,1}(M,T^*M)$ be a $1$-form such that $A|_{\p M}=0$ and $\p_\nu A|_{\p M}=0$. If 
\[
\int_M  \langle A, d(u_1u_2u_3)\rangle_g u_4dV_g=\mathcal{O}(1),
\]
as  $s\to \infty$, for all $u_l\in C^3(M)$, $l=1,\dots, 4$, harmonic of the form \eqref{eq_5_1}, then $A\equiv 0$. 
\end{cor}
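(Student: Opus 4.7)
The plan is to recognize that Corollary \ref{cor_density_form} is essentially what is proved in the body of Proposition \ref{prop_density_form}, with the boundary vanishing now being given as an input instead of being derived. The proof of Proposition \ref{prop_density_form} used the integral identity \eqref{eq_2_1} in exactly two places: once, together with Proposition \ref{prop_boundary_A}, to deduce $A|_{\p M}=0$ and $\p_\nu A|_{\p M}=0$; and then, after the zero extension, to produce the asymptotic identity \eqref{eq_5_17_1}. Since the boundary vanishing is now assumed, the first use is free, and the second use is exactly our $\mathcal{O}(1)$ hypothesis evaluated on the harmonic functions of the form \eqref{eq_5_1}. So the strategy is to extend $A$ by zero to $\R\times M_0\setminus M$, preserving $C^{1,1}$ regularity globally (this is where the hypotheses $A|_{\p M}=0$ and $\p_\nu A|_{\p M}=0$ enter), and then to quote the subsequent asymptotic analysis of Section \ref{sec_general_setting} verbatim.

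Concretely, I would first substitute the harmonic functions \eqref{eq_5_1} associated with a non-tangential pair of geodesics $\gamma, \eta$ on $(M_0, g_0)$, together with the parameters $s, L, \lambda, \mu$, into the hypothesized $\mathcal{O}(1)$ bound. The computations \eqref{eq_5_14}--\eqref{eq_5_16} and the localization \eqref{eq_5_1_3} of the Gaussian beam quasimodes near the intersection points $p_1,\dots,p_N$ of $\gamma$ and $\eta$ turn the hypothesis into exactly the statement \eqref{eq_5_17_1}. From there the expansions \eqref{eq_5_18}--\eqref{eq_5_22}, the split into the stationary and non-stationary contributions $I_1^r, I_2^r$ in \eqref{eq_5_29}--\eqref{eq_5_30}, the rough stationary phase identity \eqref{eq_5_31_2}, and the estimate $I_2^r=\mathcal{O}(s^{-1/2})$ obtained via two integrations by parts in \eqref{eq_5_38}--\eqref{eq_5_39} all carry over unchanged.

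Passing to the limit $s\to\infty$ then yields \eqref{eq_5_40}. Choosing $\mu=(1-L)\lambda$ and fixing $L$ large enough so that both \eqref{eq_5_28} and the pairwise distinctness \eqref{eq_5_41_2} hold, an application of the linear independence of exponentials (Lemma 5.2 of \cite{LLLS}) followed by inverse Fourier transform in $x_1$ gives $A_1\equiv 0$ together with $A'(x_1, y_0)(\dot{\gamma}(\tau_m^r))=0$. Varying the initial direction $\dot{\gamma}(\tau_m^r)$ over the perturbations \eqref{eq_4_22-new}, and invoking Proposition \ref{prop_geodesic} to ensure that each perturbed unit speed geodesic remains non-tangential and admissible for the construction, we obtain that $A'(x_1, y_0)$ annihilates a basis of $T_{y_0}M_0$. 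Since $y_0$ ranges over a set of full measure in $M_0$, continuity of $A'$ forces $A'\equiv 0$.

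The only subtle point — the one which makes the statement nontrivial rather than tautological — is the non-stationary phase step \eqref{eq_5_38}: to gain the needed factor $s^{-1/2}$ one must integrate by parts twice against the non-stationary phase $\tilde\Psi_{klmj}^r$, and this requires the gradient of the zero-extended $A$ to be Lipschitz. The hypothesis $A\in C^{1,1}(M, T^*M)$ combined with $A|_{\p M}=0$ and $\p_\nu A|_{\p M}=0$ is precisely what is needed to make this extension $C^{1,1}$ across $\p M$, and hence this is exactly the regularity threshold built into the statement.
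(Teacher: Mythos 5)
Your proposal is correct and matches the paper's intent exactly: the paper proves the corollary implicitly by noting it was established in the course of the proof of Proposition \ref{prop_density_form}, and you correctly identify the two uses of \eqref{eq_2_1} in that proof (boundary determination via Proposition \ref{prop_boundary_A}, now superseded by the hypothesis, and the asymptotic identity \eqref{eq_5_17_1}, now supplied by the $\mathcal{O}(1)$ bound) and then rerun the remaining analysis unchanged, including the point that the $C^{1,1}$ regularity of the zero extension is exactly what the second integration by parts in \eqref{eq_5_38} requires.
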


\section{Proof of Theorem \ref{thm_main}}

\label{sec_proof_thm_main}

Let $\varepsilon=(\varepsilon_1,\dots, \varepsilon_m)\in \C^m$, $m\ge 3$, and consider the Dirichlet problem \eqref{eq_int_3} with
\[
f=\sum_{k=1}^m \varepsilon_k f_k,\quad f_k\in C^{2,\alpha}(\p M), \quad k=1,\dots, m. 
\] 
Then for all $|\varepsilon|$ sufficiently small, the problem \eqref{eq_int_3}  has a unique small solution $u(\cdot,\varepsilon)\in C^{2,\alpha}(M)$, which depends holomorphically on $\varepsilon\in \text{neigh}(0, \C^m)$, see Theorem \ref{thm_well-posedness}. 

We shall use an induction argument on $m\ge 3$ to show that all the coefficients $A_m$ and $V_m$ in \eqref{eq_int_2_A}, \eqref{eq_int_2_V}, see also \eqref{eq_int_3}, can be determined from the Dirichlet--to--Neumann map $\Lambda_{A,V}$ given in \eqref{eq_int_3_DN}. 

First, let $m=3$, and let us proceed to carry out a third order linearization of the Dirichlet--to--Neumann map. Let $u_j=u_j(x,\varepsilon)$ be the unique small solution of the Dirichlet problem,
\begin{equation}
\label{eq_6_1}
\begin{cases}
-\Delta_g u_j+id^*(\sum_{k=2}^\infty A^{(j)}_k(x)\frac{u_j^k}{k!} u_j)-i \langle \sum_{k=2}^\infty A^{(j)}_k(x)\frac{u_j^k}{k!}, du_j\rangle_g\\
 \, \quad   + \langle \sum_{k=2}^\infty A^{(j)}_k(x)\frac{u_j^k}{k!}, \sum_{k=2}^\infty A^{(j)}_k(x)\frac{u_j^k}{k!}\rangle_g u_j 
 +\sum_{k=3}^\infty V^{(j)}_k(x)\frac{u_j^k}{k!}=0 \text{ in }M,\\
u_j=\varepsilon_1f_1+\varepsilon_2f_2+ \varepsilon_3f_3  \text{ on }\p M,
\end{cases}
\end{equation}
for $j=1,2$. Differentiating \eqref{eq_6_1} with respect to $\varepsilon_l$, $l=1,2,3$, and using that $u_j(x,0)=0$, we get 
\begin{equation}
\label{eq_6_2}
\begin{cases}
-\Delta_g v_j^{(l)}=0 & \text{in } M,\\
v_j^{(l)}=f_l & \text{on }\p M,
\end{cases}
\end{equation}
where $v_j^{(l)}=\p_{\varepsilon_l}u_j|_{\varepsilon=0}$. By the uniqueness and the elliptic regularity for the Dirichlet problem \eqref{eq_6_2}, we have that $v^{(l)}:=v_1^{(l)}=v_2^{(l)}\in C^{2,\alpha} (M)$, $l=1,2,3$, see \cite[Theorem 6.15]{Gil_Tru_book}. 

Applying $\p_{\varepsilon_k}\p_{\varepsilon_l}|_{\varepsilon=0}$, $k,l=1,2,3$, to \eqref{eq_6_1}, we next get 
\begin{equation}
\label{eq_6_3}
\begin{cases}
-\Delta_g w_j^{(k,l)}=0 & \text{in } M,\\
w_j^{(k,l)}=0 & \text{on }\p M,
\end{cases}
\end{equation}
where $w_j^{(k,l)}=\p_{\varepsilon_k}\p_{\varepsilon_l}u_j|_{\varepsilon=0}$, and therefore, $w_j^{(k,l)}=0$ for all $j=1,2$, $k,l=1,2,3$. 
Finally, applying $\p_{\varepsilon_1}\p_{\varepsilon_2}\p_{\varepsilon_3}|_{\varepsilon=0}$ to \eqref{eq_6_1}, we obtain the third order linearization,
\begin{equation}
\label{eq_6_4}
\begin{cases}
-\Delta_g w_j +3i d^*(A_2^{(j)}v^{(1)}v^{2}v^{(3)})- i \langle A_2^{(j)}, d(v^{(1)}v^{(2)}v^{(3)})\rangle_g \\
\quad \quad \quad +V^{(j)}_3 v^{(1)}v^{(2)}v^{(3)} =0  \text{ in } M,\\
w_j=0  \text{ on }\p M,
\end{cases}
\end{equation}
where $w_j=\p_{\varepsilon_1}\p_{\varepsilon_2}\p_{\varepsilon_3}u_j|_{\varepsilon=0}$. Using that 
\begin{equation}
\label{eq_6_4_2}
d^*(Av)=(d^*A)v -\langle A, dv\rangle_g,
\end{equation}
for any $1$-form $A$ and a function $v$, we can rewrite  \eqref{eq_6_4} as follows, 
\begin{equation}
\label{eq_6_5}
\begin{cases}
-\Delta_g w_j - 4 i \langle A_2^{(j)}, d(v^{(1)}v^{(2)}v^{(3)})\rangle_g+ \big(3i d^*(A_2^{(j)}) +V_3^{(j)}\big) v^{(1)}v^{(2)}v^{(3)}=0 \text{ in } M,\\
w_j=0  \text{ on }\p M.
\end{cases}
\end{equation}
The fact that $\Lambda_{A^{(1)}, V^{(1)}}(\varepsilon_1f_1+\varepsilon_2f_2+ \varepsilon_3f_3 )=\Lambda_{A^{(2)}, V^{(2)}}(\varepsilon_1f_1+\varepsilon_2f_2+ \varepsilon_3f_3 )$ for all small $\varepsilon$ and all $f_j\in C^{2,\alpha}(\p M)$
implies that $\p_\nu u_1|_{\p M}= \p_\nu u_2|_{\p M}$. Therefore, an application of $\p_{\varepsilon_1}\p_{\varepsilon_2}\p_{\varepsilon_3}|_{\varepsilon=0}$  yields $\p_\nu w_1|_{\p M}= \p_\nu w_2|_{\p M}$. 
Multiplying \eqref{eq_6_5} by $v^{(4)}\in C^{2,\alpha}(M)$ harmonic in $(M,g)$ and applying Green's formula, we get 
\begin{equation}
\label{eq_6_6}
\int_M \big(4i \langle A, d(v^{(1)}v^{(2)}v^{(3)})\rangle_g v^{(4)}-\big(3i d^*(A) +V\big) v^{(1)}v^{(2)}v^{(3)} v^{(4)} \big)dV_g=0,
\end{equation}
for all $v^{(l)}\in C^{2,\alpha}(M)$ harmonic in $(M,g)$, $l=1,\dots, 4$. Here $A=A_2^{(1)}-A_2^{(2)}$ and $V=V_3^{(1)}-V_3^{(2)}$.  An application of Proposition \ref{prop_boundary_A_V} implies that $A|_{\p M}=0$ and $\p_\nu A|_{\p M}=0$. 

Choosing $v^{(l)}=u_l\in C^3(M)$, $l=1,\dots, 4$, to be harmonic functions of the form \eqref{eq_5_1}, and using \eqref{eq_5_15_2}, we first observe that \eqref{eq_6_6} implies that 
\[
\int_M  \langle A, d(u_1u_2u_3)\rangle_g u_4dV_g=\mathcal{O}(1),
\]
as $s\to \infty$.  By Corollary \ref{cor_density_form},  we get $A\equiv 0$, and therefore, $A_2^{(1)}=A_2^{(2)}$. Substituting $A= 0$ into  \eqref{eq_6_6}, we get 
\[
\int_M V v^{(1)}v^{(2)}v^{(3)} v^{(4)} dV_g=0,
\]
for all harmonic functions $v^{(l)}\in C^{2,\alpha}(M)$, $l=1,\dots, 4$.  Using Proposition \ref{prop_density_potential}, we obtain that $V=0$, and thus, $V_3^{(1)}=V_3^{(2)}$.

Let $m\ge 4$ and assume that 
\begin{equation}
\label{eq_6_7_-1}
A_k=A_k^{(1)}=A_k^{(2)}, \quad k=2, \dots, m-2,\quad  V_k=V_k^{(1)}=V_k^{(2)}, \quad  k=3,\dots, m-1.
\end{equation} 
To show that $A_{m-1}^{(1)}=A_{m-1}^{(2)}$ and $V_m^{(1)}=V_m^{(2)}$, we shall perform the $m$th order linearization of the Dirichlet--to--Neumann map.  To that end, let $u_j=u_j(x,\varepsilon)$ be the unique small solution of the Dirichlet problem,
\begin{equation}
\label{eq_6_7}
\begin{cases}
-\Delta_g u_j+id^*(\sum_{k=2}^\infty A^{(j)}_k(x)\frac{u_j^k}{k!} u_j)-i \langle \sum_{k=2}^\infty A^{(j)}_k(x)\frac{u_j^k}{k!}, du_j\rangle_g\\
 \, \quad   + \langle \sum_{k=2}^\infty A^{(j)}_k(x)\frac{u_j^k}{k!}, \sum_{k=2}^\infty A^{(j)}_k(x)\frac{u_j^k}{k!}\rangle_g u_j 
 +\sum_{k=3}^\infty V^{(j)}_k(x)\frac{u_j^k}{k!}=0 \text{ in }M,\\
u_j=\varepsilon_1f_1+\dots+ \varepsilon_mf_m  \text{ on }\p M,
\end{cases}
\end{equation}
for $j=1,2$.  We would like to apply $\p_{\varepsilon_1}\dots\p_{\varepsilon_m}|_{\varepsilon=0}$ to \eqref{eq_6_7}.  First we observe that 
\[
\p_{\varepsilon_1}\dots\p_{\varepsilon_m} \bigg(id^*(\sum_{k=m}^\infty A^{(j)}_k(x)\frac{u_j^k}{k!} u_j)-i \langle \sum_{k=m}^\infty A^{(j)}_k(x)\frac{u_j^k}{k!}, du_j\rangle_g + \sum_{k=m+1}^\infty V^{(j)}_k(x)\frac{u_j^k}{k!}\bigg)
\]
is a sum of terms, each of them containing positive powers of $u_j$ which vanish when $\varepsilon=0$. The only term in 
the expression for $\p_{\varepsilon_1}\dots\p_{\varepsilon_m} (V^{(j)}_m(x)\frac{u_j^m}{m!})$ which does not contain a positive power of $u_j$ is $V^{(j)}_m(x) \p_{\varepsilon_1}u_j \cdots \p_{\varepsilon_m}u_j$.  Furthermore, the only term in 
the expression for $\p_{\varepsilon_1}\dots\p_{\varepsilon_m} (id^*(A^{(j)}_{m-1}\frac{u_j^m}{(m-1)!} ) )$ which does not contain a positive power of $u_j$ is $mi d^*(A_{m-1}^{(j)} \p_{\varepsilon_1}u_j \cdots \p_{\varepsilon_m}u_j)$. The only terms in $\p_{\varepsilon_1}\dots\p_{\varepsilon_m}  \langle A^{(j)}_{m-1}\frac{u_j^{m-1}}{(m-1)!}, du_j\rangle_g$ which do not contain a positive power of $u_j$ can be written as $ \langle A^{(j)}_{m-1}, d(\p_{\varepsilon_1}u_j \cdots \p_{\varepsilon_m}u_j)\rangle_g$. The expression 
\[
\p_{\varepsilon_1}\dots\p_{\varepsilon_m} \bigg(id^*(\sum_{k=2}^{m-2}A^{(j)}_k(x)\frac{u_j^k}{k!} u_j)-i \langle \sum_{k=2}^{m-2} A^{(j)}_k(x)\frac{u_j^k}{k!}, du_j\rangle_g + \sum_{k=3}^{m-1} V^{(j)}_k(x)\frac{u_j^k}{k!}\bigg)
\]
is independent of $j=1,2$, in view of \eqref{eq_6_7_-1} and the fact that it contains only derivatives of $u_j$ of the form  $\p^k_{\varepsilon_{l_1}, \dots, \varepsilon_{l_k}}u_j|_{\varepsilon=0}$ with $k=1, \dots, m-2$, $\varepsilon_{l_1}, \dots, \varepsilon_{l_k}\in \{ \varepsilon_1, \dots, \varepsilon_m\}$.  Here we use the fact that $\p^k_{\varepsilon_{l_1}, \dots, \varepsilon_{l_k}}u_1|_{\varepsilon=0}= \p^k_{\varepsilon_{l_1}, \dots, \varepsilon_{l_k}}u_2|_{\varepsilon=0}$ for  $k=1, \dots, m-1$, $\varepsilon_{l_1}, \dots, \varepsilon_{l_k}\in \{ \varepsilon_1, \dots, \varepsilon_m\}$. This follows by applying the operators $\p^k_{\varepsilon_{l_1}, \dots, \varepsilon_{l_k}}|_{\varepsilon=0}$ to \eqref{eq_6_7}, using \eqref{eq_6_7_-1}, and the unique solvability of the Dirichlet problem for the Laplacian. 

The terms in the expression for 
\[
\p_{\varepsilon_1}\dots\p_{\varepsilon_m} \bigg( \langle \sum_{k=2}^\infty A^{(j)}_k(x)\frac{u_j^k}{k!}, \sum_{k=2}^\infty A^{(j)}_k(x)\frac{u_j^k}{k!}\rangle_g u_j \bigg)
\]
which do  not contain a positive power of $u_j$, only contain $A_2^{(j)}, \dots, A_{m-3}^{(j)}$, and only derivatives of $u_j$ of the form  $\p^k_{\varepsilon_{l_1}, \dots, \varepsilon_{l_k}}u_j|_{\varepsilon=0}$ with $k=1, \dots, m-4$, $\varepsilon_{l_1}, \dots, \varepsilon_{l_k}\in \{ \varepsilon_1, \dots, \varepsilon_m\}$, which are  independent of $j=1,2$. 

Hence, the $m$th order linearization has the following form,
\begin{equation}
\label{eq_6_8}
\begin{cases}
-\Delta_g w_j +m i d^*(A_{m-1}^{(j)}v^{(1)}\cdots v^{(m)})- i \langle A_{m-1}^{(j)}, d(v^{(1)}\cdots v^{(m)})\rangle_g \\
\quad \quad \quad +V^{(j)}_m v^{(1)}\cdots v^{(m)} =H_m  \text{ in } M,\\
w_j=0  \text{ on }\p M,
\end{cases}
\end{equation}
where $w_j=\p_{\varepsilon_1}\dots\p_{\varepsilon_m}u_j|_{\varepsilon=0}$, and $H_m$ is known and independent of $j=1,2$.  Using \eqref{eq_6_4_2}, \eqref{eq_6_8} can be written as follows, 
\begin{equation}
\label{eq_6_9}
\begin{cases}
-\Delta_g w_j - (m+1) i \langle A_{m-1}^{(j)}, d(v^{(1)}\cdots v^{(m)})\rangle_g\\
\quad \quad \quad + \big(m i d^*(A_{m-1}^{(j)}) +V_m^{(j)}\big) v^{(1)}\cdots v^{(m)}=H_m \text{ in } M,\\
w_j=0  \text{ on }\p M.
\end{cases}
\end{equation}
Proceeding as in the case $m=3$, we see that 
\begin{equation}
\label{eq_6_10}
\int_M \big((m+1)i \langle A, d(v^{(1)}\cdots v^{(m)})\rangle_g v^{(m+1)}-\big(mi d^*(A) +V\big) v^{(1)}\cdots v^{(m+1)} \big)dV_g=0,
\end{equation}
for any $v^{(l)}\in C^{2,\alpha}(M)$ harmonic, $l=1,\dots, m+1$. Here $A=A_{m-1}^{(1)}-A_{m-1}^{(2)}$ and $V=V_{m}^{(1)}-V_m^{(2)}$. 
Setting $v^{(1)}=\dots=v^{(m-3)}=1$ and arguing as in the case $m=3$, we complete the proof of Theorem \ref{thm_main}.

\begin{appendix}

\section{A rough stationary phase argument}

\label{sec_rough_stationary_phase}

We need the following rough version of the stationary phase, see \cite{LLLS}.

\begin{lem} 
\label{lem_stationary_phase}
Let $\Psi\in C^\infty(\R^n;\R)$ be such that 
\begin{equation}
\label{eq_ap_1}
\Psi(0)=0, \quad  \Psi'(0)=0, \quad \Psi''(0)>0.
\end{equation}
Let $V\subset \R^n$ be a sufficiently small neighborhood of zero, and $a\in C(\overline{V})$. Then 
\begin{equation}
\label{eq_ap_2}
\lim_{s\to \infty}s^{\frac{n}{2}}\int_V e^{-s\Psi(z)}a(z)dz=\frac{(2\pi)^{\frac{n}{2}}}{(\emph{\det}  \Psi''(0))^{1/2} }a(0). 
\end{equation}
\end{lem}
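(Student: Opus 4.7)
\textbf{Proof proposal for Lemma \ref{lem_stationary_phase}.} The plan is to perform the standard rescaling $z = y/\sqrt{s}$, use a pointwise Taylor expansion to identify the limiting Gaussian integrand, and invoke dominated convergence, where only the continuity of $a$ is used (not any smoothness).

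First, by shrinking $V$ if necessary, the positive definiteness of $\Psi''(0)$ together with $\Psi(0)=0$ and $\Psi'(0)=0$ yields a constant $c>0$ such that
\[
\Psi(z) \ge c|z|^2 \quad \text{for all } z \in V,
\]
and in addition the Taylor expansion
\[
\Psi(z)=\tfrac{1}{2}z^{T}\Psi''(0)z+\mathcal{O}(|z|^{3})
\]
holds uniformly on $V$. After the substitution $z=y/\sqrt{s}$, which carries $V$ to $\sqrt{s}\,V$ with Jacobian $s^{-n/2}$, the left-hand side of \eqref{eq_ap_2} becomes
\[
s^{\frac{n}{2}}\int_{V}e^{-s\Psi(z)}a(z)\,dz=\int_{\R^{n}} F_{s}(y)\,dy,\qquad F_{s}(y):=\mathbf{1}_{\sqrt{s}V}(y)\,e^{-s\Psi(y/\sqrt{s})}a(y/\sqrt{s}).
\]

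Next, I would identify the pointwise limit and a dominating function. For each fixed $y\in\R^{n}$, one has $y/\sqrt{s}\in V$ for all $s$ sufficiently large, hence $\mathbf{1}_{\sqrt{s}V}(y)\to 1$, and by the Taylor expansion above,
\[
s\,\Psi(y/\sqrt{s})=\tfrac{1}{2}y^{T}\Psi''(0)y+\mathcal{O}(|y|^{3}/\sqrt{s})\;\longrightarrow\;\tfrac{1}{2}y^{T}\Psi''(0)y.
\]
Combined with the continuity of $a$ at $0$, this gives $F_{s}(y)\to a(0)\,e^{-\frac{1}{2}y^{T}\Psi''(0)y}$. Moreover, using $\Psi(z)\ge c|z|^{2}$, one has $s\Psi(y/\sqrt{s})\ge c|y|^{2}$, so
\[
|F_{s}(y)|\le \|a\|_{L^{\infty}(V)}\,e^{-c|y|^{2}},
\]
which is integrable over $\R^{n}$ and independent of $s$.

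The dominated convergence theorem then yields
\[
\lim_{s\to\infty}\int_{\R^{n}}F_{s}(y)\,dy=a(0)\int_{\R^{n}}e^{-\frac{1}{2}y^{T}\Psi''(0)y}\,dy=\frac{(2\pi)^{n/2}}{(\det\Psi''(0))^{1/2}}\,a(0),
\]
where the Gaussian integral is evaluated by diagonalizing the positive definite symmetric matrix $\Psi''(0)$. No step here should present serious difficulty; the only point of care is verifying the uniform quadratic lower bound $\Psi(z)\ge c|z|^{2}$ on a neighborhood, which follows directly from $\Psi''(0)>0$ by continuity of the Hessian and Taylor's theorem with integral remainder.
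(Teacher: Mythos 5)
Your proposal is correct and follows essentially the same route as the paper's proof: Taylor expansion of $\Psi$ to obtain the quadratic lower bound $\Psi(z)\ge c|z|^2$, the rescaling $z\mapsto z/\sqrt{s}$, and dominated convergence with the Gaussian dominator $e^{-c|z|^2}$. Your write-up merely makes the pointwise limit of the rescaled integrand slightly more explicit than the paper does.
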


\begin{proof}
Taylor expanding the phase function $\Psi$ and using \eqref{eq_ap_1}, we get 
\[
\Psi(z)=\frac{1}{2}\Psi''(0)z\cdot z+\mathcal{O}(|z|^3),
\]
and therefore, 
\begin{equation}
\label{eq_ap_3}
\Psi(z)\ge c|z|^2,
\end{equation}
with some $c>0$, for all $z\in V$, a sufficiently small neighborhood of zero. Making the change of variables $z\mapsto s^{1/2}z$ in the integral in \eqref{eq_ap_2} and using the dominated convergence theorem, we obtain that 
\begin{align*}
\lim_{s\to \infty}s^{\frac{n}{2}}\int_V e^{-s\Psi(z)}a(z)dz=\lim_{s\to \infty} \int_{s^{1/2}V}e^{-s\Psi(z/s^{1/2})}a(z/s^{1/2})dz\\
=\bigg(\int_{\R^n} e^{-\frac{1}{2}\Psi''(0)z\cdot z}dz\bigg)a(0)=\frac{(2\pi)^{\frac{n}{2}}}{(\det \Psi''(0))^{1/2} }a(0). 
\end{align*}
Here we use the following consequence of \eqref{eq_ap_3},
\[
|\chi_{s^{1/2}V}e^{-s\Psi(z/s^{1/2})}a(z/s^{1/2})|\le \mathcal{O}(1)e^{-c|z|^2}\in L^1(\R^n),
\]
where $\chi_{s^{1/2}V}$ is the characteristic function of the set $s^{1/2}V$. Thus, \eqref{eq_ap_2} follows. 
\end{proof}

\section{Well-posedness of the Dirichlet problem for a nonlinear magnetic Schr\"odinger equation}

\label{sec_well-posedness}

The purpose of this appendix is to show the well-posedness of  the Dirichlet problem for a nonlinear magnetic Schr\"odinger equation with small boundary data. The argument is standard, see \cite{LLLS}, \cite{Krup_Uhlmann_2}, and is given here for completeness and convenience of the reader.

Let $(M,g)$ be a smooth compact Riemannian manifold of dimension $n\ge 2$ with smooth boundary. 
Let $C^{k,\alpha}(M)$ stand for the H\"older space on $M$, where $k\in \N\cup \{0\}$ and $0<\alpha<1$, see \cite[Appendix A]{Hormander_1976}. 
Let us note that $C^{k,\alpha}(M)$ is an  algebra under pointwise multiplication, and
\begin{equation}
\label{eq_ap2_1}
\|uv\|_{C^{k, \alpha}(M)} \le C\big(\|u\|_{C^{k,\alpha}(M)} \|v\|_{L^\infty(M)}+\|u\|_{L^\infty(M)} \|v\|_{C^{k,\alpha}M)} \big), \quad u,v\in C^{k,\alpha}(M),
\end{equation}
see \cite[Theorem A.7]{Hormander_1976}. 

Consider the Dirichlet problem for the nonlinear magnetic Schr\"odinger operator, 
\begin{equation}
\label{eq_ap2_2}
\begin{cases} L_{A,V}u=0 & \text{in }M, \\
u=f & \text{on } \p M,
\end{cases}
\end{equation}
where $L_{A,V}$ is given in \eqref{eq_int_3_L_AV}. 
Here the $1$-form $A: M\times \C\to T^*M$ and the function $V: M\times \C\mapsto \C$ satisfy the conditions: 
\begin{itemize}
\item[($A$)] the map $\C\ni z\mapsto A(\cdot, z)$ is holomorphic with values in $C^{1,\alpha}(M,T^*M)$, 
the space of $1$-forms with complex valued  $C^{1,\alpha}(M)$ coefficients,
\item[($V_{i}$)] the map  $\C\ni z\mapsto V(\cdot, z)$ is holomorphic with values in $C^{\alpha}(M)$, 
\item[($V_{ii}$)] $V(x,0)=0, \text{ for all } x\in M$. 
\end{itemize}

The condition ($V_{ii}$) guaranties that $u=0$ is a solution to \eqref{eq_ap2_2} when $f=0$. 
It follows from ($A$), ($V_i$), ($V_{ii}$)  that $A$ and $V$ can be expanded into power series 
\begin{equation}
\label{eq_ap2_3}
A(x,z)=\sum_{k=0}^\infty A_k(x)\frac{z^k}{k!}, \quad A_k(x):=\p_z^k A(x,0)\in C^{1,\alpha}(M,T^*M),
\end{equation}
converging in the $C^{1,\alpha}(M,T^*M)$ topology, and 
\begin{equation}
\label{eq_ap2_4}
V(x,z)=\sum_{k=1}^\infty V_k(x)\frac{z^k}{k!}, \quad V_k(x):=\p_z^k V(x,0)\in C^{\alpha}(M),
\end{equation}
converging in the $C^{\alpha}(M)$ topology. We also assume that $A_0\in C^\infty(M,T^*M)$ and $V_1\in C^\infty(M)$. Let us assume furthermore that 

\begin{itemize}
\item[(i)] $0$ is not a Dirichlet eigenvalue of the operator $d^*_{\overline{A_0}}d_{A_0}+V_1$.
\end{itemize}

Under all of the assumptions above, we have the following result. 
\begin{thm}
\label{thm_well-posedness}
There exist $\delta>0$, $C>0$ such that for any $f\in  B_{\delta}(\p M):=\{f\in C^{2,\alpha}(\p M): \|f\|_{C^{2,\alpha}(\p M)}< \delta\}$, the problem \eqref{eq_ap2_2} has a solution $u=u_f\in C^{2,\alpha}(M)$ which satisfies
\[
\|u\|_{C^{2,\alpha}(M)}\le C\|f\|_{C^{2,\alpha}(\p M)}.
\]
The solution $u$ is unique within the class $\{u\in C^{2,\alpha}(M): \|u\|_{C^{2,\alpha}(M)}< C\delta \}$ and
 it depends holomorphically on $f\in B_\delta(\p M)$. Furthermore, the map
\[
B_\delta(\p M)\to C^{1,\alpha}(M), \quad f\mapsto \p_\nu u_f|_{\p M}
\]
is holomorphic.
\end{thm}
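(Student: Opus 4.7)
The plan is to solve the nonlinear Dirichlet problem by a standard holomorphic implicit function theorem argument in Banach spaces. Define the map
\[
F: C^{2,\alpha}(M)\times C^{2,\alpha}(\p M)\to C^{\alpha}(M)\times C^{2,\alpha}(\p M),\quad F(u,f)=\bigl(L_{A,V}u,\; u|_{\p M}-f\bigr).
\]
The first task is to verify that $F$ is well-defined and holomorphic on a neighborhood of $(0,0)$. Each term of $L_{A,V}u$ in \eqref{eq_int_3_L_AV} is controlled, when $u\in C^{2,\alpha}(M)$, by the algebra estimate \eqref{eq_ap2_1} together with the fact that $d$ and $d^*$ drop one order of regularity: the product $A(\cdot,u)u$ and its divergence give a contribution in $C^\alpha(M)$ once $A(\cdot,u)\in C^{1,\alpha}$, which in turn follows from ($A$) and the holomorphic substitution $u\mapsto A(\cdot,u)$ using the Taylor series \eqref{eq_ap2_3} in the $C^{1,\alpha}$-topology. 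The same strategy applies to the map $u\mapsto V(\cdot,u)$ via \eqref{eq_ap2_4}; here it is essential that the algebra inequality yields $\|u^k\|_{C^{1,\alpha}}\le C^{k-1}\|u\|_{C^{1,\alpha}}^k$, so that the series converges in $C^\alpha(M)$ for every $\|u\|_{C^{2,\alpha}}<\infty$ since $V(x,z)$ is entire in $z$. By ($V_{ii}$) one has $V(\cdot,0)=0$, hence $L_{A,V}0=0$ and thus $F(0,0)=0$.

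Next I compute the partial Fréchet derivative $D_uF(0,0)$. A direct differentiation of \eqref{eq_int_3_L_AV}, using $A(\cdot,0)=A_0$ and $\p_zV(\cdot,0)=V_1$ together with $V(\cdot,0)=0$, yields
\[
D_uF(0,0)v=\bigl((d^*_{\overline{A_0}}d_{A_0}+V_1)v,\; v|_{\p M}\bigr),
\]
that is, the Cauchy data operator for the linear magnetic Schr\"odinger equation. By assumption (i), together with standard Schauder theory for this linear operator with $A_0\in C^\infty(M,T^*M)$ and $V_1\in C^\infty(M)$, the Dirichlet problem
\[
(d^*_{\overline{A_0}}d_{A_0}+V_1)v=h\text{ in }M,\qquad v=g\text{ on }\p M
\]
admits, for every $(h,g)\in C^\alpha(M)\times C^{2,\alpha}(\p M)$, a unique solution $v\in C^{2,\alpha}(M)$ depending continuously on $(h,g)$. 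Thus $D_uF(0,0)$ is a Banach space isomorphism.

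With these two ingredients in place, the holomorphic implicit function theorem produces $\delta>0$, $C>0$, and a unique holomorphic map $B_\delta(\p M)\ni f\mapsto u_f\in C^{2,\alpha}(M)$, taking values in a small ball of radius $C\delta$, such that $F(u_f,f)=0$ and $\|u_f\|_{C^{2,\alpha}(M)}\le C\|f\|_{C^{2,\alpha}(\p M)}$. Uniqueness within $\{\|u\|_{C^{2,\alpha}(M)}<C\delta\}$ is exactly the uniqueness part of the implicit function theorem (after shrinking $\delta$ if necessary). Finally, the Dirichlet-to-Neumann map $\Lambda_{A,V}f=\p_\nu u_f|_{\p M}$ is the composition of the holomorphic map $f\mapsto u_f\in C^{2,\alpha}(M)$ with the bounded linear trace $u\mapsto \p_\nu u|_{\p M}$ from $C^{2,\alpha}(M)$ to $C^{1,\alpha}(\p M)$, and is therefore holomorphic.

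The only nontrivial point is the verification that $u\mapsto L_{A,V}u$ is holomorphic from $C^{2,\alpha}(M)$ into $C^\alpha(M)$; once the power-series substitutions $u\mapsto A(\cdot,u)$ and $u\mapsto V(\cdot,u)$ are known to converge in the appropriate Banach topology, the remaining structure is just algebraic operations on H\"older functions and two applications of first-order differential operators, so this step is routine but is where the regularity hypotheses in ($A$), ($V_i$) must be used in a sharp way.
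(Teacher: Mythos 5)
Your proof follows essentially the same route as the paper's: define $F(u,f)=(L_{A,V}u,\,u|_{\p M}-f)$, verify by Cauchy estimates and the H\"older algebra inequality that the power-series substitutions $u\mapsto A(\cdot,u)$ and $u\mapsto V(\cdot,u)$ converge and yield a holomorphic map, compute $\p_uF(0,0)v=(d^*_{\overline{A_0}}d_{A_0}v+V_1v,\,v|_{\p M})$, invoke assumption (i) and linear Schauder theory to see this is an isomorphism, and then apply the holomorphic implicit function theorem in Banach spaces. The only cosmetic difference is the ordering of the arguments in $F$ and the level of detail in verifying holomorphy (the paper explicitly reduces it to local boundedness plus weak holomorphy \`a la P\"oschel--Trubowitz, which you implicitly assume); your argument is correct.
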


\begin{proof}
We shall follow \cite{LLLS}, see also \cite{Krup_Uhlmann_2},  and in order to prove this result we shall rely on the implicit function theorem for holomorphic maps between complex Banach spaces, see \cite[p. 144]{Poschel_Trub_book}. To that end,  we let 
\[
B_1=C^{2,\alpha}(\p M), \quad  B_2=C^{2,\alpha}(M), \quad B_3=C^{\alpha}(M)\times C^{2,\alpha}(\p M),
\]
and introduce the map, 
\begin{equation}
\label{eq_ap2_5}
F:B_1\times B_2\to B_3, \quad F(f, u)=(L_{A,V}u, u|_{\p M}-f). 
\end{equation}
Let us verify that the map $F$ indeed enjoys the mapping properties given in \eqref{eq_ap2_5}. To that end, let $u\in C^{2,\alpha}(M)$ and note first that $-\Delta_g u\in C^{\alpha}(M)$. Let us check that $A(\cdot, u(\cdot))\in C^{1,\alpha}(M, T^*M)$.  By Cauchy's estimates, the coefficients $A_k$ in \eqref{eq_ap2_3} satisfy 
\begin{equation}
\label{eq_ap2_6}
\|A_k\|_{C^{1,\alpha}(M, T^*M)}\le \frac{k!}{R^k}\sup_{|z|=R}\|A(\cdot, z)\|_{C^{1,\alpha}(M, T^*M)}, \quad R>0,
\end{equation}
for all $k=0,1,\dots$.
Using \eqref{eq_ap2_1} and \eqref{eq_ap2_6}, we obtain that 
\begin{equation}
\label{eq_ap2_7}
\bigg\| \frac{A_k}{k!}u^k \bigg\|_{C^{1,\alpha}(M, T^*M)}\le \frac{C^k}{R^k}\|u\|^k_{C^{1,\alpha}(M)}\sup_{|z|=R}\|A(\cdot, z)\|_{C^{1,\alpha}(M, T^*M)},
\end{equation}
for all $k=0,1,\dots$. Choosing $R=2C\|u\|_{C^{1,\alpha}(M)}$, it follows from \eqref{eq_ap2_7} that the series 
$\sum_{k=0}^\infty A_k(x)\frac{u^k}{k!}$ converges in $C^{1,\alpha}(M, T^*M)$, and therefore, $A(\cdot, u(\cdot))\in C^{1,\alpha}(M, T^*M)$.  Similarly, $V(\cdot ,u(\cdot))\in C^\alpha(M)$, see also \cite{Krup_Uhlmann_2}. Hence, using \eqref{eq_int_3_L_AV}, we see that $L_{A,V}u\in C^{\alpha}(M)$. 

We next claim that the map $F$ in \eqref{eq_ap2_5} is holomorphic. To this end, we first note that $F$ is locally bounded as $F$ is continuous in $(f,u)$.  Thus, it suffices to show that $F$
 is weakly holomorphic, see \cite[p. 133]{Poschel_Trub_book}. In doing so, let $(f_0,u_0), (f_1,u_1)\in B_1\times B_2$, and let us prove that the map
\[
\lambda\mapsto F((f_0,u_0)+\lambda(f_1,u_1))
\]
is holomorphic in $\C$ with valued in $B_3$. It suffices to check that the map $\lambda\mapsto A(x, u_0(x)+\lambda u_1(x))$ is holomorphic in $\C$ with values in $C^{1,\alpha}(M, T^*M)$, as the fact that the map $\lambda\mapsto V(x, u_0(x)+\lambda u_1(x))$ is holomorphic in $\C$ with values in $C^{\alpha}(M)$ can be proved similarly, see  also \cite{Krup_Uhlmann_2}.  The holomorphy of $\lambda\mapsto A(x, u_0(x)+\lambda u_1(x))$ follows from the fact that in view of \eqref{eq_ap2_7}, the series 
\[
\sum_{k=0}^\infty \frac{A_k}{k!} (u_0+\lambda u_1)^k,
\]
converges in $C^{1,\alpha}(M, T^*M)$, locally uniformly in $\lambda\in \C$. 

We have $F(0,0)=0$ and the partial differential $\p_u F(0,0):B_2\to B_3$ is given by 
\[
\p_uF(0,0)v=(d^*_{\overline{A_0}}d_{A_0} v+V_1v, v|_{\p M}). 
\]
By the assumption (i), we have that the map $\p_u F(0,0):B_2\to B_3$ is a linear isomorphism, see \cite[Theorem 6.15]{Gil_Tru_book}. 

An application of the implicit function theorem, see \cite[p. 144]{Poschel_Trub_book}, allows us to conclude that there exists  $\delta>0$ and a unique holomorphic map $S: B_\delta(\p M)\to C^{2,\alpha}(M)$ such that $S(0)=0$ and $F(f, S(f))=0$ for all $f\in B_\delta(\p M)$. Setting $u=S(f)$ and noting that $S$ is Lipschitz continuous with $S(0)=0$, we see that
\[
\|u\|_{C^{2,\alpha}(M)}\le C\|f\|_{C^{2,\alpha}(\p M)}.
\]
The proof is complete.
\end{proof}

\section{First order boundary determination of potentials}

\label{app_boundary_determination}

When proving Theorem \ref{thm_main} and Proposition \ref{prop_density_form}, an important step consists in determining 
the boundary values, as well as the normal derivatives, of a scalar function and a $1$--form, via suitable orthogonality relations involving harmonic functions on the manifold. The purpose of this section is to carry our this step. In doing so, we shall rely on the methods developed in \cite{Brown_2001}, \cite{Brown_Salo_2006}, with suitable modifications in \cite[Appendix]{Guill_Tzou_2011}, where the boundary values of a scalar potential and a vector field are recovered. The main contribution of this section is that we push the methods a little further, in order to recover the first order normal derivatives of the potential and the $1$--form under limited regularity assumptions,  see also \cite{Aless_deHoop_Gaburro_Sincich_2018}.  We would like to mention the works 
\cite{Brown_Salo_2006}, \cite[Appendix]{Garcia_Zhang_2016}, where the gradient of a $C^1$--conductivity at the boundary of a Euclidean domain is recovered, see also \cite{Alessandrini_1990},  \cite{Caro_Andoni_2017}, \cite{Caro_Merono}. We refer to  \cite{Kohn_Vogelius_1984}, \cite{Lee_Uhlmann}, \cite{Nakamura_Sun_Uhlmann}, \cite{Sylvester_Uhlmann_1988}, where the entire Taylor series at the boundary of $C^\infty$--coefficients are recovered.

To proceed, we shall need the following density result for space of $L^2$ harmonic functions, see also \cite[Corollary 2.14]{Choe_Koo_yi_2004}  for a different approach in the Euclidean setting. 
\begin{prop}
\label{prop_density} 
Let $(M,g)$ be smooth compact Riemannian manifold of dimension $n\ge 2$ with smooth boundary. 
The set of harmonic functions on $M^\text{int}$ that are smooth up to the boundary is dense in the space of $L^2$-harmonic functions, in the $L^2$--topology. 
\end{prop}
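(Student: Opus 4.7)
\textbf{Proof plan for Proposition \ref{prop_density}.}

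By the Hahn--Banach theorem, it suffices to show that any $f\in L^2(M)$ which is orthogonal (in $L^2$) to every harmonic function smooth up to the boundary is automatically orthogonal to every $v\in L^2(M)$ with $\Delta_g v=0$ in $M^{\text{int}}$ in the distributional sense. So I would fix such an $f$ and proceed in three steps.

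First, since $0$ is not a Dirichlet eigenvalue of $-\Delta_g$, the Dirichlet problem $\Delta_g w=f$ in $M^{\text{int}}$ with $w|_{\pM}=0$ has a unique solution $w\in H^2(M)\cap H^1_0(M)$, with $\p_\nu w|_{\pM}\in H^{1/2}(\pM)\subset L^2(\pM)$ well defined. For every harmonic $u\in C^\infty(M)$, Green's formula (valid because $w\in H^2(M)$ has the requisite trace and conormal trace and $u$ is smooth up to $\pM$) together with $\Delta_g u=0$ and $w|_{\pM}=0$ gives
\[
0=(f,u)_{L^2(M)}=(\Delta_g w,u)_{L^2(M)}=\int_{\pM}(\p_\nu w)\,\overline{u}\,dS.
\]
Since the Dirichlet problem for $\Delta_g$ is solvable with smooth boundary data producing smooth-up-to-boundary harmonic functions, the traces $u|_{\pM}$ range over a set dense in $L^2(\pM)$ (in fact they include all of $C^\infty(\pM)$). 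Hence $\p_\nu w=0$ on $\pM$.

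Second, $w\in H^2(M)$ with $w|_{\pM}=0$ and $\p_\nu w|_{\pM}=0$ means $w\in H^2_0(M^{\text{int}})$, the closure of $C_c^\infty(M^{\text{int}})$ in the $H^2$-norm. Choose therefore a sequence $\phi_n\in C_c^\infty(M^{\text{int}})$ with $\phi_n\to w$ in $H^2(M)$; in particular $\Delta_g\phi_n\to\Delta_g w=f$ in $L^2(M)$.

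Third, for any $v\in L^2(M)$ with $\Delta_g v=0$ in $M^{\text{int}}$ in the distributional sense,
\[
(f,v)_{L^2(M)}=\lim_{n\to\infty}(\Delta_g\phi_n,v)_{L^2(M)}=\lim_{n\to\infty}\int_M\overline{v}\,\Delta_g\phi_n\,dV_g=0,
\]
the last equality because $\phi_n\in C_c^\infty(M^{\text{int}})$ is an admissible test function for the distributional equation $\Delta_g v=0$ (the formal $L^2$-self-adjointness of $\Delta_g$ giving $\int\overline{v}\,\Delta_g\phi_n=\int\overline{\Delta_g v}\,\phi_n=0$). This is what we wanted.

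The only step with any substance is the first one, where one must identify the obstruction to density as a single boundary datum $\p_\nu w$ and kill it using the orthogonality hypothesis. The passage from $\mathcal{H}^\infty(M)$-orthogonality to $\mathcal{H}^2(M)$-orthogonality then reduces to the clean fact that $H^2_0$ functions can be approximated in $H^2$ by compactly supported smooth ones, after which the distributional harmonicity of $v$ closes the argument without requiring any boundary trace of $v$ itself — which is important since $L^2$-harmonic functions need not have boundary traces.
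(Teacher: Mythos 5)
Your proof is correct, and it takes a genuinely different route from the paper's. The paper argues directly: given an $L^2$-harmonic $u$, it uses partial hypoellipticity of $\Delta_g$ to produce a boundary trace $u|_{\pM}\in H^{-1/2}(\pM)$, approximates that trace by smooth boundary data $f_j$, solves the Dirichlet problem to get smooth harmonic $u_j$, and invokes continuity of the Poisson operator $H^{-1/2}(\pM)\to L^2(M)$ (Eskin, Theorem 26.3) to conclude $u_j\to u$ in $L^2$. Your argument is the dual one: you annihilate the obstruction, showing that any $f\perp_{L^2}\{\text{smooth harmonics}\}$ also annihilates all distributional $L^2$-harmonics, by solving $\Delta_g w=f$, $w|_{\pM}=0$, killing $\p_\nu w$ via Green's identity against smooth harmonics, and then exploiting $w\in H^2_0(M^{\text{int}})$ to approximate by compactly supported test functions. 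What your approach buys is that it sidesteps the low-regularity boundary-value theory entirely — you never need to define a trace for a general $L^2$-harmonic $v$ (and your distributional pairing needs none), and the only elliptic input is standard $H^2$ regularity for the Dirichlet problem plus the classical characterization of $H^2_0$ by vanishing Dirichlet and Neumann traces; the paper's proof, by contrast, is constructive and actually exhibits the approximating sequence. One small inaccuracy in your closing remark: $L^2$-harmonic functions \emph{do} admit boundary traces, just only in $H^{-1/2}(\pM)$ (this is what partial hypoellipticity gives, and it is exactly what the paper uses) — the advantage of your proof is not that the trace fails to exist, but that you never have to invoke it.
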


\begin{proof}
Let $u\in L^2(M)$ be harmonic, i.e. $-\Delta_g u=0$ in $M^{\text{int}}$. Then by the partial hypoellipticity of the Laplacian, see \cite[Theorem 26.1]{Eskin_book}, we have $f=u|_{\p M}\in H^{-\frac{1}{2}}(\p M)$.  There exists therefore a sequence $f_j\in C^\infty(\p M)$, $j=1,2,\dots$, such that $\|f_j-f\|_{H^{-\frac{1}{2}}(\p M)}\to 0$ as $j\to\infty$.  The Dirichlet problem
\[
\begin{cases}
-\Delta_{g}u_j=0 & \text{in}\quad M^{\text{int}},\\
u_j|_{\p M}=f_j,
\end{cases}
\]
has a unique solution $u_j\in H^1(M)$, and by the boundary elliptic regularity, $u_j\in C^\infty(M)$.  By \cite[Theorem 26.3]{Eskin_book}, we get  
\[
\|u_j-u\|_{L^2(M)}\le C\|f_j-f\|_{H^{-\frac{1}{2}}(\p M)}\to 0,
\]
as $j\to \infty$, establishing the proposition. 
\end{proof}

Our first boundary determination result is as follows. 
\begin{prop}
\label{prop_boundary_V}
Let $(M,g)$ be a conformally transversally anisotropic manifold of dimension $n\ge 3$, and  let $V\in C^{1,1}(M)$.   If
\begin{equation}
\label{eq_boun_1}
\int_M V u_1 u_2 dV_g=0,
\end{equation}
for all harmonic functions $u_1, u_2\in C^\infty(M)$,   then $V|_{\p M}=0$ and $\p_\nu V|_{\p M}=0$.
\end{prop}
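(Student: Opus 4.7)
The strategy is a boundary-localization argument in the spirit of \cite{Brown_2001, Brown_Salo_2006}, adapted to extract both the boundary trace and the first normal derivative of $V$ simultaneously. Fix an arbitrary point $x_0\in\partial M$ and introduce boundary normal coordinates $(x',x_n)$ centered at $x_0$, in which $\partial M$ is locally $\{x_n=0\}$, $M$ lies in $\{x_n\geq 0\}$, the outer unit normal is $\nu=-\partial_{x_n}$, and $g=dx_n^2+g_{\alpha\beta}(x',x_n)\,dx^\alpha dx^\beta$. Fix a covector $\xi\in T^*_{x_0}\partial M$ of unit length in the induced metric $g_0(x_0)$.

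For $\tau\geq 1$ and any prescribed order $N$, construct a pair of exact harmonic functions $u^{\pm}_\tau\in C^\infty(M)$ of Brown / Gaussian-beam type concentrated at $x_0$, of the form
\[
u^{\pm}_\tau(x)=e^{-\tau x_n\pm i\tau\,\xi\cdot x'}\,\eta(x')\,a_\pm(x;\tau)+r^{\pm}_\tau(x),
\]
where $\eta\in C^\infty_0(\R^{n-1})$ is a cutoff supported in a small neighborhood of the origin with $\eta(0)=1$, the amplitudes $a_\pm$ solve the associated eikonal and transport equations so that the WKB residual obeys $\|\Delta_g(e^{-\tau\phi_\pm}\eta a_\pm)\|_{L^2(M)}=\mathcal{O}(\tau^{-N})$, and the remainder $r^{\pm}_\tau$ is obtained by solving a Dirichlet problem for $\Delta_g$; the standard semiclassical solvability of Proposition \ref{prop_solvability} applied with $q=0$ yields $\|r^{\pm}_\tau\|_{L^\infty(M)}=\mathcal{O}(\tau^{-N})$ for any $N$. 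The decisive feature of this choice is that the product $u^+_\tau u^-_\tau=e^{-2\tau x_n}\eta(x')^2 a_+(x;\tau)a_-(x;\tau)+\mathcal{O}_{L^1(M)}(\tau^{-N})$ is \emph{non-oscillatory}, with exponential concentration at $x_n=0$.

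Inserting $u_1=u^+_\tau$, $u_2=u^-_\tau$ into \eqref{eq_boun_1} and using $V\in C^{1,1}(M)$ to expand $V(x',x_n)=V(x',0)+x_n\,\partial_{x_n}V(x',0)+\mathcal{O}(x_n^2)$, together with the identity $\int_0^\infty e^{-2\tau x_n}x_n^k\,dx_n=k!/(2\tau)^{k+1}$, one obtains the two-term asymptotic expansion
\[
\int_M V\,u^+_\tau u^-_\tau\,dV_g=\frac{1}{2\tau}\int_{\partial M}V(\cdot,0)\,\eta^2 b_0\,dS+\frac{1}{(2\tau)^2}\int_{\partial M}\partial_{x_n}V(\cdot,0)\,\eta^2 b_1\,dS+\mathcal{O}(\tau^{-3}),
\]
where $b_0,b_1$ are smooth, strictly positive functions in a neighborhood of $x_0$ determined by the leading amplitudes and the metric on $\partial M$. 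Since the left-hand side vanishes for every $\tau$, multiplying by $2\tau$ and letting $\tau\to\infty$ forces $\int_{\partial M}V(\cdot,0)\eta^2 b_0\,dS=0$; varying $\eta$ over all bumps supported near $x_0$ yields $V(x_0)=0$, and since $x_0\in\partial M$ was arbitrary, $V|_{\partial M}\equiv 0$. With this in hand the leading term in the expansion is of order $\tau^{-2}$, so multiplying by $(2\tau)^2$ and passing to the limit yields $\partial_{x_n}V(x_0)=0$, i.e.\ $\partial_\nu V(x_0)=0$, for every $x_0\in\partial M$.

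\textbf{Main obstacle.} The delicate technical point is the justification of the two-term asymptotic expansion: both the subleading WKB terms in the amplitudes $a_\pm$ and the remainder $r^{\pm}_\tau$ must be controlled finely enough not to contaminate the coefficient of $\tau^{-2}$. The hypothesis $V\in C^{1,1}(M)$ is precisely what is needed to guarantee that the Taylor remainder in $V$ contributes only $\mathcal{O}(\tau^{-3})$, cleanly separating the normal-derivative term from the error. This pushes the standard boundary-trace construction one order further than is required for the mere recovery of $V|_{\partial M}$, and is where the main technical work lies.
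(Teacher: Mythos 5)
Your overall strategy is the same as the paper's: localize at a boundary point $x_0\in\partial M$ with boundary normal coordinates, build near-harmonic functions whose product concentrates at $x_0$ with exponential decay $e^{-2\tau x_n}$ in the normal direction, insert into \eqref{eq_boun_1}, Taylor expand $V$ in $x_n$, and read off $V(x_0)$ and then $\partial_\nu V(x_0)$ from successive orders in $\tau$. The implementation, however, has a gap that would make the argument fail as written.

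\textbf{The gap.} You use a fixed cutoff $\eta(x')$ together with the linear phase $\phi_\pm=x_n\mp i\xi\cdot x'$ and claim $\|\Delta_g(e^{-\tau\phi_\pm}\eta a_\pm)\|_{L^2(M)}=\mathcal{O}(\tau^{-N})$. But the complex eikonal equation $\langle d\phi,d\phi\rangle_g=0$ fails for this linear phase: in boundary normal coordinates with $g^{\alpha\beta}(x',0)=\delta^{\alpha\beta}+\mathcal{O}(|x'|^2)$ and $g^{\alpha\beta}(x',x_n)=g^{\alpha\beta}(x',0)+\mathcal{O}(x_n)$ one has
\[
\langle d\phi_\pm,d\phi_\pm\rangle_g=1-g^{\alpha\beta}(x)\xi_\alpha\xi_\beta=\mathcal{O}(|x'|^2)+\mathcal{O}(x_n),
\]
which is $\mathcal{O}(1)$ on the fixed support of $\eta$. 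This produces a term of order $\tau^2$ in $\Delta_g(e^{-\tau\phi_\pm}\eta a_\pm)$ that no choice of amplitudes can remove, so the claimed quasimode bound and the ensuing $\|r^\pm_\tau\|_{L^\infty}=\mathcal{O}(\tau^{-N})$ do not hold. The phrase ``the amplitudes $a_\pm$ solve the associated eikonal and transport equations'' is internally inconsistent: an eikonal equation is solved by a phase, not by amplitudes, and you have fixed the phase to be linear. Either you would need a genuine complex-valued solution $\phi$ of the eikonal equation near $x_0$ (a Gaussian-beam-at-the-boundary construction, with $\mathrm{Re}\,\phi>0$ away from $x_0$ giving the transverse localization at scale $\tau^{-1/2}$), which you do not carry out, or you need a shrinking cutoff.

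\textbf{How the paper avoids this.} The paper's proof uses $v_0(x)=\eta(x/\lambda^\alpha)e^{i(\tau'\cdot x'+ix_n)/\lambda}$ with a cutoff \emph{shrinking} at rate $\lambda^\alpha$, $\frac13\le\alpha\le\frac12$, so that on the support $|x'|^2=\mathcal{O}(\lambda^{2\alpha})$; the metric-correction term in $\Delta_g v_0$ is then controllable in $H^{-2}(M^{\mathrm{int}})$ (see \eqref{eq_boun_18}). Crucially, the paper does not need and does not prove any $\mathcal{O}(\tau^{-N})$ bound: the Carleman estimate with gain of two derivatives gives only $\|r\|_{L^2(M)}=\mathcal{O}(\lambda^{\alpha(n-3)/2+3/2})$ (equation \eqref{eq_boun_21}), which is small \emph{relative to} $\|v_0\|_{L^2(M)}=\mathcal{O}(\lambda^{\alpha(n-1)/2+1/2})$ by a factor $\lambda^{1-\alpha}$, and this (plus taking $u_2=\overline{u_1}$ so the product is $|v_0+r|^2$ and Cauchy--Schwarz applies cleanly) is all that is used. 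Moreover, for the second step recovering $\partial_\nu V$, the paper needs $\alpha<\frac12$ (strictly) in \eqref{eq_boun_30} to make the $|r|^2$ contribution $o(1)$, a constraint your proposal does not see because it assumes (incorrectly) that $r$ is negligible to any order. If you want to salvage your construction, replace the linear phase by a solution of the complex eikonal equation and verify the transverse Gaussian localization; otherwise adopt the shrinking-cutoff device and the $H^{-2}$-Carleman remainder bound as in the paper.
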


\begin{proof}

First, by Proposition \ref{prop_density}, we see that \eqref{eq_boun_1} continues to hold for all harmonic functions $u_1,u_2\in L^2(M)$. To proceed, we shall follow \cite{Brown_2001}, \cite{Brown_Salo_2006}, constructing a family of functions, whose boundary values have a highly oscillatory behavior while becoming increasingly concentrated near a given point on the boundary of $M$.  To convert such functions to harmonic functions, we follow the idea of \cite[Appendix]{Guill_Tzou_2011} and rely on a Carleman estimate for the conjugated Laplacian with a gain of two derivatives, established in \cite[Lemma 2.1]{Salo_Tzou_2009} in the Euclidean case and in \cite[Proposition 2.2]{Krup_Uhlmann_magn_2018} in the conformally transversally anisotropic case.

Let $x_0\in \p M$ and let $(x_1,\dots, x_n)$ be the boundary normal coordinates centered at $x_0$ so that in these coordinates, $x_0 =0$, the boundary $\p M$ is given by $\{x_n=0\}$, and $M^{\text{int}}$ is given by $\{x_n > 0\}$. We have, see \cite{Lee_Uhlmann},   
\begin{equation}
\label{eq_boun_2_metric}
g(x',x_n)=\sum_{\alpha,\beta=1}^{n-1}g_{\alpha\beta}(x)dx_\alpha dx_\beta+(dx_n)^2,
\end{equation}
and we may also assume that the coordinates $x' = (x_1, \dots, x_{n-1})$ are chosen so that 
\begin{equation}
\label{eq_boun_2}
g^{\alpha\beta}(x',0)=\delta^{\alpha\beta}+\mathcal{O}(|x'|^2), \quad 1\le \alpha,\beta\le n-1,
\end{equation}
see \cite[Chapter 2, Section 8, p. 56]{Petersen_book}. 
Therefore, 
\begin{equation}
\label{eq_boun_17}
g^{\alpha\beta}(x',x_n)=g^{\alpha\beta}(x',0)+\mathcal{O}(x_n)=\delta^{\alpha\beta}+\mathcal{O}(|x'|^2)+\mathcal{O}(x_n). 
\end{equation}

In view of \eqref{eq_boun_2}, we have
\begin{equation}
\label{eq_boun_2_lap}
-\Delta_g=D_{x_n}^2+\sum_{\alpha,\beta=1}^{n-1}g^{\alpha\beta}(x)D_{x_\alpha}D_{x_\beta}+f(x)D_{x_n}+R(x,D_{x'}),
\end{equation}
where 
$f$ is a smooth function and $R$ is a differential operator of order $1$ in $x'$ with smooth coefficients, see  \cite{Lee_Uhlmann}.  Notice that in the local coordinates, $T_{x_0}\p M=\R^{n-1}$, equipped with the Euclidean metric. 
The unit tangent vector $\tau$ is then given by $\tau=(\tau',0)$ where $\tau'\in \R^{n-1}$, $|\tau'|=1$.   Associated to the tangent vector $\tau'$ is the covector $\xi'_\alpha=\sum_{\beta=1}^{n-1} g_{\alpha \beta}(0) \tau'_\beta=\tau'_\alpha\in T^*_{x_0}\p M$. 

Let $\eta\in C^\infty_0(\R^n;\R)$ be such that $\supp(\eta)$ is in a small neighborhood of $0$, and 
\begin{equation}
\label{eq_boun_4}
\int_{\R^{n-1}}\eta(x',0)^2dx'=1.
\end{equation}
Let $\frac{1}{3}\le \alpha\le \frac{1}{2}$. Following \cite{Brown_Salo_2006}, in the boundary normal coordinates,  we set 
\begin{equation}
\label{eq_boun_5}
v_0(x)=\eta\bigg(\frac{x}{\lambda^{\alpha}}\bigg)e^{\frac{i}{\lambda}(\tau'\cdot x'+ ix_n)}, \quad 0<\lambda\ll 1,
\end{equation}
so that  $v_0\in C^\infty(M)$, with $\supp(v_0)$ in  $\mathcal{O}(\lambda^{\alpha})$ neighborhood of $x_0=0$. Here $\tau'$ is viewed as a covector.   

A direct computation 
\begin{equation}
\label{eq_boun_10}
\begin{aligned}
\|v_0\|_{L^2(M)}^2=\mathcal{O}(1)\int_{|x|\le c\lambda^{\alpha},x_n\ge 0} e^{-\frac{2x_n}{\lambda}}dx'dx_n=\mathcal{O}(\lambda^{\alpha(n-1)})\int_0^\infty e^{-2t}\lambda dt \\
=\mathcal{O}(\lambda^{\alpha(n-1)+1}),
\end{aligned}
\end{equation}
as $\lambda\to 0$, shows that 
\begin{equation}
\label{eq_boun_6}
\|v_0\|_{L^2(M)}=\mathcal{O}(\lambda^{\frac{\alpha(n-1)}{2}+\frac{1}{2}}). 
\end{equation}

Following \cite[Appendix]{Guill_Tzou_2011}, we shall construct a harmonic function $u\in L^2(M)$ of the form 
\[
u=v_0+r,
\] 
and therefore, we need to find $r\in L^2(M)$ satisfying 
\begin{equation}
\label{eq_boun_6_remainder}
\Delta_g r=-\Delta_g v_0\quad \text{in}\quad M^{\text{int}}.
\end{equation}
To that end, we shall rely on the following Carleman estimate for the conjugated Laplacian with a gain of two derivatives established in \cite[Lemma 2.1]{Salo_Tzou_2009}, \cite[Proposition 2.2]{Krup_Uhlmann_magn_2018}: for all $0<h\ll 1$ and all $v\in C^\infty_0(M^{\text{int}})$, we have 
\begin{equation}
\label{eq_boun_6_Carleman}
\|v\|_{H^2_{\text{scl}}(M^{\text{int}})}\le \frac{C}{h} \| e^{\frac{\varphi}{h}}\circ (-h^2\Delta_g)\circ e^{-\frac{\varphi}{h}} v \|_{L^2(M)}.
\end{equation}
Here the limiting Carleman weight $\varphi(x)=x_1$. Using a standard argument, one can convert the Carleman estimate \eqref{eq_boun_6_Carleman} into a solvability result. Applying this solvability result with $h>0$ small but fixed, we see that there exists a solution $r\in L^2(M)$ of \eqref{eq_boun_6_remainder} such that 
\begin{equation}
\label{eq_boun_6_remainder_bound}
\|r\|_{L^2(M)}\le C\|\Delta_g v_0\|_{H^{-2}(M^{\text{int}})}. 
\end{equation}

Next we claim that 
\begin{equation}
\label{eq_boun_7}
\|\Delta_g v_0\|_{H^{-2}(M^{\text{int}})}=\mathcal{O}(\lambda^{\frac{\alpha(n-3)}{2}+\frac{3}{2}}), \quad \frac{1}{3}\le \alpha\le \frac{1}{2},
\end{equation}
as $\lambda\to 0$. In order to prove \eqref{eq_boun_7}, we first compute the Euclidean Laplacian acting on $v_0$, 
\begin{equation}
\label{eq_boun_11}
\begin{aligned}
\Delta v_0=& e^{\frac{i}{\lambda}(\tau'\cdot x'+ix_n)}\bigg[  \lambda^{-2\alpha} (\Delta \eta)\bigg(\frac{x}{\lambda^{\alpha}}\bigg)+ 2i \lambda^{-\alpha -1} (\nabla \eta)\bigg(\frac{x}{\lambda^{\alpha}}\bigg)\cdot(\tau',i)\\
& -
\lambda^{-2} (\tau',i)\cdot(\tau',i)\eta\bigg(\frac{x}{\lambda^{\alpha}}\bigg)
\bigg]\\
= & e^{\frac{i}{\lambda}(\tau'\cdot x'+ix_n)}\bigg[  \lambda^{-2\alpha} (\Delta \eta)\bigg(\frac{x}{\lambda^{\alpha}}\bigg)+ 2i \lambda^{-\alpha -1} (\nabla \eta)\bigg(\frac{x}{\lambda^{\alpha}}\bigg)\cdot(\tau',i)\bigg],
\end{aligned}
\end{equation}
where we have used that $(\tau',i)\cdot(\tau',i)=0$.  The second term in the right hand side of \eqref{eq_boun_11} has the worst growth as $\alpha\to 0$ and we will analyze it. The first term in the right hand side of \eqref{eq_boun_11}  can be treated in a similar fashion.  To that end, we note that the second term in the right hand side of \eqref{eq_boun_11} has the form 
\[
\lambda^{-\alpha -1} \chi\bigg(\frac{x}{\lambda^{\alpha}}\bigg)e^{\frac{i}{\lambda}(\tau'\cdot x'+ix_n)},
\]
where $\chi\in C^\infty(\R^n)$ is supported in a small neighborhood of $0$, and we can proceed similarly to \cite[Appendix]{Guill_Tzou_2011}. Setting 
\[
L=\frac{\nabla \overline{\phi}\cdot \nabla}{i |\nabla \phi |^2}=\frac{1}{2i}\nabla \overline{\phi}\cdot \nabla, \quad \phi=\tau'\cdot x'+ix_n,
\]
we get $Le^{\frac{i}{\lambda}(\tau'\cdot x'+ix_n)}=\lambda^{-1}e^{\frac{i}{\lambda}(\tau'\cdot x'+ix_n)}$.  Letting $\psi\in C^\infty_0(M^{\text{int}})$, and integrating by parts twice using the operator $L$, we obtain that 
\begin{equation}
\label{eq_boun_12}
\begin{aligned}
\lambda^{-\alpha -1} \int_M &\chi\bigg(\frac{x}{\lambda^{\alpha}}\bigg)\psi (x) e^{\frac{i}{\lambda}(\tau'\cdot x'+ix_n)}dV_g
\\
&=\lambda^{-\alpha -1} \lambda^2 \int_M  (L)^2\bigg(\chi\bigg(\frac{x}{\lambda^{\alpha}}\bigg)\psi (x) |g(x)|^{1/2}\bigg) e^{\frac{i}{\lambda}(\tau'\cdot x'+ix_n)}dx,
\end{aligned}
\end{equation}
since the transpose $L^t= -L$.  The term in the right hand side of \eqref{eq_boun_12}, where the bound cannot be improved integrating by parts further, will occur when the operator $(L)^2$ falls on $\psi$, and in this case, using the Cauchy--Schwarz inequality and a computation similar to \eqref{eq_boun_10},  we get 
\begin{equation}
\label{eq_boun_13}
\begin{aligned}
\bigg| & \lambda^{-\alpha +1}  \int_M  \chi\bigg(\frac{x}{\lambda^{\alpha}} \bigg) e^{\frac{i}{\lambda}(\tau'\cdot x'+ix_n)} (L)^2(\psi (x)) dV_g \bigg|\\
&\le \lambda^{-\alpha +1}\bigg\|\chi\bigg(\frac{x}{\lambda^{\alpha}} \bigg) e^{\frac{i}{\lambda}(\tau'\cdot x'+ix_n)}\bigg\|_{L^2(M)}\|\psi\|_{H^2(M^{\text{int}})}\le \mathcal{O}(\lambda^{\frac{\alpha(n-3)}{2}+\frac{3}{2}})\|\psi\|_{H^2(M^{\text{int}})}.
\end{aligned}
\end{equation}
Proceeding similarly, integrating by parts using the operator $L$, if needed, we can bound all the other terms in \eqref{eq_boun_12} with the same bound as in \eqref{eq_boun_13}. Therefore, it follows from \eqref{eq_boun_11} and \eqref{eq_boun_13} that for $0<\alpha\le 1/2$, we have 
\begin{equation}
\label{eq_boun_14}
\|\Delta v_0\|_{H^{-2}(M^{\text{int}})}=\mathcal{O}(\lambda^{\frac{\alpha(n-3)}{2}+\frac{3}{2}}), 
\end{equation}
as $\lambda\to 0$. To get the bound \eqref{eq_boun_7} for the Laplace-Beltrami operator, we notice that in view of \eqref{eq_boun_2},  \eqref{eq_boun_2_lap}, and \eqref{eq_boun_14}, we have to bound 
\begin{equation}
\label{eq_boun_15}
\sum_{\alpha,\beta=1}^{n-1}(g^{\alpha\beta}(x)-\delta^{\alpha\beta})D_{x_\alpha}D_{x_\beta}v_0+f(x)D_{x_n}v_0+R(x,D_{x'})v_0
\end{equation}
in $H^{-2}(M^{\text{int}})$. Let us proceed to bound the first term. To that end, we  compute
\begin{equation}
\label{eq_boun_16}
\begin{aligned}
 D_{x_\alpha}D_{x_\beta}v_0= & e^{\frac{i}{\lambda}(\tau'\cdot x'+ix_n)}\bigg[  \lambda^{-2\alpha} (D_{x_\alpha}D_{x_\beta} \eta)\bigg(\frac{x}{\lambda^{\alpha}}\bigg)+  \lambda^{-1- \alpha} (D_{x_\alpha} \eta)\bigg(\frac{x}{\lambda^{\alpha}}\bigg)\tau_\beta\\
& +  \lambda^{-1-\alpha} (D_{x_\beta} \eta)\bigg(\frac{x}{\lambda^{\alpha}}\bigg)\tau_\alpha + \lambda^{-2} \tau_\alpha \tau_\beta \eta\bigg(\frac{x}{\lambda^{\alpha}}\bigg)
\bigg].
\end{aligned}
\end{equation}

The worst growth as $\lambda\to 0$ is in the fourth term in \eqref{eq_boun_16}, and therefore, in view of \eqref{eq_boun_15}, we proceed to bound
\[
\lambda^{-2} (g^{\alpha\beta}-\delta^{\alpha\beta})\chi\bigg(\frac{x}{\lambda^{\alpha}}\bigg)e^{\frac{i}{\lambda}(\tau'\cdot x'+ix_n)}, \quad  \chi (x) =\tau_\alpha \tau_\beta \eta(x),
\]
in $H^{-2}(M^{\text{int}})$. The other terms in the first term in \eqref{eq_boun_15} can be bounded similarly.  As before, integrating by parts twice using the operator $L$, we get 
\begin{equation}
\label{eq_boun_17_1}
\begin{aligned}
\lambda^{-2} \int_M &(g^{\alpha\beta}-\delta^{\alpha\beta})\chi\bigg(\frac{x}{\lambda^{\alpha}}\bigg)e^{\frac{i}{\lambda}(\tau'\cdot x'+ix_n)}\psi dV_g\\
&= \int_M (L)^2\bigg( (g^{\alpha\beta}-\delta^{\alpha\beta})\chi\bigg(\frac{x}{\lambda^{\alpha}}\bigg) \psi |g(x)|^{1/2}\bigg) e^{\frac{i}{\lambda}(\tau'\cdot x'+ix_n)}dx.
\end{aligned}
\end{equation}
The term in the right hand side of \eqref{eq_boun_17_1} where the bound cannot be improved occurs when the operator $(L)^2$ falls on $\psi$, and in this case, using the Cauchy--Schwarz inequality, \eqref{eq_boun_17},   and a computation similar to \eqref{eq_boun_10},  we get  
\begin{equation}
\label{eq_boun_18}
\begin{aligned}
\bigg| \int_M &(g^{\alpha\beta}-\delta^{\alpha\beta})\chi\bigg(\frac{x}{\lambda^{\alpha}}\bigg)  e^{\frac{i}{\lambda}(\tau'\cdot x'+ix_n)} (L)^2\psi dV_g \bigg|\\
&\le \bigg(\int_M (\mathcal{O}(|x'|^4)+\mathcal{O}(x_n^2)) \chi^2\bigg(\frac{x}{\lambda^{\alpha}}\bigg)e^{-\frac{2x_n}{\lambda}}dV_g\bigg)^{1/2}\|\psi\|_{H^2(M^{\text{int}})}\\
&\le \bigg(\mathcal{O}(\lambda^{2\alpha} \lambda^{\frac{\alpha(n-1)}{2}+\frac{1}{2}})  +\mathcal{O}(\lambda^{\frac{\alpha(n-1)}{2}}) \bigg(\int_0^\infty x_n^2 e^{-\frac{2x_n}{\lambda}}dx_n\bigg)^{1/2}\bigg)\|\psi\|_{H^2(M^{\text{int}})}\\=&\bigg(\mathcal{O}(\lambda^{\frac{\alpha(n+3)}{2}+\frac{1}{2}})+ \mathcal{O}(\lambda^{\frac{\alpha(n-1)}{2}+\frac{3}{2}})\bigg) \|\psi\|_{H^2(M^{\text{int}})}.
\end{aligned}
\end{equation} 
The growth in $\lambda$ in  \eqref{eq_boun_18} is smaller than or equal to that in  the desired bound \eqref{eq_boun_7} provided that $\alpha\ge 1/3$.  Proceeding similarly, integrating by parts, using the operator $L$ if needed, we can bound all the other terms in  \eqref{eq_boun_17_1} by the bound which is the same or better than
\[
\mathcal{O}(\lambda^{\frac{\alpha(n-3)}{2}+\frac{3}{2}})\|\psi\|_{H^2(M^{\text{int}})}.
\]
Thus, using this and in view of \eqref{eq_boun_15},  \eqref{eq_boun_16},  \eqref{eq_boun_17_1}, \eqref{eq_boun_18}, we conclude that 
\begin{equation}
\label{eq_boun_19}
\|\sum_{\alpha,\beta=1}^{n-1}(g^{\alpha\beta}(x)-\delta^{\alpha\beta})D_{x_\alpha}D_{x_\beta}v_0 \|_{H^{-2}(M^{\text{int}})}= \mathcal{O}(\lambda^{\frac{\alpha(n-3)}{2}+\frac{3}{2}}),
\end{equation}
provided that $\frac{1}{3}\le \alpha\le \frac{1}{2}$.  Finally, as $R(x,D_{x'})$ is a differential operator of order $1$ in $x'$, similarly, we get  
\begin{equation}
\label{eq_boun_20}
\|f(x)D_{x_n}v_0+R(x,D_{x'})v_0\|_{H^{-2}(M^{\text{int}})}=\mathcal{O}(\lambda^{\frac{\alpha(n-1)}{2}+\frac{3}{2}}),
\end{equation}
which is better than the desired bound \eqref{eq_boun_7}.  Hence, combining \eqref{eq_boun_14}, \eqref{eq_boun_19}, and \eqref{eq_boun_20}, we get \eqref{eq_boun_7}. 

Now it follows from \eqref{eq_boun_6_remainder_bound} and \eqref{eq_boun_7} that 
\begin{equation}
\label{eq_boun_21}
\|r\|_{L^2(M)}=\mathcal{O}(\lambda^{\frac{\alpha(n-3)}{2}+\frac{3}{2}}), \quad \frac{1}{3}\le \alpha\le \frac{1}{2},
\end{equation}
as $\lambda\to 0$.  Notice that the bound for $r$ in $L^2$ is better than the bound for $v_0$ in $L^2$, cf. \eqref{eq_boun_6}. 

Letting 
\begin{equation}
\label{eq_boun_21_u_1}
u_1=v_0+r, \quad u_2=\overline{v_0+r},
\end{equation}
in \eqref{eq_boun_1}, and multiplying \eqref{eq_boun_1} by $\lambda^{-\alpha(n-1)-1}$,  we get 
\begin{equation}
\label{eq_boun_22}
0=\lambda^{-\alpha(n-1)-1}\int_M V(v_0+r)(\overline{v_0}+\overline{r})dV_g=\lambda^{-\alpha(n-1)-1} (I_1+I_2+I_3).
\end{equation}
Here
\[
I_1=\int_M V |v_0|^2 dV_g, \quad  I_2=\int_M V (v_0 \overline{r}+\overline{v_0}r) dV_g, \quad I_3=\int_M V |r|^2 dV_g.
\]
Using \eqref{eq_boun_6} and \eqref{eq_boun_21}, we obtain that 
\begin{equation}
\label{eq_boun_23}
\lambda^{-\alpha(n-1)-1} |I_2|\le \mathcal{O}(\lambda^{-\alpha(n-1)-1} )\|v_0\|_{L^2(M)}\|r\|_{L^2(M)}=\mathcal{O}(\lambda^{1-\alpha}),
\end{equation}
and 
\begin{equation}
\label{eq_boun_24}
\lambda^{-\alpha(n-1)-1} |I_3|\le \mathcal{O}(\lambda^{-\alpha(n-1)-1} )\|r\|_{L^2(M)}^2=\mathcal{O}(\lambda^{2-2\alpha}),
\end{equation}
as $\lambda\to 0$. 
Using \eqref{eq_boun_5}, \eqref{eq_boun_4}, the fact that $V$ is continuous, and making  the change of variables $y'=\frac{x'}{\lambda^{\alpha}}$, $y_n=\frac{x_n}{\lambda}$, we get 
\begin{equation}
\label{eq_boun_25}
\begin{aligned}
\lim_{\lambda\to 0}& \lambda^{-\alpha(n-1)-1}I_1\\
&= \lim_{\lambda\to 0}\int_{\R^{n-1}}\int_0^\infty V(\lambda^{\alpha}y',\lambda y_n)\eta^2(y', \lambda^{1-\alpha}y_n)e^{-2y_n} |g(\lambda^{\alpha}y', \lambda y_n)|^{1/2}dy'dy_n\\
&=V(0)|g(0)|^{1/2}\int_0^{+\infty} e^{-2y_n}dy_n=\frac{1}{2}V(0).
\end{aligned}
\end{equation}
Passing to the limit $\lambda\to 0$ in \eqref{eq_boun_22} and using \eqref{eq_boun_23}, \eqref{eq_boun_24}, and \eqref{eq_boun_25}, we obtain that $V(0)=0$, showing that $V|_{\p M}=0$. Notice that here we can consider any $\alpha$, $\frac{1}{3}\le \alpha\le \frac{1}{2}$. 

Next we would like to prove that $\p_\nu V|_{\p M}=0$. To that end, as before, we let $x_0\in \p M$ and consider boundary normal coordinates centered at $x_0$. As $V\in C^{1,1}$ and $V(x',0)=0$, using the fundamental theorem of calculus and integrating by parts, we have for $x$ near $x_0=0$, 
\begin{equation}
\label{eq_boun_26}
\begin{aligned}
V(x',x_n)=\int_0^1 \frac{d}{dt} V(x',tx_n)d(t-1)=V'_{x_n}(x',0)x_n+\int_0^1 (1-t) \frac{d^2}{dt^2} V(x',tx_n)\\
=V'_{x_n}(x',0)x_n+\int_0^1 (1-t)  V''_{x_nx_n}(x',tx_n)x_n^2dt=V'_{x_n}(x',0)x_n+ \mathcal{O}(x_n^2). 
\end{aligned}
\end{equation}
Now substituting $u_1$ and $u_2$ be given by \eqref{eq_boun_21_u_1} 
in \eqref{eq_boun_1}, multiplying \eqref{eq_boun_1} by $\lambda^{-\alpha(n-1)-2}$, and using \eqref{eq_boun_26}, we get 
\begin{equation}
\label{eq_boun_27}
0=\lambda^{-\alpha(n-1)-2}\int_M V(v_0+r)(\overline{v_0}+\overline{r})dV_g=\lambda^{-\alpha(n-1)-2} (I_{1,1} + I_{1,2}+I_2+I_3).
\end{equation}
Here 
\begin{align*}
&I_{1,1}=\int_M V'_{x_n}(x',0)x_n |v_0|^2 dV_g, \quad  I_{1,2}=\int_M \mathcal{O}(x_n^2) |v_0|^2 dV_g,\\
&I_2=\int_M V (v_0 \overline{r}+\overline{v_0}r) dV_g, \quad I_3=\int_M V |r|^2 dV_g.
\end{align*}

Using \eqref{eq_boun_5}, \eqref{eq_boun_4}, making  the change of variables $y'=\frac{x'}{\lambda^{\alpha}}$, $y_n=\frac{x_n}{\lambda}$, and using that $V'_{x_n}$ is continuous,  we obtain that 
\begin{equation}
\label{eq_boun_28}
\begin{aligned}
\lim_{\lambda\to 0}& \lambda^{-\alpha(n-1)-2}I_{1,1}\\
&=  \lim_{\lambda\to 0} \int_{\R^{n-1}}\int_0^\infty V'_{x_n}(\lambda^\alpha y',0) \eta^2(y', \lambda^{1-\alpha}y_n)y_ne^{-2y_n} |g(\lambda^{\alpha}y', \lambda y_n)|^{1/2}dy'dy_n\\
&=V'_{x_n}(0)|g(0)|^{1/2}\int_0^{+\infty} y_n e^{-2y_n}dy_n=\frac{1}{4}V'_{x_n}(0).
\end{aligned}
\end{equation}
Using \eqref{eq_boun_5}, we get 
\begin{equation}
\label{eq_boun_29}
 \lambda^{-\alpha(n-1)-2}|I_{1,2}|\le \mathcal{O}(\lambda^{-\alpha(n-1)-2})\int_{|x|\le c\lambda^{\alpha},x_n\ge 0}x_n^2e^{-\frac{2x_n}{\lambda}}dx'dx_n=\mathcal{O}(\lambda).
\end{equation}

Using \eqref{eq_boun_21}, we see that 
\begin{equation}
\label{eq_boun_30}
 \lambda^{-\alpha(n-1)-2}|I_{3}|\le \mathcal{O}(\lambda^{-\alpha(n-1)-2})\|r\|_{L^2(M)}^2=\mathcal{O}(\lambda^{1-2\alpha})=o(1),
\end{equation}
as $\lambda\to 0$, provided that $\alpha<\frac{1}{2}$. 

In view of \eqref{eq_boun_5} and \eqref{eq_boun_26}, we have
\[
\|Vv_0\|_{L^2(M)}=\bigg(\int_{|x|\le c\lambda^{\alpha},x_n\ge 0}\mathcal{O}(x_n^2)e^{-\frac{2x_n}{\lambda}}dx'dx_n\bigg)^{\frac{1}{2}}=\mathcal{O}(\lambda^{\frac{\alpha(n-1)}{2}+\frac{3}{2}}),
\]
and therefore, using  \eqref{eq_boun_21}, we obtain that 
\begin{equation}
\label{eq_boun_31}
 \lambda^{-\alpha(n-1)-2}|I_{2}|\le \mathcal{O}(\lambda^{-\alpha(n-1)-2})\|r\|_{L^2(M)}\|Vv_0\|_{L^2(M)}=\mathcal{O}(\lambda^{1-\alpha}).
\end{equation}
Passing to the limit $\lambda\to 0$ in \eqref{eq_boun_27}, and using \eqref{eq_boun_28}, \eqref{eq_boun_29}, \eqref{eq_boun_20}, and \eqref{eq_boun_31}, we get  $V'_{x_n}(0)=0$ provided that $\alpha$ is a fixed number satisfying $\frac{1}{3}\le \alpha<\frac{1}{2}$. This shows that $\p_{\nu}V|_{\p M}=0$. The proof is complete. 
\end{proof}

In order to prove Proposition \ref{prop_density_form}, we shall need the following boundary determination result.
\begin{prop}
\label{prop_boundary_A}
Let $(M,g)$ be a conformally transversally anisotropic manifold of  dimension $n\ge 3$. Let $A\in C^{1,1}(M, T^*M)$ be a $1$-form. If
\begin{equation}
\label{eq_boun_A_1}
\int_M\langle A, d u_1\rangle_g u_2 dV_g=0,
\end{equation}
for all harmonic functions $u_1, u_2\in C^\infty(M)$, then $A|_{\p M}=0$ and $\p_\nu A|_{\p M}=0$.
\end{prop}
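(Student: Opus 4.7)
The plan is to mimic the proof of Proposition \ref{prop_boundary_V} closely; the essential difference is that pairing $A$ with $du_1$ in place of pairing a scalar potential with $u_1$ introduces an extra factor of $\lambda^{-1}$ in the leading-order asymptotics. This boost lets us read off $A|_{\p M}$ directly from the limiting identity, and one further order of refinement, allowed by $A\in C^{1,1}(M,T^*M)$, then yields $\p_\nu A|_{\p M}$.

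Fix $x_0\in\p M$ and pass to boundary normal coordinates centered at $x_0$ as in \eqref{eq_boun_2_metric}--\eqref{eq_boun_2}. For $0<\lambda\ll 1$, $\tfrac{1}{3}\le\alpha<\tfrac{1}{2}$, and a unit tangent covector $\tau'\in T^*_{x_0}\p M$, take $v_0$ as in \eqref{eq_boun_5} and construct Carleman remainders $r_1,r_2$ via \eqref{eq_boun_6_Carleman} so that $u_1=v_0+r_1$ and $u_2=\overline{v_0+r_2}$ are exactly harmonic, exactly as in the proof of Proposition \ref{prop_boundary_V}. To ensure that $du_1$ is an $L^2$ object, I would use an $H^1$-version of the Carleman solvability (obtained by dualizing the gain-of-two-derivatives estimate \eqref{eq_boun_6_Carleman} against $H^{-1}$ source data), yielding $\|r_j\|_{H^1(M)}=O(\lambda^{\alpha(n-3)/2+1/2})$; the hypothesis \eqref{eq_boun_A_1}, a priori for $u_1,u_2\in C^\infty(M)$, then extends to these $u_1,u_2$ by a density argument in the spirit of Proposition \ref{prop_density}. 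A direct computation gives
\[
dv_0=\frac{i}{\lambda}\,d\phi\cdot v_0+O(\lambda^{-\alpha}),\qquad d\phi=\sum_{\beta=1}^{n-1}\tau'_\beta\,dx_\beta+i\,dx_n,
\]
so at leading order in $\lambda$,
\[
\langle A,du_1\rangle_g\,u_2=\frac{i}{\lambda}\Big(\sum_{\beta,\gamma=1}^{n-1}g^{\beta\gamma}(x)A_\beta(x)\tau'_\gamma+iA_n(x)\Big)|v_0|^2+\text{l.o.t.}
\]
Multiplying \eqref{eq_boun_A_1} by $\lambda^{-\alpha(n-1)}$ and passing to the limit $\lambda\to 0$ as in the derivation of \eqref{eq_boun_25} yields
\[
\tfrac{1}{2}\bigl(iA'(x_0)\cdot\tau'-A_n(x_0)\bigr)=0,
\]
where $A'=(A_1,\dots,A_{n-1})$ denotes the tangential part of $A$. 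Replacing $\tau'$ by $-\tau'$ and forming sums and differences gives $A_n(x_0)=0$ and $A'(x_0)\cdot\tau'=0$ for every unit tangent $\tau'$, hence $A(x_0)=0$. Since $x_0\in\p M$ was arbitrary, $A|_{\p M}=0$.

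With $A|_{\p M}=0$ and $A\in C^{1,1}$, a first-order Taylor expansion in the normal direction gives, componentwise,
\[
A_j(x',x_n)=x_n\,\p_{x_n}A_j(x',0)+O(x_n^2),\qquad j=1,\dots,n.
\]
Substituting into the asymptotics above inserts an extra factor of $x_n$, which after the rescaling $y_n=x_n/\lambda$ produces one additional power of $\lambda$ via $\int_0^\infty y_n e^{-2y_n}\,dy_n=\tfrac{1}{4}$. Multiplying \eqref{eq_boun_A_1} by $\lambda^{-\alpha(n-1)-1}$ instead, passing to the limit, and varying $\tau'$ as before gives
\[
\tfrac{1}{4}\bigl(i\p_{x_n}A'(x_0)\cdot\tau'-\p_{x_n}A_n(x_0)\bigr)=0
\]
for every unit $\tau'$, and hence $\p_{x_n}A(x_0)=0$, i.e., $\p_\nu A|_{\p M}=0$.

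The main obstacle is the error analysis: the Carleman remainders $r_j$ and their first derivatives, the metric perturbation $g^{\beta\gamma}-\delta^{\beta\gamma}=O(|x'|^2)+O(x_n)$ from \eqref{eq_boun_17}, the subleading term $\lambda^{-\alpha}(d\eta)(x/\lambda^\alpha)e^{i\phi/\lambda}$ in $dv_0$, and the $O(x_n^2)$ tail in the Taylor expansion must all be shown to contribute at strictly subleading orders. The bookkeeping parallels that of Proposition \ref{prop_boundary_V}: repeated integrations by parts against the operator $L=(2i)^{-1}\,\overline{\nabla\phi}\cdot\nabla$ (which differentiates $e^{i\phi/\lambda}$ with an extra $\lambda^{-1}$ at each step), followed by Cauchy--Schwarz and the improved $H^1$ bound on $r_j$. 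The range $\tfrac{1}{3}\le\alpha<\tfrac{1}{2}$ is precisely what keeps every such term subdominant at both the $A|_{\p M}$ and $\p_\nu A|_{\p M}$ extraction orders.
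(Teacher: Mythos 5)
Your asymptotics and final conclusion match the paper's, but your construction of $u_1$ takes a genuinely different route. You propose Carleman solvability for both remainders, invoking an ``$H^1$-version'' of \eqref{eq_boun_6_Carleman} by dualization against $H^{-1}$ data. That step is not immediate: \eqref{eq_boun_6_Carleman} bounds $H^2_{\text{scl}}$ by $L^2$, and its direct dualization yields an $L^2$ solution from $H^{-2}$ data, not an $H^1$ solution from $H^{-1}$ data; the shifted estimate $\|v\|_{H^1_{\text{scl}}} \lesssim h^{-1}\|Pv\|_{H^{-1}_{\text{scl}}}$ would need its own justification, and you would additionally need a density extension of \eqref{eq_boun_A_1} in the $u_1$ slot (to $H^1$ harmonic functions). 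The paper sidesteps both issues by constructing $r_1\in H^1_0(M^{\text{int}})$ as the solution of the homogeneous Dirichlet problem $-\Delta_g r_1 = \Delta_g v_0$, $r_1|_{\p M}=0$, via Lax--Milgram. This gives the same $H^1$ bound $\|r_1\|_{H^1(M^{\text{int}})}=\mathcal{O}(\lambda^{\alpha(n-3)/2+1/2})$ directly from the $H^{-1}$ estimate on $\Delta_g v_0$, and boundary elliptic regularity upgrades $r_1$ to $C^\infty(M)$, so $u_1\in C^\infty(M)$ and the hypothesis applies to it without any density argument; only the one-sided extension to $u_2\in L^2(M)$ via Proposition \ref{prop_density} is needed. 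Aside from that, the limiting computation, the sign trick $\tau'\mapsto -\tau'$, and the second-order extraction of $\p_\nu A|_{\p M}$ are carried out as you describe and yield the stated conclusion.
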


\begin{proof}
First by Proposition \ref{prop_density}, we see that \eqref{eq_boun_A_1} holds for all harmonic functions $u_2\in L^2(M)$. 
To prove this result, we shall test the integral identity \eqref{eq_boun_A_1} with harmonic functions $u_2\in L^2(M)$, constructed in Proposition \ref{prop_boundary_V}, 
\begin{equation}
\label{eq_boun_32}
u_2=\overline{v_0+r}.
\end{equation}
Since for $u_1$ we need estimates in $H^1(M^\text{int})$, we shall construct $u_1$ following \cite{Brown_2001},  \cite{Brown_Salo_2006}, see also \cite[Appendix A]{Krup_Uhlmann_magn_2018}.  We let 
\begin{equation}
\label{eq_boun_33}
u_1=v_0+r_1,
\end{equation}
where $r_1\in H^1_0(M^{\text{int}})$ is a solution to the Dirichlet problem, 
\begin{equation}
\label{eq_boun_34}
\begin{cases}
-\Delta_g r_1=\Delta_g v_0 & \text{in}\quad M,\\
r_1|_{\p M}=0. 
\end{cases}
\end{equation}
Note that by boundary elliptic regularity, $r_1\in C^\infty(M)$, and therefore, $u_1\in C^\infty(M)$.

Applying the Lax--Milgram lemma to \eqref{eq_boun_34}, we get 
\begin{equation}
\label{eq_boun_35}
\|r_1\|_{H^1_0(M^{\text{int}})}\le C\|\Delta_g v_0\|_{H^{-1}(M^{\text{int}})}.
\end{equation}
Similarly to the bound \eqref{eq_boun_7}, one can show that 
\[
\|\Delta_g v_0\|_{H^{-1}(M^{\text{int}})}=\mathcal{O}(\lambda^{\frac{\alpha(n-3)}{2}+\frac{1}{2}}), \quad \frac{1}{3}\le \alpha\le \frac{1}{2},
\]
see also \cite[Appendix A]{Krup_Uhlmann_magn_2018}.
This bound together with \eqref{eq_boun_35} implies that 
\begin{equation}
\label{eq_boun_36}
\|r_1\|_{H^1(M^{\text{int}})}=\mathcal{O}(\lambda^{\frac{\alpha(n-3)}{2}+\frac{1}{2}}), \quad \frac{1}{3}\le \alpha\le \frac{1}{2},
\end{equation}
as $\lambda\to 0$. 

We shall also need the following bound
\begin{equation}
\label{eq_boun_36_1}
\|dv_0\|_{L^2(M)}=\mathcal{O}(\lambda^{\frac{\alpha(n-1)}{2}-\frac{1}{2}}),
\end{equation}
as $\lambda\to 0$, which is in view of \eqref{eq_boun_5} implied by the following estimate
\[
\|dv_0\|_{L^2(M)}\le \mathcal{O}(1)\bigg(\int_{|x|\le c\lambda^\alpha,x_n\ge 0}\lambda^{-2}e^{-\frac{2x_n}{\lambda}}dx'dx_n\bigg)^{1/2}=\mathcal{O}(\lambda^{\frac{\alpha(n-1)}{2}-\frac{1}{2}}).
\]

Now substituting $u_1$, $u_2$ given by \eqref{eq_boun_33}, \eqref{eq_boun_32}, respectively, into \eqref{eq_boun_A_1}, and multiplying \eqref{eq_boun_A_1} by $\lambda^{-\alpha(n-1)}$ , we get
\begin{equation}
\label{eq_boun_37}
0= \lambda^{-\alpha(n-1)}\int_{M} \langle A, dv_0+dr_1\rangle_g(\overline{v_0}+\overline{r})dV_g= \lambda^{-\alpha(n-1)}( I_1+I_2+I_3),
\end{equation}
where 
\begin{align*}
I_1=\int_{M} \langle A, dv_0\rangle_g \overline{v_0}dV_g, \quad I_2=\int_{M} \langle A, dr_1\rangle_g(\overline{v_0}+\overline{r})dV_g,\quad I_3= \int_{M} \langle A, dv_0\rangle_g \overline{r}dV_g.
\end{align*}

First using \eqref{eq_boun_5}, we write
\[
I_1=I_{1,1}+I_{1,2}, 
\]
where 
\begin{align*}
&I_{1,1}=i\lambda^{-1}  \int_M  \langle A, \tau'\cdot dx'+idx_n\rangle_g \eta^2\bigg(\frac{x}{\lambda^\alpha}\bigg)e^{-\frac{2x_n}{\lambda}}dV_g,\\
&I_{1,2}=\lambda^{-\alpha}  \int_M\bigg\langle A, (d\eta)\bigg(\frac{x}{\lambda^\alpha}\bigg) \bigg\rangle_g \eta\bigg(\frac{x}{\lambda^\alpha}\bigg)e^{-\frac{2x_n}{\lambda}}dV_g.
\end{align*}
Using \eqref{eq_boun_2_metric}, and making  the change of variables $y'=\frac{x'}{\lambda^{\alpha}}$, $y_n=\frac{x_n}{\lambda}$, we get 
\begin{equation}
\label{eq_boun_38}
\begin{aligned}
& \lim_{\lambda\to 0}\lambda^{-\alpha(n-1)} I_{1,1}
 =i  \lim_{\lambda\to 0} \int_{\R^{n-1}}\int_0^{+\infty} |g(\lambda^\alpha y', \lambda y_n)|^{1/2}\eta^2(y', \lambda^{1-\alpha}y_n)e^{-2 y_n}\\
&  \bigg(\sum_{\alpha,\beta=1}^{n-1} g^{\alpha\beta}(\lambda^\alpha y', \lambda y_n)A_\alpha(\lambda^\alpha y', \lambda y_n)\tau'_\beta+ A_n(\lambda^\alpha y', \lambda y_n)i \bigg)
 dy'dy_n\\
 &=i \bigg(\sum_{\alpha,\beta=1}^{n-1} g^{\alpha\beta}(0)A_\alpha(0)\tau'_\beta+ A_n(0)i \bigg)|g(0)|^{1/2}\int_0^{+\infty}e^{-2 y_n}dy_n=\frac{i}{2}\langle A(0), (\tau',i)\rangle. 
 \end{aligned}
\end{equation}

Estimating similarly as in  \eqref{eq_boun_10}, we get 
\begin{equation}
\label{eq_boun_39}
\lambda^{-\alpha(n-1)} |I_{1,2}|\le \mathcal{O}(\lambda^{-\alpha n})\bigg\|(d\eta)\bigg(\frac{x}{\lambda^\alpha}\bigg)\bigg\|_{L^2(M)}\bigg\|\eta\bigg(\frac{x}{\lambda^\alpha}\bigg)e^{-\frac{2x_n}{\lambda}}\bigg\|_{L^2(M)}=\mathcal{O}(\lambda^{\frac{1-\alpha}{2}}).
\end{equation}

Using \eqref{eq_boun_6}, \eqref{eq_boun_21}, and \eqref{eq_boun_36}, we see that 
\begin{equation}
\label{eq_boun_40}
\lambda^{-\alpha(n-1)} |I_{2}|\le \mathcal{O}(\lambda^{-\alpha(n-1)} )\|dr_1\|_{L^2(M)}\|v_0+r\|_{L^2(M)}= \mathcal{O}(\lambda^{1-\alpha}).
\end{equation}

Finally, using \eqref{eq_boun_36_1} and \eqref{eq_boun_21}, we obtain that 
\begin{equation}
\label{eq_boun_41}
\lambda^{-\alpha(n-1)} |I_{3}|\le \mathcal{O}(\lambda^{-\alpha(n-1)} )\|dv_0\|_{L^2(M)}\|r\|_{L^2(M)}= \mathcal{O}(\lambda^{1-\alpha}).
\end{equation}

Passing to the limit $\lambda\to 0$ in \eqref{eq_boun_37} and using \eqref{eq_boun_38}, \eqref{eq_boun_39}, \eqref{eq_boun_40}, and \eqref{eq_boun_41}, we conclude that $\langle A(0), (\tau',i)\rangle=0$. Now changing  $\tau'$ to $-\tau'$, we see that $A_n(0)=0$, and therefore, $\langle A'(0), \tau'\rangle=0$, where $A'=(A_1,\dots, A_{n-1})$. As $\tau'\in \R^{n-1}$ is an arbitrary tangent vector to $\p M$ at $x_0=0$, we get $A'(0)=0$. This shows that $A|_{\p M}=0$. 

Next we shall show that $\p_\nu A|_{\p M}=0$. To that end, as before, we let $x_0\in \p M$ and consider the boundary normal coordinates centered at $x_0$. Applying computations similar to \eqref{eq_boun_26} to each component of $A$, we get 
\begin{equation}
\label{eq_boun_41_1}
A(x',x_n)=(A'_{1 x_n}, \dots, A'_{n x_n})(x',0)x_n+\mathcal{O}(x_n^2)=\p_{x_n}A(x',0)x_n+ \mathcal{O}(x_n^2). 
\end{equation}

Substituting $u_1$ and $u_2$ given by \eqref{eq_boun_33} and \eqref{eq_boun_32} into \eqref{eq_boun_A_1}, and multiplying \eqref{eq_boun_A_1} by $\lambda^{-\alpha(n-1)-1}$, we have in view of \eqref{eq_boun_41_1},
\begin{equation}
\label{eq_boun_42}
0=\lambda^{-\alpha(n-1)-1}\int_M \langle A, dv_0+dr_1\rangle_g (\overline{v_0}+\overline{r})dV_g=\lambda^{-\alpha(n-1)-1}(I_{1,1}+I_{1,2}+I_2+I_3+I_4),
\end{equation}
where
\begin{align*}
&I_{1,1}=\int_M \langle \p_{x_n}A(x',0)x_n, dv_0\rangle_g \overline{v_0}dV_g, \
I_{1,2}=\int_M \langle \mathcal{O}(x_n^2), dv_0\rangle_g \overline{v_0}dV_g,\\
&I_2=\int_M \langle A, dr_1\rangle_g \overline{v_0}dV_g, \quad I_3=\int_M \langle A, dr_1\rangle_g \overline{r}dV_g, \quad
I_4=\int_M \langle A, dv_0\rangle_g \overline{r}dV_g.
\end{align*}
In view of \eqref{eq_boun_5} we write
\begin{align*}
&I_{1,11}=i\lambda^{-1} \int_M \langle \p_{x_n}A(x',0)x_n, \tau'\cdot dx'+idx_n\rangle_g  \eta^2\bigg( \frac{x}{\lambda^\alpha}\bigg) e^{-\frac{2x_n}{\lambda}}dV_g,\\
&I_{1,12}=\lambda^{-\alpha} \int_M \bigg\langle \p_{x_n}A(x',0)x_n, (d\eta)\bigg( \frac{x}{\lambda^\alpha}\bigg)\bigg\rangle_g  \eta\bigg( \frac{x}{\lambda^\alpha}\bigg) e^{-\frac{2x_n}{\lambda}}dV_g.
\end{align*}

Using \eqref{eq_boun_2_metric}, and making  the change of variables $y'=\frac{x'}{\lambda^{\alpha}}$, $y_n=\frac{x_n}{\lambda}$, we get 
\begin{equation}
\label{eq_boun_43}
\begin{aligned}
& \lim_{\lambda\to 0}\lambda^{-\alpha(n-1)-1} I_{1,11}
 =i  \lim_{\lambda\to 0} \int_{\R^{n-1}}\int_0^{+\infty} |g(\lambda^\alpha y', \lambda y_n)|^{1/2}y_n\eta^2(y', \lambda^{1-\alpha}y_n)e^{-2 y_n}\\
&  \bigg(\sum_{\alpha,\beta=1}^{n-1} g^{\alpha\beta}(\lambda^\alpha y', \lambda y_n)\p_{x_n}A_\alpha(\lambda^\alpha y', 0)\tau'_\beta+ \p_{x_n}A_n(\lambda^\alpha y', 0)i \bigg)
 dy'dy_n\\
 &=i \bigg(\sum_{\alpha,\beta=1}^{n-1} g^{\alpha\beta}(0)\p_{x_n}A_\alpha(0)\tau'_\beta+ \p_{x_n}A_n(0)i \bigg)|g(0)|^{1/2}\int_0^{+\infty}y_ne^{-2 y_n}dy_n\\
 &=\frac{i}{4}\langle \p_{x_n}A(0), (\tau',i)\rangle. 
 \end{aligned}
\end{equation}
Estimating similarly as in  \eqref{eq_boun_10}, we get 
\begin{equation}
\label{eq_boun_44}
\begin{aligned}
\lambda^{-\alpha(n-1)-1} |I_{1,12}|\le \mathcal{O}(\lambda^{-\alpha n-1})\bigg\|(d\eta)\bigg(\frac{x}{\lambda^\alpha}\bigg)\bigg\|_{L^2(M)}\bigg\|x_n\eta\bigg(\frac{x}{\lambda^\alpha}\bigg)e^{-\frac{2x_n}{\lambda}}\bigg\|_{L^2(M)}\\
=\mathcal{O}(\lambda^{\frac{1-\alpha}{2}}).
\end{aligned}
\end{equation}

Using \eqref{eq_boun_36_1} and estimating similarly as in  \eqref{eq_boun_10}, we obtain that 
\begin{equation}
\label{eq_boun_45}
\lambda^{-\alpha(n-1)-1} |I_{1,2}|\le \mathcal{O}(\lambda^{-\alpha(n-1)-1}) \|dv_0\|_{L^2(M)}\|x_n^2v_0\|_{L^2(M)}=\mathcal{O}(\lambda).
\end{equation}

Using \eqref{eq_boun_36}, \eqref{eq_boun_41_1}, we get 
\begin{equation}
\label{eq_boun_46}
\lambda^{-\alpha(n-1)-1} |I_{2}|\le \mathcal{O}(\lambda^{-\alpha(n-1)-1}) \|dr_1 \|_{L^2(M)}\|x_nv_0\|_{L^2(M)}=\mathcal{O}(\lambda^{1-\alpha}).
\end{equation}

Using \eqref{eq_boun_36} and \eqref{eq_boun_21}, we have
\begin{equation}
\label{eq_boun_47}
\lambda^{-\alpha(n-1)-1} |I_{3}|\le \mathcal{O}(\lambda^{-\alpha(n-1)-1}) \|dr_1\|_{L^2(M)}\|r\|_{L^2(M)}=\mathcal{O}(\lambda^{1-2\alpha})=o(1),
\end{equation}
as $\lambda\to 0$, provided that $\alpha<\frac{1}{2}$. 

Using \eqref{eq_boun_41_1},  \eqref{eq_boun_21},  and the fact  that 
\[
\|x_n dv_0\|_{L^2(M)}=\mathcal{O}(\lambda^{\frac{\alpha(n-1)}{2}+\frac{1}{2}}), 
\]
we obtain that 
\begin{equation}
\label{eq_boun_48}
\lambda^{-\alpha(n-1)-1} |I_{4}|\le \mathcal{O}(\lambda^{-\alpha(n-1)-1}) \|x_n dv_0\|_{L^2(M)}\|r\|_{L^2(M)}=\mathcal{O}(\lambda^{1-\alpha}).
\end{equation}

Let us fix $\frac{1}{3}\le \alpha<\frac{1}{2}$. Passing to the limit $\lambda\to 0$ in  \eqref{eq_boun_42} and using \eqref{eq_boun_43}, \eqref{eq_boun_44}, \eqref{eq_boun_45}, \eqref{eq_boun_46},   \eqref{eq_boun_47}, \eqref{eq_boun_48}, we conclude that  $\langle \p_{x_n}A(0), (\tau',i)\rangle=0$, and therefore, $\p_{x_n}A(0)=0$. This shows that $\p_\nu A|_{\p M}=0$. The proof is complete.  

\end{proof}

Finally, in order to prove Theorem \ref{thm_main} we shall need the following boundary determination result. 

\begin{prop}
\label{prop_boundary_A_V}
Let $(M,g)$ be a conformally transversally anisotropic manifold of dimension $n\ge 3$. Let $A\in C^{1,1}(M, T^*M)$ be a $1$-form and $V\in C^{1,1}(M)$. If
\begin{equation}
\label{eq_boun_A_V}
\int_M \big(4i \langle A, d(u_1u_2u_3)\rangle_g u_4-\big(3i d^*(A) +V\big) u_1u_2u_3u_4 \big)dV_g=0,
\end{equation}
for all harmonic functions $u_j\in C^{2,\alpha}(M)$, $j=1,\dots 4$, then $A|_{\p M}=0$ and $\p_\nu A|_{\p M}=0$.
\end{prop}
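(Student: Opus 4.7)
\textbf{Proof plan for Proposition \ref{prop_boundary_A_V}.} The strategy is to reduce the four-function identity \eqref{eq_boun_A_V} to the two-function identities already analyzed in Propositions \ref{prop_boundary_A} and \ref{prop_boundary_V}, by choosing several of the $u_j$ to be the harmonic constant $1$ and then testing with boundary-concentrated harmonic functions of the form used in those propositions. Throughout I will fix $x_0\in\p M$, work in boundary normal coordinates \eqref{eq_boun_2_metric}, and use $u_1=v_0+r_1\in C^\infty(M)$ (as in the proof of Proposition \ref{prop_boundary_A}) together with $u_4=\overline{v_0+r}\in L^2(M)$ (as in Proposition \ref{prop_boundary_V}); by Proposition \ref{prop_density}, the identity \eqref{eq_boun_A_V} extends by density to allow $L^2$ harmonic test functions in the last slot, since all other factors are bounded when $A\in C^{1,1}$, $V\in C^{1,1}$.

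\textbf{Step 1: $A|_{\p M}=0$.} Set $u_2=u_3=1$ in \eqref{eq_boun_A_V} to obtain
\[
4i \int_M \langle A, du_1\rangle_g u_4\, dV_g \;=\; \int_M (3id^*A + V)\, u_1 u_4\, dV_g.
\]
Test with $u_1=v_0+r_1$ and $u_4=\overline{v_0+r}$. Multiplying by $\lambda^{-\alpha(n-1)}$ and letting $\lambda\to 0$, the left-hand side converges to $4i\cdot\frac{i}{2}\langle A(x_0),(\tau',i)\rangle = -2\langle A(x_0),(\tau',i)\rangle$ exactly as in \eqref{eq_boun_38}, while the right-hand side is $\mathcal{O}(\lambda)$ because $3id^*A+V\in L^\infty(M)$ and $\|u_1u_4\|_{L^1(M)}=\mathcal{O}(\lambda^{\alpha(n-1)+1})$. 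Replacing $\tau'$ by $-\tau'$ and combining, exactly as in the last step of the proof of Proposition \ref{prop_boundary_A}, yields $A(x_0)=0$.

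\textbf{Step 2: $(V-id^*A)|_{\p M}=0$.} Now set $u_3=u_4=1$ in \eqref{eq_boun_A_V}. Integration by parts on the first term gives, using $A|_{\p M}=0$ from Step 1 to kill the boundary contribution,
\[
\int_M \langle A, d(u_1u_2)\rangle_g\, dV_g \;=\; \int_M u_1 u_2\, d^*A\, dV_g,
\]
so the identity collapses to
\[
\int_M (id^*A - V)\, u_1 u_2\, dV_g = 0
\]
for all harmonic $u_1,u_2\in C^{2,\alpha}(M)$. The coefficient $id^*A-V$ is continuous (in fact $C^{0,1}$), so the zeroth-order part of the argument in Proposition \ref{prop_boundary_V} (which requires only continuity of the potential) applies with the same test functions to conclude $(id^*A-V)(x_0)=0$, i.e.\ $V(x_0)=id^*A(x_0)$ at every $x_0\in \p M$.

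\textbf{Step 3: $\p_\nu A|_{\p M}=0$.} Return to the identity of Step 1 and extract its next order, multiplying by $\lambda^{-\alpha(n-1)-1}$ and letting $\lambda\to 0$. By the same computation that produced \eqref{eq_boun_43}, the left-hand side converges to $4i\cdot\frac{i}{4}\langle \p_{x_n}A(x_0),(\tau',i)\rangle=-\langle \p_{x_n}A(x_0),(\tau',i)\rangle$; by the first-order computation of Proposition \ref{prop_boundary_V} applied to the continuous coefficient $3id^*A+V$, the right-hand side converges to $\frac{1}{2}(3id^*A(x_0)+V(x_0))=2id^*A(x_0)$, where in the last equality I substituted $V(x_0)=id^*A(x_0)$ from Step 2. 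Since $A|_{\p M}=0$, the tangential derivatives of each component $A_j$ vanish at $x_0$, and in boundary normal coordinates (with $g^{jk}(x_0)=\delta^{jk}$) one reads off from \eqref{eq_int_2} that $d^*A(x_0)=-\p_{x_n}A_n(x_0)$. Thus
\[
\sum_{\alpha=1}^{n-1} \p_{x_n}A_\alpha(x_0)\,\tau'_\alpha + i\,\p_{x_n}A_n(x_0) \;=\; -2i\,\p_{x_n}A_n(x_0),
\]
and replacing $\tau'$ by $-\tau'$ then adding forces $\p_{x_n}A_n(x_0)=0$, while subtracting and using that $\tau'$ is an arbitrary unit tangent vector forces $\p_{x_n}A_\alpha(x_0)=0$ for $\alpha=1,\dots,n-1$. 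Hence $\p_\nu A(x_0)=0$.

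\textbf{Main obstacle.} The most delicate point is the bookkeeping in Step 3: the right-hand side of the Step 1 identity does \emph{not} vanish at order $\lambda^{-\alpha(n-1)-1}$, so one really must use the algebraic relation $V=id^*A$ on $\p M$ from Step 2 together with the identity $d^*A(x_0)=-\p_{x_n}A_n(x_0)$ implied by $A|_{\p M}=0$, in order to turn the apparent obstruction into an equation that still forces $\p_\nu A=0$ after averaging over $\pm\tau'$. The regularity assumption $A,V\in C^{1,1}(M)$ is exactly what is needed to justify the Taylor expansions $A(x',x_n)=\p_{x_n}A(x',0)x_n+\mathcal{O}(x_n^2)$ and the continuity of $d^*A$ and $V$ used at each stage, as in \eqref{eq_boun_26} and \eqref{eq_boun_41_1}.
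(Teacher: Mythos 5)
Your proposal takes a genuinely different and considerably more elaborate route than the paper, and unfortunately it contains a gap in Step 3. The paper's own proof is a short symmetry argument: permuting $u_1\leftrightarrow u_4$ in \eqref{eq_boun_A_V} produces \eqref{eq_boun_49} with the same zeroth-order term $(3id^*A+V)u_1u_2u_3u_4$, so subtracting kills this term entirely and yields the pure first-order identity \eqref{eq_boun_50}; setting $u_3=u_4=1$ then collapses it to $\int_M\langle A, du_1\rangle_g\,u_2\,dV_g=0$, which is exactly the hypothesis of Proposition \ref{prop_boundary_A}. This reduction is clean and requires no new asymptotic analysis. Your Steps 1 and 2 are correct (Step 2, observing that $(id^*A-V)|_{\p M}=0$, is a nice byproduct, though not needed), but by not exploiting the $u_1\leftrightarrow u_4$ symmetry you are forced to re-run the boundary concentration asymptotics against a right-hand side that does not cancel, and this is where the argument breaks down.

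The gap is in the remainder estimates in Step 3. You take $u_1=v_0+r_1$ with $r_1$ the Lax--Milgram solution of the Dirichlet problem, which only satisfies $\|r_1\|_{L^2}\le\|r_1\|_{H^1}=\mathcal{O}\bigl(\lambda^{\frac{\alpha(n-3)}{2}+\frac{1}{2}}\bigr)$, and $u_4=\overline{v_0+r}$ with $r$ from the order-two Carleman construction, $\|r\|_{L^2}=\mathcal{O}\bigl(\lambda^{\frac{\alpha(n-3)}{2}+\frac{3}{2}}\bigr)$. The right-hand side $\int_M(3id^*A+V)u_1u_4\,dV_g$, after multiplication by $\lambda^{-\alpha(n-1)-1}$, contains the cross term
\[
\lambda^{-\alpha(n-1)-1}\int_M(3id^*A+V)\,r_1\,\overline{v_0}\,dV_g
=\mathcal{O}\bigl(\lambda^{-\alpha(n-1)-1}\bigr)\,\|r_1\|_{L^2}\,\|v_0\|_{L^2}
=\mathcal{O}\bigl(\lambda^{-\alpha}\bigr),
\]
which diverges as $\lambda\to 0$. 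Invoking ``the first-order computation of Proposition \ref{prop_boundary_V}'' here is illegitimate, because that computation used the sharper $L^2$ bound on $r$ from the Carleman construction, and the coefficient $3id^*A+V$ does not vanish on $\p M$ (it equals $4id^*A$ there), so one cannot borrow a factor of $x_n$ to improve the product estimate. One could try to rescue this by noting $r_1\in H^1_0$ and invoking a Hardy inequality $\|r_1/d(x)\|_{L^2}\lesssim\|r_1\|_{H^1}$ together with $\|x_n v_0\|_{L^2}=\mathcal{O}\bigl(\lambda^{\frac{\alpha(n-1)}{2}+\frac{3}{2}}\bigr)$, which would improve the bound to $\mathcal{O}(\lambda^{1-\alpha})$; but that is an additional, nontrivial ingredient you neither state nor justify, and none of the paper's lemmas supply it. There is also a minor sign slip at the end of Step 3 (your displayed equation should read $+2i\,\p_{x_n}A_n(x_0)$ on the right, not $-2i$), though that does not affect the conclusion since both signs force $\p_{x_n}A_n(x_0)=0$ after averaging over $\pm\tau'$.
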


\begin{proof}
We also have 
\begin{equation}
\label{eq_boun_49}
\int_M \big(4i \langle A, d(u_2u_3u_4)\rangle_g u_1-\big(3i d^*(A) +V\big) u_1u_2u_3u_4 \big)dV_g=0.
\end{equation}

Subtracting \eqref{eq_boun_49} from \eqref{eq_boun_A_V}, we get 
\begin{equation}
\label{eq_boun_50}
\int_M \langle A, d(u_1u_2u_3)\rangle_g u_4 dV_g- \int_M  \langle A, d(u_2u_3u_4)\rangle_g u_1 dV_g=0.
\end{equation}
Letting $u_3=u_4=1$, \eqref{eq_boun_50} gives 
\begin{equation}
\label{eq_boun_51}
\int_M \langle A, d u_1\rangle_g u_2 dV_g=0,
\end{equation}
for all harmonic functions $u_1,u_2\in C^{2,\alpha}(M)$, and therefore for all  harmonic functions $u_1,u_2\in C^{\infty}(M)$. 
The result follows by an application of Proposition \ref{prop_boundary_A}. 
\end{proof}

When proving Proposition \ref{prop_density_form}, we shall also need the following standard density result. 
\begin{prop}
\label{prop_density_Holder} 
Let $(M,g)$ be smooth compact Riemannian manifold of dimension $n\ge 2$ with smooth boundary. 
The set of harmonic functions in $M^\text{int}$ that are smooth up to the boundary is dense in the space of $C^{2,\alpha}(M)$-harmonic functions, $0<\alpha<1$ , in the $C^{2,\beta}(M)$--topology, for $0<\beta<\alpha$. 
\end{prop}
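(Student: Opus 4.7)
The plan is to reduce the claim to a density statement for boundary data and then invoke boundary Schauder estimates. Concretely, let $u\in C^{2,\alpha}(M)$ be harmonic in $M^{\text{int}}$. Then $f:=u|_{\partial M}\in C^{2,\alpha}(\partial M)$, and $u$ is the unique solution of the Dirichlet problem
\[
-\Delta_g u=0\text{ in }M^{\text{int}},\qquad u|_{\partial M}=f.
\]
The strategy is to approximate $f$ by smooth boundary data and then solve the Dirichlet problem.

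First, I will show that $C^\infty(\partial M)$ is dense in $C^{2,\alpha}(\partial M)$ in the $C^{2,\beta}(\partial M)$ topology for every $0<\beta<\alpha$. This is a standard fact about Hölder spaces: using a partition of unity subordinate to a finite cover of $\partial M$ by coordinate charts and transferring to Euclidean space, one mollifies componentwise. Given $\eta\in C_0^\infty(\mathbb{R}^{n-1})$ a standard mollifier and $f_\varepsilon=f*\eta_\varepsilon$, a direct computation using the defining seminorms for Hölder spaces gives
\[
\|f_\varepsilon-f\|_{C^{2,\beta}}\le C\varepsilon^{\alpha-\beta}\,[f]_{C^{2,\alpha}},
\]
which tends to zero as $\varepsilon\to 0$. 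Thus one can choose $f_j\in C^\infty(\partial M)$ with $\|f_j-f\|_{C^{2,\beta}(\partial M)}\to 0$.

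Next, let $u_j$ be the unique solution of the Dirichlet problem with boundary data $f_j$. By the classical boundary regularity theory for the Laplace--Beltrami operator on a manifold with smooth boundary (applied iteratively), $u_j\in C^\infty(M)$, so each $u_j$ is a harmonic function smooth up to the boundary, as required. The function $w_j=u_j-u$ is harmonic in $M^{\text{int}}$ with boundary data $f_j-f\in C^{2,\beta}(\partial M)$. The global Schauder estimate for the Dirichlet problem, see \cite[Theorem 6.15]{Gil_Tru_book} (formulated there on Euclidean domains and extended to the manifold setting via charts), yields
\[
\|w_j\|_{C^{2,\beta}(M)}\le C\,\|f_j-f\|_{C^{2,\beta}(\partial M)},
\]
with $C$ independent of $j$.

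Combining these two steps, $u_j\to u$ in $C^{2,\beta}(M)$, which establishes the density. The only mildly delicate point is the first step, the Hölder-space mollification estimate, and this is classical; there are no substantive obstacles in the argument.
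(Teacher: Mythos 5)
Your proof is correct and follows essentially the same route as the paper: approximate the boundary trace $f\in C^{2,\alpha}(\partial M)$ by smooth $f_j$ in the $C^{2,\beta}$ topology, solve the Dirichlet problem, and conclude via the global Schauder estimate from \cite[Section 6.3]{Gil_Tru_book}. The only cosmetic difference is that you sketch the mollification argument for the boundary-density step, whereas the paper simply cites \cite[Theorem A.10]{Hormander_1976}.
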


\begin{proof}
The proof follows along the line of the proof of Proposition \ref{prop_density}. Indeed, let $u\in C^{2,\alpha}(M)$ be harmonic in $M^\text{int}$ and let $f=u|_{\p M}\in  C^{2,\alpha}(\p M)$. Let $0<\beta<\alpha$ and by density, there exists $f_j\in C^\infty(\p M)$ such that $\|f_j-f\|_{C^{2,\beta}(\p M)}\to 0$, as $j\to \infty$,  see \cite[Theorem A. 10]{Hormander_1976}. The Dirichlet problem,
\[
\begin{cases}-\Delta_g u_j=0& \text{in}\quad M^{\text{int}},\\
u_j|_{\p M}=f_j,
\end{cases}
\]
has a unique solution $u_j\in C^{2,\alpha}(M)$, and by elliptic regularity, $u_j\in C^\infty(M)$.  Using that $C^{2,\alpha}(M)\subset C^{2,\beta}(M)$, and the following bound for the solution to the Dirichlet problem for the Laplacian, see \cite[Section 6.3, p. 109]{Gil_Tru_book},
\[
\|u_j-u\|_{C^{2,\beta}(M)}\le C\|f_j-f\|_{C^{2,\beta}(\p M)}\to 0,
\]
we get the claim. 
\end{proof}

\section{Some facts about non-tangential geodesics}

\label{app_geodesics}

When proving Proposition \ref{prop_density_form}, in order to avoid the use of stationary and non-stationary phase arguments on the boundary of the manifold, we shall need the following result concerning non-tangential geodesics which was kindly proven for us by Gabriel Paternain.

\begin{prop}
\label{prop_geodesic} Let $(M_0, g_0)$ be a smooth compact Riemannian manifold of dimension $n\ge 2$ with  smooth boundary, and let $\gamma$ be a unit speed non-tangential geodesic on $M_0$  between boundary points. Then for each point $y_0=\gamma(t_0)\in M_0^{\text{int}}$, except finitely many, that exists a small neighborhood $W\subset S_{y_0}M_0=\{w\in T_{y_0}M_0: |w|_g=1\}$  of $w_0=\dot{\gamma}(t_0)$ such that for every $w\in W$, $w \neq w_0$,  the unit speed geodesic $\eta$ on $M_0$ passing through  $(y_0, w)$  is also non-tangential between boundary points, and $\gamma$ and $\eta$ do not  intersect each other at the boundary of $M_0$. Furthermore,  $\gamma$ and $\eta$ are distinct and are not reverses of each other.
\end{prop}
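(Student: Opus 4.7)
The plan is to embed $(M_0,g_0)$ in a slightly larger smooth Riemannian manifold $(N, g_0)$ of the same dimension without boundary, fix a boundary defining function $\rho\in C^\infty(N)$ with $\rho>0$ on $M_0^{\text{int}}$, $\rho=0$ on $\p M_0$, and $d\rho\ne 0$ along $\p M_0$, and analyze the family of unit-speed geodesics $\eta_w(s)=\exp_{y_0}(sw)$ for $w$ in a small neighborhood of $w_0=\dot\gamma(t_0)$ in $S_{y_0}M_0$. The proof will combine three ingredients: the implicit function theorem forces the perturbed geodesics $\eta_w$ to remain non-tangential with smoothly varying exit times; a Jacobi-field computation identifies the differential of the exit-point map and shows it is a local diffeomorphism except when the endpoint is conjugate to $y_0$ along $\gamma$; and the classical discreteness of conjugate points along a smooth geodesic arc restricts the bad values of $t_0$ to a finite set.

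For the first step, I would set $h(s, w)=\rho(\eta_w(s))$. Since $\gamma$ is non-tangential, $\p_s h(\cdot, w_0)$ is nonzero at $s=-(S_1+t_0)$ and $s=S_2-t_0$, while $h(\cdot, w_0)$ is strictly positive on the open interval in between. The implicit function theorem then produces smooth exit times $T_1(w), T_2(w)$ with $T_1(w_0)=S_1+t_0$, $T_2(w_0)=S_2-t_0$ and $h(-T_1(w),w)=h(T_2(w),w)=0$; transversality $d\rho\cdot \dot\eta_w\ne 0$ at these times persists under small perturbations, so $\eta_w$ exits non-tangentially. Uniform continuity applied to the strict positivity of $\rho\circ\gamma$ on $[-S_1+\varepsilon, S_2-\varepsilon]$ then ensures $\eta_w(s)\in M_0^{\text{int}}$ for all $s\in(-T_1(w), T_2(w))$ once $W$ is sufficiently small, so that $\eta_w$ is a non-tangential geodesic in $M_0$ between boundary points.

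For the second step, I would analyze the endpoint maps $\mathcal{F}_+: W\to \p M_0$, $\mathcal{F}_+(w)=\eta_w(T_2(w))$, and similarly $\mathcal{F}_-(w)=\eta_w(-T_1(w))$; both are smooth maps between $(n-1)$-dimensional manifolds. The standard geodesic-variation formula together with the chain rule yields, for $\xi\in T_{w_0}S_{y_0}M_0\cong w_0^\perp$,
\[
d\mathcal{F}_+|_{w_0}(\xi)\;=\;J_\xi(S_2)+\bigl(dT_2|_{w_0}(\xi)\bigr)\,\dot\gamma(S_2),
\]
where $J_\xi$ is the Jacobi field along $\gamma$ with $J_\xi(t_0)=0$ and $\nabla_t J_\xi(t_0)=\xi$. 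Since $\dot\gamma(S_2)$ is transverse to $T_{\gamma(S_2)}\p M_0$, its contribution is absorbed by the derivative $dT_2|_{w_0}$ (which is uniquely determined by differentiation of $\rho\circ\mathcal{F}_+\equiv 0$), and $d\mathcal{F}_+|_{w_0}: w_0^\perp\to T_{\gamma(S_2)}\p M_0$ is an isomorphism precisely when $\xi\mapsto J_\xi(S_2)$ is injective on $w_0^\perp$, which is exactly the condition that $\gamma(S_2)$ is not conjugate to $\gamma(t_0)$ along $\gamma$. By the classical fact that conjugate points along a compact geodesic arc form a finite set, only finitely many $t_0\in(-S_1, S_2)$ fail this condition; the analogous analysis of $\mathcal{F}_-$ excludes finitely many more.

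Away from this finite exceptional set, both $\mathcal{F}_\pm$ are local diffeomorphisms at $w_0$, so shrinking $W$ produces $\mathcal{F}_+^{-1}(\gamma(S_2))\cap W=\{w_0\}$ and $\mathcal{F}_-^{-1}(\gamma(-S_1))\cap W=\{w_0\}$; a further shrinkage and continuity (with the same local-diffeomorphism reasoning applied to the common endpoint if $\gamma(-S_1)=\gamma(S_2)$) ensure $\mathcal{F}_+(w)$ avoids $\gamma(-S_1)$ and $\mathcal{F}_-(w)$ avoids $\gamma(S_2)$ for all $w\in W\setminus\{w_0\}$. Since $w_0\ne -w_0$ in $S_{y_0}M_0$, further shrinking $W$ to exclude $-w_0$ together with uniqueness of geodesics with prescribed initial data guarantees $\eta_w$ is neither a reparametrization of $\gamma$ nor of its reverse. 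The main obstacle will be the Jacobi-field step: correctly identifying $d\mathcal{F}_+|_{w_0}$ modulo the tangential component absorbed by $dT_2$, and invoking classical discreteness of conjugate points along a smooth (not necessarily analytic) geodesic arc to conclude that the exceptional set of $t_0$'s is indeed finite.
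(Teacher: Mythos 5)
Your proposal is correct and follows essentially the same strategy as the paper's proof: smooth exit times via the implicit function theorem, the Jacobi-field identification of the differential of the boundary endpoint map (using orthogonality of the Jacobi field to $\dot\gamma$ to reduce injectivity to the non-conjugacy condition), finiteness of conjugate points along a compact geodesic arc, and shrinking $W$ to keep the endpoints disjoint. The only cosmetic differences are that you phrase the exit-time and endpoint maps through a boundary defining function rather than the abstract geodesic-flow and horizontal/vertical splitting used in the paper, and you obtain distinctness of $\gamma$ and $\eta$ from endpoint disjointness rather than invoking the self-intersection lemma of Kenig--Salo; both variants are fine.
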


\begin{proof}
Let us first notice that the property of a geodesic being non-tangential is stable under small perturbations of the initial conditions, in view of the $C^{\infty}$--dependence of the geodesic flow on the initial conditions. Let $y_0=\gamma(t_0)\in M_0^{\text{int}}$. Reparametrizing the geodesic $\gamma$ if necessary, we may assume that $\gamma:[-S_1,S_2]\to M_0$, $0<S_1,S_2<\infty$, is such that $\gamma(0)=y_0$ and $\dot{\gamma}(0)=w_0$.  Let us consider the map 
\begin{equation}
\label{eq_0_D_1}
F_{y_0}:\text{neigh}(w_0, S_{y_0}M_0)\to \text{neigh}(\gamma(S_2), \p M_0), \quad F_{y_0}(w)=\pi(\varphi_{\tau(y_0,w)}(y_0, w)),
\end{equation}
where $\tau(y_0,w)$ is the exit time of the geodesic $\gamma_{y_0,w}$ through $(y_0,w)$, $ \varphi_t: SM_0\to SM_0$, $t\in \R$, is the geodesic flow, given by 
\begin{equation}
\label{eq_0_D_2}
\varphi_t(y,w)=(\gamma_{y,w}(t), \dot{\gamma}_{y,w}(t)),
\end{equation}
and $ \pi:  SM_0 \to M_0$, $\pi(y,w)=y$ is the canonical projection. 

The exit time $\tau(y_0,w)$ depends smoothly on $w$, in view of the implicit function theorem and the fact that the geodesic $\gamma$ is non-tangential. The map $F_{y_0}$ is therefore smooth, and we have $F_{y_0}(w_0)=\gamma(S_2)$. 

Let us now compute the differential of $F_{y_0}$ at $w_0$ acting on  a vector $\eta\in T_{w_0}S_{y_0}M_0$. To that end, consider a curve $w:(-a,a)\to S_{y_0}M_0$ such that $w(0)=w_0$ and $\dot{w}(0)=\eta$, and by the chain rule, we get 
\begin{equation}
\label{eq_0_D_3}
\begin{aligned}
F'_{y_0}(w_0)\eta=\frac{d}{ds}\bigg|_{s=0} F_{y_0}(w(s))= \frac{d}{ds}\bigg|_{s=0} \pi(\varphi_{\tau(y_0,w(s))}(y_0, w(s)))\\
=d\pi(\varphi_{\tau(y_0,w_0)}(y_0, w_0))\bigg(\frac{d}{dt}\bigg|_{t=\tau(y_0,w_0)}\varphi_t(y_0,w_0)\frac{\p \tau}{\p w}(y_0,w_0)\cdot \eta \\
+ \frac{\p \varphi_{\tau(y_0,w_0)}}{\p w}(y_0,w_0)\eta  \bigg).
\end{aligned}
\end{equation}
To proceed, we recall some facts about the geometry of the tangent bundle following \cite{Paternain_book}. First letting 
\[
V(y,w)=\ker (d\pi (y,w))\subset T_{(y,w)}SM_0
\]
be the vertical fiber of $TSM_0$ at $(y,w)$, see \cite[Section 1.3.1]{Paternain_book} we have the splitting 
\[
T_{(y,w)}SM_0=H(y,w)\oplus V(y,w),
\]
where $H(y,w)$ is the horizontal fiber of $TSM_0$ at $(y,w)$, see \cite[Section 1.3, p. 13]{Paternain_book}.
Both $V(y,w)$ and $H(y,w)$ can be identified with $S_yM_0$, and for $\xi\in T_{(y,w)}SM_0$, we write $\xi=(\xi^h,\xi^v)$, where $\xi^h,\xi^v\in S_yM_0$ are the corresponding horizontal and vertical parts of $\xi$. Let  $X:SM_0\to TSM_0$ be the geodesic vector field given by 
\begin{equation}
\label{eq_0_D_4}
X(\varphi_t(y,w))=\frac{d}{dt}\varphi_t(y,w).
\end{equation}
It follows from  \cite[Section 1.3, p. 13]{Paternain_book} that
we have 
\begin{equation}
\label{eq_0_D_5}
X(y,w)=(w,0). 
\end{equation}
Now in view of the above splitting, we have $(0,\eta)\in V(y_0,w_0)$, and therefore, we get 
\begin{equation}
\label{eq_0_D_6}
\frac{\p \varphi_{\tau(y_0,w_0)}}{\p w}(y_0,w_0)\eta=d \varphi_{\tau(y_0,w_0)} (y_0,w_0)(0,\eta).
\end{equation}
Using that $\tau(y_0,w_0)=S_2$, and \eqref{eq_0_D_2}, \eqref{eq_0_D_4}, \eqref{eq_0_D_5}, \eqref{eq_0_D_6}, we obtain from \eqref{eq_0_D_3} that 
\begin{equation}
\label{eq_0_D_7}
\begin{aligned}
F'_{y_0}(w_0)\eta=&d\pi(\gamma(S_2), \dot{\gamma}(S_2))\bigg(X(\gamma(S_2), \dot{\gamma}(S_2))\frac{\p \tau}{\p w}(y_0,w_0)\cdot \eta \\
&+ d \varphi_{\tau(y_0,w_0)} (y_0,w_0)(0,\eta)\bigg)\\
=&\dot{\gamma}(S_2)\frac{\p \tau}{\p w}(y_0,w_0)\cdot \eta + d\pi(\gamma(S_2), \dot{\gamma}(S_2))(d \varphi_{S_2} (y_0,w_0)(0,\eta)).
\end{aligned}
\end{equation}
Now by \cite[Lemma 1.40]{Paternain_book}, see also \cite[Theorem 11.2]{Ilmavirta_notes}, we get for the differential of the geodesic flow that 
\begin{equation}
\label{eq_0_D_8}
d \varphi_{S_2} (y_0,w_0)(0,\eta)=(J_{(0,\eta)}(S_2), \dot{J}_{(0,\eta)}(S_2)),
\end{equation}
where $J_{(0,\eta)}$ is the Jacobi field along the geodesic $t\mapsto \pi(\varphi_t(y_0,w_0))=\gamma(t)$ with the initial conditions
\begin{equation}
\label{eq_0_D_9}
J_{(0,\eta)}(0)=0,\quad \dot{J}_{(0,\eta)}=\eta. 
\end{equation}
Using \cite[Exercise 5.9]{Ilmavirta_notes}, \eqref{eq_0_D_9}, and the fact that $\eta\in T_{w_0}S_{y_0}M_0$,  we have 
\begin{equation}
\label{eq_0_D_10}
\langle \dot\gamma(S_2), J_{(0,\eta)}(S_2)\rangle = \langle \dot\gamma(0), J_{(0,\eta)}(0)\rangle +S_2 \langle \dot\gamma(0),\dot J_{(0,\eta)}(0)\rangle = S_2\langle w_0, \eta\rangle =0,
\end{equation}
showing that the Jacobi field $J_{(0,\eta)}$ is normal to $\gamma$. It follows from \eqref{eq_0_D_7} and \eqref{eq_0_D_8} that 
\begin{equation}
\label{eq_0_D_11}
F'_{y_0}(w_0)\eta= \dot{\gamma}(S_2)\frac{\p \tau}{\p w}(y_0,w_0)\cdot \eta +J_{(0,\eta)}(S_2). 
\end{equation}
Using  \eqref{eq_0_D_11} and the orthogonally \eqref{eq_0_D_10}, we see that  if $F'_{y_0}(w_0)$ has a non-trivial kernel, then there exists $\eta\ne 0$ such  $J_{(0,\eta)}(S_2)=0$, and therefore, the points $y_0$ and $\gamma(S_2)$ are conjugate points along $\gamma$, see \cite[Definition 7.3]{Ilmavirta_notes}. Thus, $F'_{y_0}(w_0)$ is bijective as long as $y_0$ is not a conjugate point to $\gamma(S_2)$  along $\gamma$. 

By the inverse function theorem,  $F_{y_0}$ is a local diffeomorphism if $y_0$ is not a conjugate point to $\gamma(S_2)$  along $\gamma$.  

Hence,  if $y_0$ is not a conjugate point to $\gamma(S_2)$ and $\gamma(-S_1)$  along $\gamma$, there exists a 
small neighborhood $W\subset S_{y_0}M_0$ of $w_0$ such that for every $w\in W$, $w \neq w_0$,  the unit speed geodesic $\eta:[-T_1,T_2]\to M_0$, $0<T_1,T_2<\infty$, such that $\eta(0)=y_0$ and $\dot{\eta}(0)=w$  is also non-tangential between boundary points, and $\gamma$ and $\eta$ do not  intersect each other at the boundary of $M_0$. Using the fact that $\gamma$ can only self-intersect at $y_0$ finitely many times, see \cite[Lemma 7.2]{Kenig_Salo_APDE_2013}, by choosing $W$ sufficiently small so that the corresponding finitely many tangent vectors of $\gamma$ and their negatives do not belong to $W$, we achieve that the geodesic $\eta$   and $\gamma$ are distinct and are not reverses of each other.  

To conclude the proof, we recall from  \cite[page 248]{do_Carmo} that $\{p\in \gamma([-S_1,S_2]): p \text{ is conjugate to }\gamma(-S_1) \text{ or }\gamma(S_2)\}$ is discrete, and since $M_0$ is compact, it is finite.  This completes the proof of the claim. 
\end{proof}

When proving Proposition \ref{prop_density_form} in the simplified setting, we shall need some basic facts about non-tangential geodesics. These facts are known, see \cite[Section 3]{DKuLLS_2018}, and are presented here for completeness and convenience of the reader. 

\begin{prop} 
\label{prop_D_geodesics}
Let $(M_0,g_0)$ be a smooth compact Riemannian manifold of dimension $n\ge 2$ with smooth boundary. 
\begin{itemize}
\item[(i)] Let $\gamma$ be a unit speed non-self-intersecting non-tangential geodesic on $M_0$, and let $y_0=\gamma(t_0)\in M_0^{\text{int}}$. Then there exists a small neighborhood $W$ of $w_0=\dot{\gamma}(t_0)$ in $S_{y_0}M_0$ such that for every $w\in W$, the unit speed geodesic $\gamma_{y_0,w}$ passing through  $(y_0, w)$ is non-tangential between boundary points and does not have self-intesections.

\item[(ii)] Let $\gamma$ and $\eta$ be unit speed non-self-intersecting non-tangential geodesics on $M_0$ with the only point of intersection $y_0=\gamma(t_0)=\eta(s_0)\in M_0^{\text{int}}$. Then there exists a small neighborhood $W$ of $w_0=\dot{\gamma}(t_0)$ in $S_{y_0}M_0$ such that for every $w\in W$, the unit speed geodesic $\gamma_{y_0,w}$ passing through  $(y_0,w)$ is non-tangential between boundary points, does not have self-intesections,  and intersects $\eta$ at the point $y_0$ only.

\end{itemize}

\end{prop}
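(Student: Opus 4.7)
The plan is to combine continuous dependence of the geodesic flow on initial conditions (as exploited in the proof of Proposition~\ref{prop_geodesic}) with compactness of $M_0$ and, for part (ii), a local transversality argument at $y_0$. Non-tangentiality of $\gamma_{y_0,w}$ for $w$ near $w_0$ will be immediate in both cases: the exit times $\tau_{\pm}(w)$ depend smoothly on $w$ by the implicit function theorem applied to the boundary-hitting equation, and the exit velocities $\dot{\gamma}_{y_0,w}(\tau_{\pm}(w))$ depend continuously on $w$, so non-tangentiality of $\gamma_{y_0,w_0}$ at its endpoints persists under small perturbations; in particular, the parameter interval of $\gamma_{y_0,w}$ is uniformly bounded for $w$ in a neighborhood of $w_0$.

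For the non-self-intersection claim in (i), I will argue by contradiction. Suppose there exist $w_n\to w_0$ and $a_n<b_n$ in the (uniformly bounded) parameter interval of $\gamma_{y_0,w_n}$ with $\gamma_{y_0,w_n}(a_n)=\gamma_{y_0,w_n}(b_n)$. Passing to a subsequence, $a_n\to a$ and $b_n\to b$ with $a\le b$; continuous dependence then gives $\gamma_{y_0,w_0}(a)=\gamma_{y_0,w_0}(b)$, whence $a=b$ by the hypothesis that $\gamma$ is non-self-intersecting. To exclude the remaining case $b_n-a_n\to 0$, I will exploit that, in normal coordinates near $\gamma_{y_0,w_0}(a)$, the expansion $\gamma_{y_0,w}(a')-\gamma_{y_0,w}(b')=(a'-b')\,\dot\gamma_{y_0,w}(a')+O((a'-b')^2)$ holds uniformly in $w$ close to $w_0$, and $|\dot\gamma_{y_0,w}|_g=1$; hence the left-hand side is nonzero when $a'\neq b'$ are close to a common limit, contradicting $\gamma_{y_0,w_n}(a_n)=\gamma_{y_0,w_n}(b_n)$ for large $n$.

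For (ii), I must additionally exclude intersections of $\gamma_{y_0,w}$ with $\eta$ other than at $y_0$. The argument splits into a local part near $y_0$ and a global compactness part. Locally, I introduce the smooth map $\Psi(t,s,w)=\gamma_{y_0,w}(t)-\eta(s)$ in normal coordinates near $y_0$; it vanishes at $(0,s_0,w)$ for every $w$, and its $(t,s)$-Jacobian at $(0,s_0,w_0)$ is the $n\times 2$ matrix $[w_0,\,-\dot\eta(s_0)]$. Since $\gamma$ and $\eta$ meet only at $y_0$, they cannot coincide as parametrized curves, nor be reverses of one another (either case would produce infinitely many intersections), so $w_0$ and $\dot\eta(s_0)$ are linearly independent and the Jacobian has full rank~$2$. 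Applying the implicit function theorem to a suitably chosen $2\times 2$ subsystem of $\Psi=0$ will yield neighborhoods $U$ of $(0,s_0)$ and $W_1$ of $w_0$ such that, for every $w\in W_1$, the only zero of $\Psi(\cdot,\cdot,w)$ in $U$ is $(0,s_0)$.

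A compactness argument then rules out intersections outside $U$: if some sequence $w_n\to w_0$ admitted $(t_n,s_n)\notin U$ with $\gamma_{y_0,w_n}(t_n)=\eta(s_n)$, extracting a convergent subsequence would produce a limit $(t^*,s^*)\notin U$ satisfying $\gamma_{y_0,w_0}(t^*)=\eta(s^*)$, forcing $(t^*,s^*)=(0,s_0)\in U$ by the assumption that $y_0$ is the only intersection of $\gamma$ and $\eta$, a contradiction. Intersecting the resulting neighborhood with $W_1$ delivers the desired $W$. The main technical obstacle will be the local analysis in (ii), where the transversality of $w_0$ and $\dot\eta(s_0)$ is the crucial input to the implicit function theorem; the remaining arguments reduce to fairly standard compactness limits and smooth dependence of geodesics on their initial data.
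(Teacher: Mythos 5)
Your proof is correct, and for both parts it follows the same overall contradiction-plus-compactness skeleton as the paper, but replaces the paper's key local ingredient with a different one. Concretely, the paper rules out the degenerate limiting cases (coalescing self-intersection parameters in (i), and coalescing intersection parameters with $\eta$ in (ii)) by extending $M_0$ to a closed manifold and invoking the positive injectivity radius $\mathrm{Inj}(M_0)>0$: in (i) a non-trivial geodesic loop at an interior point $a_k$ has length $\ge 2\,\mathrm{Inj}(M_0)$, forcing $s_k-t_k$ away from $0$; in (ii) two distinct geodesics through $y_0$ are radial lines in a normal ball, so they cannot meet again inside that ball. You instead argue directly in local coordinates: in (i) via the uniform Taylor expansion $\gamma_{y_0,w}(a')-\gamma_{y_0,w}(b')=(a'-b')\dot\gamma_{y_0,w}(a')+O((a'-b')^2)$ and unit speed, and in (ii) via the implicit function theorem applied to a rank-$2$ subsystem of $\Psi(t,s,w)=\gamma_{y_0,w}(t)-\eta(s)$, exploiting the transversality $w_0\not\parallel\dot\eta(s_0)$ guaranteed by the single-intersection hypothesis. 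The two viewpoints are morally equivalent (the injectivity-radius bound is ultimately a packaged form of the normal-coordinate uniqueness you use), but yours is more explicit and self-contained at the cost of tracking uniformity of Taylor constants, while the paper's is shorter since it simply quotes $\mathrm{Inj}(M_0)>0$. One small point worth making explicit in your write-up, which the paper does state: the limit point $\gamma_{y_0,w_0}(a)$ (resp.\ the intersection point in (ii)) is interior, so the normal-coordinate and IFT analyses really do take place in the interior of the (possibly extended) manifold; this follows because non-tangential geodesics touch $\partial M_0$ only at their endpoints, and for $w$ near $w_0$ those endpoints stay distinct.
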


\begin{proof}
Here we follow  \cite[Section 3]{DKuLLS_2018}. Let us prove (i).  Reparametrizing the geodesic $\gamma$ if necessary, we may assume that $\gamma:[-S_1,S_2]\to M_0$, $0<S_1,S_2<\infty$, is such that $\gamma(0)=y_0$ and $\dot{\gamma}(0)=w_0$. First the property of a geodesic being non-tangential is stable under small perturbations of the initial conditions, in view of $C^\infty$--dependence of the geodesic flow on the initial conditions.  Assume the contrary: there is a sequence $w_k\to w_0$ in  $S_{y_0}M_0$ as $k\to \infty$ such that there are times $t_k<s_k$ when the corresponding geodesic $\gamma_{y_0,w_k}:[-S_1(k),S_2(k)]\to M_0$ with $\gamma_{y_0,w_k}(0)=y_0$, $\dot{\gamma}_{y_0,w_k}(0)=w_k$, intersects itself, 
\begin{equation}
\label{eq_D_1}
a_k:=\gamma_{y_0,w_k}(t_k)=\gamma_{y_0,w_k}(s_k). 
\end{equation}
Note that the sequences $-S_1(k)\to -S_1$ and $S_2(k)\to S_2$ as $k\to \infty$. Therefore, 
the sequences $t_k, s_k$ are bounded, and passing to subsequences, we may assume that $t_k\to t_0$ and $s_k\to s_0$. Letting $k\to \infty$ in \eqref{eq_D_1}, we get $\gamma(t_0)=\gamma(s_0)$.
Since $\gamma$ does not have self-intersections we obtain that $t_0=s_0$. 

As all geodesics $\gamma_{y_0,w_k}$ are non-tangential,  it follows from \eqref{eq_D_1} that $a_k\in M_0^{\text{int}}$. As $M_0$ is compact, it has a positive injectivity radius $\text{Inj}(M_0)>0$. Here we have extended $M_0$ to a closed
manifold to speak about the injectivity radius and the boundary will not cause any problems as $a_k\in M_0^{\text{int}}$.  Now \eqref{eq_D_1} implies that 
\[
s_k\ge t_k+2 \text{Inj}(M_0),
\]
and therefore, $s_0-t_0\ge 2 \text{Inj}(M_0)>0$, which is a contradiction. Hence, (i) follows. 

To prove (ii), first reparametrizing the geodesics $\gamma$ and $\eta$ if necessary, we may assume that $\gamma:[-S_1,S_2]\to M_0$, $0<S_1,S_2<\infty$, is such that $\gamma(0)=y_0$ and $\dot{\gamma}(0)=w_0$, and $\eta:[-T_1,T_2]\to M_0$, $0<T_1,T_2<\infty$, is such that $\eta(0)=y_0$.  By (i), there exists a small neighborhood $W$ of $w_0$ in $S_{y_0}M_0$ such that for every $w\in W$, the unit speed geodesic $\gamma_{y_0,w}$ such that $\gamma_{y_0,w}(0)=y_0$ and $\dot{\gamma}_{y_0,w}(0)=w$ is non-tangential between boundary points and does not have self-intesections.  We shall show that the neighborhood $W$ can be made smaller so that every $\gamma_{y_0,w}$  intersects $\eta$ at the point $y_0$ only. Let us assume the opposite:  there is a sequence $w_k\to w_0$ in  $S_{y_0}M_0$ as $k\to \infty$ such that there are times $t_k\ne 0$, $s_k\ne 0$ when the corresponding geodesic $\gamma_{y_0,w_k}$ intersects $\eta$, 
\begin{equation}
\label{eq_D_2}
\gamma_{y_0,w_k}(t_k)=\eta(s_k).
\end{equation}
Note that here we used that  $\gamma_{y_0,w_k}$ and $\eta$ do not have self-intersections. We also have 
\begin{equation}
\label{eq_D_3}
\gamma_{y_0,w_k}(0)=\eta(0)=y_0.
\end{equation}
Passing to subsequences, we have that $t_k\to  t_0$ and $s_k\to s_0$. Thus, it follows from \eqref{eq_D_2} that 
$\gamma( t_0)=\eta(s_0)$,
and therefore,  as $\gamma$ and $\eta$ do not  self-intersect and $y_0$ is the only point of their intersection, we get $t_0=s_0=0$. In view of \eqref{eq_D_2} we have $\gamma_{y_0,w_k}(t_k)=\eta(s_k)\to \eta(0)=y_0\in M_0^{\text{int}}$, and thus, for $k$ sufficiently large, $\gamma_{y_0,w_k}(t_k)=\eta(s_k)\in M_0^{\text{int}}$. This together with 
\eqref{eq_D_3} gives $|t_k|>\text{Inj}(M_0)>0$ and $|s_k|>\text{Inj}(M_0)>0$ for $k$ sufficiently large, otherwise  the geodesics $\gamma_{y_0,w_k}$ and $\eta$ 
would intersect at a geodesic ball centered at  $y_0$, which is a contradiction.  Thus, (ii) follows. 
\end{proof}

\end{appendix}

\section*{Acknowledgements}
K.K. is deeply grateful to Gabriel Paternain for the generous help with the proof of Proposition \ref{prop_geodesic}. K.K. would also like to thank Ali Feizmohammadi and Lauri Oksanen for helpful discussions, and Giovanni Alessandrini for pointing out the reference \cite{Aless_deHoop_Gaburro_Sincich_2018}. The research of K.K. is partially supported by the National Science Foundation (DMS 1815922). The research of G.U. is partially supported by NSF, a Walker Professorship at UW and a Si-Yuan Professorship at IAS, HKUST. Part of the work was supported by the NSF grant DMS-1440140 while K.K. and G.U. were in residence at MSRI in Berkeley, California, during Fall 2019 semester.

\end{document}